\newcommand{\argmin}{\mathop{\rm argmin}\limits}
\newcommand{\dom}{\mathop{\rm dom}}
\newcommand{\interior}{\mathop{\rm int}}
\newcommand{\convhull}{\mathop{\rm conv}}
\newcommand{\prox}{\mathrm{prox}}
\newcommand{\dist}{\mathrm{dist}}
\newcommand{\grad}{\mathrm{grad}}
\newcommand{\tr}{\mathrm{tr}}
\newcommand{\EE}{\mathbb{E}}
\newcommand{\RR}{\mathbb{R}}
\newcommand{\MM}{\mathcal{M}}
\newcommand{\OO}{\mathcal{O}}
\newcommand{\CC}{\mathcal{C}}
\renewcommand{\SS}{\mathcal{S}}
\newcommand{\innerprod}[2]{\langle #1,#2 \rangle}
\newtheorem{theorem}{Theorem}[section]
\newtheorem{corollary}{Corollary}[section]
\newtheorem{lemma}{Lemma}[section]
\newtheorem{remark}{Remark}[section]
\newtheorem{assumption}{Assumption}[section]
\providecommand{\keywords}[1]
{
  \small	
  \textbf{\textit{Keywords---}} #1
}
\title{Simple linesearch-free first-order methods for nonconvex optimization}
\author{Shotaro Yagishita\thanks{Risk Analysis Research Center, The Institute of Statistical Mathematics, Japan, E-mail: syagi@ism.ac.jp} \thanks{Center for Social Data Structuring, Joint Support-Center for Data Science Research, Japan}
\and
Masaru Ito\thanks{Department of Mathematics, College of Science and Technology, Nihon University, Japan. E-mail: ito.masaru@nihon-u.ac.jp}}
\date{\today}
\begin{document}

\maketitle

\begin{abstract}
This paper presents an auto-conditioned proximal gradient method for nonconvex optimization.
The method determines the stepsize using an estimation of local curvature and does not require any prior knowledge of problem parameters and any linesearch procedures.
Its convergence analysis is carried out in a simple manner without assuming the convexity, unlike previous studies.
We also provide convergence analysis in the presence of the Kurdyka--\L ojasiewicz property, adaptivity to the weak smoothness, and the extension to the Bregman proximal gradient method.
Furthermore, the auto-conditioned stepsize strategy is also applied to the conditional gradient (Frank--Wolfe) method and the Riemannian gradient method.
\end{abstract}

\keywords{linesearch-free method; auto-conditioned stepsize; proximal gradient method; conditional gradient method (Frank--Wolfe method); Riemannian gradient method; Kurdyka--\L ojasiewicz property; nonconvex nonsmooth optimization}

\section{Introduction}\label{sec:intro}
In this paper, we consider a nonconvex optimization problem minimizing the sum $f(x)+g(x)$ of a smooth function $f$ and a nonsmooth function $g$ over a finite dimensional Euclidean space.
First-order methods such as the proximal gradient \cite{fukushima1981generalized,passty1979ergodic,lions1979splitting} or the conditional gradient (Frank--Wolfe) \cite{frank1956algorithm,levitin1966constrained,mine1981minimization,bredies2009generalized} methods are active research interests in nonconvex/convex optimization which have wide applications in machine learning, statistics, and signal processing, see, e.g., \cite{parikh2014proximal,beck2017first,braun2022conditional}.

The parameter tuning, in particular, the stepsize selection, is a central issue for the implementation affecting the performance of first-order methods. To ensure an ideal convergence property, the stepsize selection typically requires the problem-dependent knowledge such as the Lipschitz constant of $\nabla f$. The backtracking linesearch is a widely used approach to automatically estimate the Lipschitz constant (see, e.g., \cite{bertsekas2016nonlinear,beck2017first}). A bottleneck of the backtracking scheme is multiple evaluations of the objective or subproblems by retrying the iteration in order to ensure a successful estimate of a Lipschitz constant.

The stepsize choice proposed by \citet{malitsky2020gradient} lead research attentions to the ``linesearch-free'' scheme in  gradient methods \cite{malitsky2024adaptive,latafat2024proximal,oikonomidis24adaptive,ou2025bregman,ansarionnestam2025riemannian,li2025simple,lan2024projected,Hoai2024nonconvex,Hoai2025proximal}.
These methods exploit the Lipschitz constant estimate, such as $\frac{\|\nabla f(x^k)-\nabla f(x^{k-1})\|}{\|x^k-x^{k-1}\|}$ using the past test points, to update the stepsizes without retrying the iteration in contrast to backtracking.
This kind of strategy of the stepsize choice is also referred to as ``auto-conditioning'' \cite{lan2024projected,li2025simple}.
Importantly, the linesearch-free methods in the literature possess adaptive convergence behavior in theory and remarkable numerical performance in practice.

The concept of the auto-conditioning strategy originates from the pioneer works by Malitsky \cite{malitsky2015variational,malitsky2020golden} for variational inequalities, which was later applied to the steepest descent method for unconstrained smooth convex optimization by \citet{malitsky2020gradient}. This lead further extensions to proximal gradient \cite{malitsky2024adaptive,latafat2024proximal,oikonomidis24adaptive}, Bregman proximal gradient \cite{ou2025bregman}, and Riemannian gradient methods \cite{ansarionnestam2025riemannian} in smooth convex optimization. Interestingly, \citet{oikonomidis24adaptive} showed that the linesearch-free method \cite{latafat2024proximal} is ``universal'' in the sense that it enjoys adaptive convergence rates not only for Lipschitz continuity but also for H\"older continuity of $\nabla f$.
Linesearch-free proximal gradient methods with optimal complexity were also established by \citet{li2025simple}  under H\"older continuity of $\nabla f$.
Note that the aforementioned linesearch-free gradient methods assume the convexity of the objective function. The development of linesearch-free methods with weakly convex $f$ was recently addressed in \cite{lan2024projected,Hoai2024nonconvex,Hoai2025proximal}.

In this paper, we provide simple convergence analyses of linesearch-free first-order methods for nonconvex optimization problems.
Our linesearch-free first-order methods is based on the auto-conditioned stepsize strategy proposed by \citet{lan2024projected}.

We first propose an auto-conditioned proximal gradient method (AC-PGM) for composite problems whose objective function is the sum of a smooth function and a nonsmooth function, and provide its convergence analyses under the Lipschitz continuity of the gradient of the smooth term.
The AC-PGM does not require any prior knowledge of problem parameters and any linesearch procedures.
Existing works \cite{lan2024projected,Hoai2024nonconvex,Hoai2025proximal} imposed the convexity on the nonsmooth term and several conditions, which does not required in this paper.
We further establish a convergence result in the presence of the Kurdyka--\L ojasiewicz (KL) property for the AC-PGM.
To the best of our knowledge, this is the first result under the KL property for the linesearch-free first-order methods.
Surprisingly, it is also shown that the AC-PGM is adaptive to weak smoothness, namely, even if the Lipschitz continuity of the gradient is replaced by the H\"older continuity, the method still achieves a convergence rate adapted to the corresponding H\"older exponent.
Such analyses had been conducted by \citet{oikonomidis24adaptive} and \citet{li2025simple} in the context of linesearch-free proximal gradient methods under the convexity, but this is the first for nonconvex optimization.
Furthermore, it is also shown that the AC-PGM can be extended to the Bregman proximal gradient method.

Next, to demonstrate the generality of the auto-conditioned stepsize strategy, we propose linesearch-free first-order methods for other settings.
Specifically, auto-conditioned conditional gradient method (AC-CGM) and auto-conditioned Riemannian gradient method (AC-RGM) are considered.
Although the analyses for each algorithm slightly differ from that of the proximal gradient method, they share the common essential principle.
To the best of our knowledge, a linesearch-free conditional gradient method is proposed for the first time.
Regarding Riemannian gradient methods, \citet{ansarionnestam2025riemannian} developed an linesearch-free method; however, their analysis assumes the geodesic convexity.
In particular, on connected compact Riemannian manifolds, geodesically convex functions must be constant (see, e.g., \cite[Corollary 11.10]{boumal2023introduction}), and hence their applicability is limited.
In contrast, our convergence analysis for AC-RGM does not impose the geodesic convexity, and therefore the aforementioned restriction does not arise.

The rest of this paper is organized as follows.
The remainder of this section is devoted to notation.
In the next section, we introduce the AC-PGM and provide simple convergence analysis.
Convergence analysis in the presence of the KL property, adaptivity to the weak smoothness, and the extension to the Bregman proximal gradient method are also discussed.
Section \ref{sec:other-FoM} is devoted to other linesearch-free first-order methods.
Numerical experiments to demonstrate the performance of the auto-conditioned methods are reported in Section \ref{sec:numerical}.
Finally, Section \ref{sec:conclusion} concludes the paper with some remarks.

\subsection{Notation}
For a positive integer $n$, the set $[n]$ is defined by $[n]\coloneqq\{1,\ldots,n\}$.
We use $|S|$ to denote the cardinality of a finite set $S$.
Let $\EE$ be a finite-dimensional inner product space endowed with an inner product $\innerprod{\cdot}{\cdot}$.
The induced norm is denoted by $\|\cdot\|$.
Given a matrix $X$, $X^\top$ denotes the transpose of $X$.
$I$ denotes the identity matrix of appropriate size.
The trace of a square matrix $X$ is denoted by $\tr(X)$.
For a subset $\CC\subset\EE$, its interior and its convex hull are denoted by $\interior\CC$ and $\convhull\CC$, respectively.
The domain of a function $\phi:\EE\to(-\infty,\infty]$ is denoted by $\dom\phi\coloneqq\{x\in\mathbb{E}\mid\phi(x)<\infty\}$.

\section{Auto-conditioned proximal gradient method}
We consider a linesearch-free proximal gradient method for the following composite optimization problem
\begin{equation}\label{problem:composite}
    \underset{x\in\EE}{\mbox{minimize}} \quad F(x)\coloneqq f(x)+g(x),
\end{equation}
where $f:\EE\to(-\infty,\infty]$ is lower semicontinuous and continuously differentiable on an open set including $\dom g$, $g:\EE\to(-\infty,\infty]$ is a proper and lower semicontinuous function, and $F$ is bounded from below, namely, $F^*\coloneqq \inf_{x\in\EE}F(x)>-\infty$.

For $x\in\dom g=\dom F$,
\begin{equation}
    \widehat{\partial}F(x) \coloneqq \left\{\xi\in\mathbb{E}~\middle|~\liminf_{y\to x}\frac{F(y)-F(x)-\innerprod{\xi}{y-x}}{\|y-x\|}\ge0\right\}
\end{equation}
is called the Fr\'echet subdifferential of $F$ at $x$ and
\begin{equation}
    \partial F(x) \coloneqq \left\{\xi\in\mathbb{E}~\middle|~\exists\{x^k\},\{\xi^k\}~ \mbox{s.t.}~ x^k\to x,~ F(x^k)\to F(x),~ \xi^k\to \xi,~ \xi^k\in\widehat{\partial}F(x^k)\right\}
\end{equation}
is known as the limiting subdifferential of $F$ at $x$, where $\widehat{\partial}F(x)\coloneqq\emptyset$ for $x\notin\dom g$.
Clearly, $\widehat{\partial}F(x)\subset\partial F(x)$ holds.
Any local minimizer $x^*\in\dom g$ of \eqref{problem:composite} satisfies $0\in\widehat{\partial}F(x^*)\subset\partial F(x^*)$.
We call a point $x^*\in\dom g$ satisfying $0\in\partial F(x^*)$ an l-stationary point of \eqref{problem:composite}.
Due to the continuous differentiability of $f$, it holds that $\widehat{\partial}F(x)=\nabla f(x)+\widehat{\partial}g(x)$ and $\partial F(x)=\nabla f(x)+\partial g(x)$ for $x\in\dom g$ \citep[Exercise 8.8]{rockafellar2009variational}.

For the well-definedness of the proximal gradient method, the following is also assumed.

\begin{assumption}\label{assume:AC-PGM}
For any $\gamma>0,~ x\in\EE$,
\begin{equation}
    \prox_{\frac{g}{\gamma}}(x)\coloneqq\argmin_{y\in\EE}\left\{g(y)+\frac{\gamma}{2}\|y-x\|^2\right\}
\end{equation}
is nonempty.
\end{assumption}

Let $\gamma>0$, $x\in\dom g$, and
\begin{equation}\label{eq:prox-grad-map}
    x^+\in\prox_{\frac{g}{\gamma}}\left(x-\frac{1}{\gamma}\nabla f(x)\right)=\argmin_{y\in\EE}\left\{\innerprod{\nabla f(x)}{y}+\frac{\gamma}{2}\|y-x\|^2+g(y)\right\},
\end{equation}
then we define
\begin{equation}
    R_\gamma(x)\coloneqq\gamma(x-x^+),
\end{equation}
which is often called the gradient mapping \cite{nesterov2018lectures,beck2017first}.
The first-order optimality condition of \eqref{eq:prox-grad-map} leads
\begin{equation}
    \nabla f(x^+)-\nabla f(x)+\gamma(x-x^+)\in\widehat{\partial}F(x^+).
\end{equation}
Thus, $R_\gamma(x)=0$, equivalently, $x^+=x$ implies $0\in\widehat{\partial}F(x^+)\subset\partial F(x^+)$.
Accordingly, we employ $\|R_\gamma(x)\|$ as an optimality measure.

We also consider the following assumption for the composite problem \eqref{problem:composite}.

\begin{assumption}\label{assume:L-smoothness}
There exists $L>0$ such that
\begin{equation}\label{eq:descent-lemma}
    f(x)\le f(y)+\innerprod{\nabla f(y)}{x-y}+\frac{L}{2}\|x-y\|^2
\end{equation}
holds for any $x,y\in\dom g$.
\end{assumption}

The parameter $L$ is also called the upper curvature parameter.
It is well known that the descent lemma \eqref{eq:descent-lemma} is implied by the $L$-smoothness of $f$, namely,
\begin{equation}
    \|\nabla f(x)-\nabla f(y)\|\le L\|x-y\|
\end{equation}
holds for any $x,y\in\convhull(\dom g)$ \citep{nesterov2018lectures,beck2017first}.

The auto-conditioned proximal gradient method (AC-PGM) is summarized in Algorithm \ref{alg:AC-PGM}.

\begin{algorithm}[H]
\caption{Auto-conditioned proximal gradient method (AC-PGM)}
    \label{alg:AC-PGM}
    \begin{algorithmic}
    \STATE {\bfseries Input:} $x^0\in\dom g,~ \alpha>1,~ L_0>0$, and $k=1$.
    \REPEAT
    \STATE Compute
    \begin{align}
        \gamma_k &=\max\{L_0,\ldots,L_{k-1}\},\label{eq:stepsize-AC-PGM}\\
        x^k &\in\prox_{\frac{g}{\alpha\gamma_k}}\left(x^{k-1}-\frac{1}{\alpha\gamma_k}\nabla f(x^{k-1})\right),\label{eq:iterate-AC-PGM}\\
        L_k &=\frac{2(f(x^k)-f(x^{k-1})-\innerprod{\nabla f(x^{k-1})}{x^k-x^{k-1}})}{\|x^k-x^{k-1}\|^2}.\label{eq:estimate-AC-PGM}
    \end{align}
    \STATE Set $k\leftarrow k+1$.
    \UNTIL Termination criterion is satisfied.
    \end{algorithmic}
\end{algorithm}

Algorithm \ref{alg:AC-PGM} does not require any prior knowledge of the upper curvature parameter $L$ and any linesearch procedures.
If $x^k=x^{k-1}$, then $R_{\alpha\gamma_k}(x^{k-1})=0$; therefore, the algorithm can be terminated before computing $L_k$ in \eqref{eq:estimate-AC-PGM}.
Otherwise, $L_k \in \RR$ is well-defined because of $x^k\neq x^{k-1}$.

By the definition of $\gamma_k$ in \eqref{eq:stepsize-AC-PGM}, $\{\gamma_k\}$ is monotonically nondecreasing.
We introduce the index sets
\begin{equation}\label{eq:S}
    \SS\coloneqq\left\{k\ge1~\middle|~\beta\gamma_k\ge L_k\right\}, \text{ where }
    \beta \coloneqq \frac{\alpha+1}{2}>1;
    \qquad 
    \overline{\SS}\coloneqq\{1,2,\ldots\}\setminus\SS.
\end{equation}
$k\in\overline{\SS}$ means that the estimation of $L$ is not successful at the $k$-th iteration.

The following lemma is essential for our convergence analysis of Algorithm \ref{alg:AC-PGM}, which is valid without Assumption~\ref{assume:L-smoothness}.

\begin{lemma}\label{lem:gen-bound-AC-PGM}
Let $\{x^k\}$ be a sequence generated by Algorithm \ref{alg:AC-PGM} satisfying $x^k\neq x^{k-1}$ for all $k\ge1$.
Suppose that Assumption \ref{assume:AC-PGM} holds.
Then, we have for any $k \geq 1$ that
\begin{equation}\label{eq:gen-bound-AC-PGM}
    \frac{\alpha-1}{4\alpha^2}\sum_{l \in [k]} \frac{1}{\gamma_l}\|R_{\alpha \gamma_l}(x^{l-1})\|^2\le F(x^0)-F(x^k) + \sum_{l \in [k]\cap\overline{\SS}} \frac{\gamma_{l+1}-\gamma_l}{2}\|x^l-x^{l-1}\|^2.
\end{equation}
\end{lemma}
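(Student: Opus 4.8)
The plan is to prove a single per-iteration inequality and then sum it over $l\in[k]$, so that the function-value differences telescope into $F(x^0)-F(x^k)$ and the correction terms survive only on $\overline{\SS}$.

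First I would derive a one-step descent inequality from the subproblem \eqref{eq:iterate-AC-PGM}. Since $x^l$ minimizes the objective in \eqref{eq:iterate-AC-PGM}, comparing its value at $x^l$ with its value at $x^{l-1}$ gives
\[
    \innerprod{\nabla f(x^{l-1})}{x^l-x^{l-1}}+\frac{\alpha\gamma_l}{2}\|x^l-x^{l-1}\|^2+g(x^l)\le g(x^{l-1}).
\]
Combining this with the definition \eqref{eq:estimate-AC-PGM}, rewritten as $f(x^l)-f(x^{l-1})-\innerprod{\nabla f(x^{l-1})}{x^l-x^{l-1}}=\frac{L_l}{2}\|x^l-x^{l-1}\|^2$, the inner-product terms cancel and yield the descent bound
\[
    F(x^{l-1})-F(x^l)\ge\frac{\alpha\gamma_l-L_l}{2}\|x^l-x^{l-1}\|^2.
\]
I would also record, directly from $R_{\alpha\gamma_l}(x^{l-1})=\alpha\gamma_l(x^{l-1}-x^l)$, the identity $\|x^l-x^{l-1}\|^2=\frac{1}{(\alpha\gamma_l)^2}\|R_{\alpha\gamma_l}(x^{l-1})\|^2$. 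Only Assumption~\ref{assume:AC-PGM} is used here (to make the prox step well defined); Assumption~\ref{assume:L-smoothness} is not invoked, consistent with the statement.

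Next I would split according to $\SS$ and $\overline{\SS}$. For $l\in\SS$ the bound $L_l\le\beta\gamma_l=\frac{\alpha+1}{2}\gamma_l$ gives $\alpha\gamma_l-L_l\ge\frac{\alpha-1}{2}\gamma_l$, so the descent bound and the identity above yield $\frac{\alpha-1}{4\alpha^2\gamma_l}\|R_{\alpha\gamma_l}(x^{l-1})\|^2\le F(x^{l-1})-F(x^l)$, with no correction term. For $l\in\overline{\SS}$, i.e.\ $L_l>\beta\gamma_l>\gamma_l$, the key observation is that $\gamma_{l+1}=\max\{\gamma_l,L_l\}=L_l$, so the correction term equals $\frac{\gamma_{l+1}-\gamma_l}{2}\|x^l-x^{l-1}\|^2=\frac{L_l-\gamma_l}{2}\|x^l-x^{l-1}\|^2$; adding it to the descent bound makes the $L_l$ contributions cancel, giving
\[
    F(x^{l-1})-F(x^l)+\frac{\gamma_{l+1}-\gamma_l}{2}\|x^l-x^{l-1}\|^2\ge\frac{(\alpha-1)\gamma_l}{2}\|x^l-x^{l-1}\|^2\ge\frac{\alpha-1}{4\alpha^2\gamma_l}\|R_{\alpha\gamma_l}(x^{l-1})\|^2,
\]
where the last step uses the identity again and $\tfrac12\ge\tfrac14$.

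Finally I would sum the resulting per-iteration inequality over $l=1,\ldots,k$: the differences $F(x^{l-1})-F(x^l)$ telescope to $F(x^0)-F(x^k)$, while the correction terms appear exactly for $l\in[k]\cap\overline{\SS}$, producing \eqref{eq:gen-bound-AC-PGM}. I expect the only genuine obstacle to be unifying the two cases into one inequality of the claimed form; the crux is the identity $\gamma_{l+1}=L_l$ on $\overline{\SS}$, which lets the correction term precisely absorb the possibly negative factor $\alpha\gamma_l-L_l$ and restore monotone descent.
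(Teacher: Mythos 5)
Your proposal is correct and follows essentially the same route as the paper's own proof: the optimality inequality for the prox subproblem combined with the exact identity defining $L_l$, a case split on $\SS$ versus $\overline{\SS}$ using $\beta\gamma_l\ge L_l$ in the first case and $\gamma_{l+1}=L_l$ in the second so that the correction term absorbs the deficit, and finally telescoping the sum. The only cosmetic difference is that you unify the two cases into a single per-iteration inequality via the bound $\tfrac12\ge\tfrac14$, whereas the paper keeps the sharper factor $\tfrac{1}{2}$ on $\overline{\SS}$ and merges the cases only when summing.
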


\begin{proof}
We obtain from the optimality of $x^k$ in \eqref{eq:iterate-AC-PGM} that
\begin{equation}
    \innerprod{\nabla f(x^{k-1})}{x^k-x^{k-1}}+\frac{\alpha\gamma_k}{2}\|x^k-x^{k-1}\|^2+g(x^k)-g(x^{k-1})\le0.
\end{equation}
Combining this with \eqref{eq:estimate-AC-PGM} yields
\begin{equation}\label{eq:pseudo-decrease-AC-PGM}
    \frac{\alpha\gamma_k-L_k}{2}\|x^k-x^{k-1}\|^2+F(x^k)-F(x^{k-1})\le0.
\end{equation}
If $k\in\SS$, by \eqref{eq:pseudo-decrease-AC-PGM} and the definition of $\SS$, we have 
\begin{align}
    F(x^{k-1})-F(x^k)\stackrel{\eqref{eq:pseudo-decrease-AC-PGM}}{\ge}\frac{\alpha\gamma_k-L_k}{2}\|x^k-x^{k-1}\|^2 &\ge\frac{\alpha-1}{4}\gamma_k\|x^k-x^{k-1}\|^2 \qquad (\because \beta\gamma_k-L_k\ge0)\label{eq:decrease-AC-PGM}\\
    &=\frac{\alpha-1}{4\alpha^2\gamma_k}\|R_{\alpha\gamma_k}(x^{k-1})\|^2.
\end{align}
On the other hand, $k\notin\SS$ implies $\gamma_{k+1}=\max\{L_0,\ldots,L_k\}=L_k$. 
Thus, it follows from \eqref{eq:pseudo-decrease-AC-PGM} that
\begin{align}
    \frac{\alpha-1}{2\alpha^2\gamma_k}\|R_{\alpha\gamma_k}(x^{k-1})\|^2
    &=\frac{\alpha-1}{2}\gamma_k\|x^k-x^{k-1}\|^2\\
    &\stackrel{\eqref{eq:pseudo-decrease-AC-PGM}}{\le} F(x^{k-1})-F(x^k)+\frac{\gamma_{k+1}-\gamma_k}{2}\|x^k-x^{k-1}\|^2.
\end{align}
By summing up, we conclude
\begin{equation}
    \frac{\alpha-1}{4\alpha^2}\sum_{l\in[k]}\frac{1}{\gamma_l}\|R_{\alpha\gamma_l}(x^{l-1})\|^2
    \le F(x^0)-F(x^k)+\sum_{l\in[k]\cap\overline{\SS}}\frac{\gamma_{l+1}-\gamma_l}{2}\|x^l-x^{l-1}\|^2.
\end{equation}
\end{proof}

Under Assumption \ref{assume:L-smoothness}, it holds that $L_k\le L$ and so $\gamma_k\le\max\{L_0,L\}$ for all $k\ge1$.
If $k\in\overline{\SS}$, then we have $\beta\gamma_k<L_k=\gamma_{k+1}\le\max\{L_0,L\}$, and hence
\begin{equation}\label{eq:failure-bound-AC-PGM}
    |\overline{\SS}|\le\left\lceil\log_\beta\frac{\max\{L_0,L\}}{L_0}\right\rceil.
\end{equation}
In other words, the estimation of $L$ fails at most finitely many times.

The convergence of Algorithm \ref{alg:AC-PGM} under Assumption \ref{assume:L-smoothness} is obtained as follows.

\begin{theorem}\label{thm:convergence-AC-PGM}
Let $\{x^k\}$ be a sequence generated by Algorithm \ref{alg:AC-PGM} satisfying $x^k\neq x^{k-1}$ for all $k\ge1$.
Suppose that Assumptions \ref{assume:AC-PGM} and \ref{assume:L-smoothness} hold.
Then the following assertions hold.
\begin{enumerate}[(i)]
    \item It follows that
    \begin{equation}\label{eq:sublinear-rate-AC-PGM}
        \min_{1\le l\le k}\|R_{\alpha\gamma_l}(x^{l-1})\| \le \sqrt{\frac{2\alpha^2\max\{L_0,L\}(2\Delta+C)}{(\alpha-1)k}}=\OO\left(k^{-\frac{1}{2}}\right)
    \end{equation}
    for all $k\ge1$, where $\Delta\coloneqq F(x^0)-F^*$ and 
    \begin{equation}\label{eq:constC-ACPGM}
        C \coloneqq (\max\{L_0,L\}-L_0)\max_{l \in \overline{\SS}}\|x^l-x^{l-1}\|^2 < \infty.
    \end{equation}
    \item The sequence $\{F(x^k)\}$ converges to a certain finite value and any accumulation point of $\{x^k\}$ is an l-stationary point of \eqref{problem:composite}.
\end{enumerate}
\end{theorem}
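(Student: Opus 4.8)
The plan is to derive both assertions from Lemma \ref{lem:gen-bound-AC-PGM} by exploiting the two consequences of Assumption \ref{assume:L-smoothness} recorded just before the theorem: the uniform bound $\gamma_k \le \max\{L_0, L\}$ and the finiteness of $\overline{\SS}$ from \eqref{eq:failure-bound-AC-PGM}.

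For part (i), I would first turn the left-hand side of \eqref{eq:gen-bound-AC-PGM} into a clean sum of squared residuals by replacing each $1/\gamma_l$ with the smaller quantity $1/\max\{L_0,L\}$. On the right-hand side I would bound $F(x^0)-F(x^k)\le F(x^0)-F^*=\Delta$ and control the failure sum: since $\{\gamma_k\}$ is nondecreasing, $\sum_{l\in[k]\cap\overline{\SS}}(\gamma_{l+1}-\gamma_l)\le\gamma_{k+1}-\gamma_1\le\max\{L_0,L\}-L_0$, while each $\|x^l-x^{l-1}\|^2$ with $l\in\overline{\SS}$ is at most $\max_{l\in\overline{\SS}}\|x^l-x^{l-1}\|^2$, so the whole failure sum is at most $C/2$ with $C$ as in \eqref{eq:constC-ACPGM}; finiteness of $C$ is immediate because $\overline{\SS}$ is finite. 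Combining yields a uniform bound $\sum_{l\in[k]}\|R_{\alpha\gamma_l}(x^{l-1})\|^2\le\frac{2\alpha^2\max\{L_0,L\}(2\Delta+C)}{\alpha-1}$, and the rate \eqref{eq:sublinear-rate-AC-PGM} then follows by bounding the minimum by the average over $l\in[k]$ and taking a square root.

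For part (ii), the same uniform bound shows $\sum_{l\ge1}\|R_{\alpha\gamma_l}(x^{l-1})\|^2<\infty$, hence $\|R_{\alpha\gamma_k}(x^{k-1})\|\to0$; since $\|R_{\alpha\gamma_k}(x^{k-1})\|=\alpha\gamma_k\|x^k-x^{k-1}\|\ge\alpha L_0\|x^k-x^{k-1}\|$, this gives $\|x^k-x^{k-1}\|\to0$. For the convergence of $\{F(x^k)\}$ I would use that $\overline{\SS}$ is finite: letting $k_0=\max\overline{\SS}$, every index $k>k_0$ lies in $\SS$, so by \eqref{eq:decrease-AC-PGM} the tail $\{F(x^k)\}_{k>k_0}$ is nonincreasing, and being bounded below by $F^*$ it converges; thus $\{F(x^k)\}$ converges to some finite value $F_\infty$.

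The remaining and most delicate step is to show an accumulation point $x^*$ is l-stationary. Writing the first-order optimality of \eqref{eq:iterate-AC-PGM} as $\xi^k\coloneqq\nabla f(x^k)-\nabla f(x^{k-1})+R_{\alpha\gamma_k}(x^{k-1})\in\widehat{\partial}F(x^k)$, I would pass to a subsequence $x^{k_j}\to x^*$. Since $F=f+g$ is lower semicontinuous and $F(x^{k_j})\to F_\infty<\infty$, we get $x^*\in\dom F=\dom g$, which lies in the open set on which $f$ is continuously differentiable; together with $\|x^k-x^{k-1}\|\to0$ this gives $x^{k_j-1}\to x^*$ and $\nabla f(x^{k_j})-\nabla f(x^{k_j-1})\to0$, so $\xi^{k_j}\to0$. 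The obstacle is that the limiting-subdifferential definition also requires $F(x^{k_j})\to F(x^*)$, whereas lower semicontinuity only yields $F(x^*)\le F_\infty$. To obtain the reverse inequality I would invoke the prox inequality from \eqref{eq:iterate-AC-PGM} with comparison point $y=x^*$ and let $j\to\infty$; continuity of $\nabla f$ at $x^*$ and $\|x^{k_j}-x^{k_j-1}\|\to0$ make all cross terms vanish, giving $\limsup_j g(x^{k_j})\le g(x^*)$, hence $g(x^{k_j})\to g(x^*)$ and $F(x^{k_j})\to F(x^*)=F_\infty$. The definition of the limiting subdifferential then yields $0\in\partial F(x^*)$, i.e. $x^*$ is an l-stationary point of \eqref{problem:composite}.
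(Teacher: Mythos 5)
Your proposal is correct and follows essentially the same route as the paper's proof: Lemma \ref{lem:gen-bound-AC-PGM} plus the bounds $\gamma_l\le\max\{L_0,L\}$ and $\gamma_{k+1}-\gamma_1\le\max\{L_0,L\}-L_0$ for part (i), and for part (ii) the eventual sufficient decrease \eqref{eq:decrease-AC-PGM}, the prox inequality with comparison point $x^*$ to force $\limsup g(x^{k_j})\le g(x^*)$, and the closure property of the limiting subdifferential applied to $\xi^k\to0$. The only (beneficial) deviation is minor: you establish $x^*\in\dom g$ first via lower semicontinuity of $F$, which cleanly justifies the boundedness of $\nabla f(x^{k_j-1})$ when passing to the limit in the prox inequality, a point the paper leaves implicit.
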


\begin{proof}
(i) By Lemma \ref{lem:gen-bound-AC-PGM}, we have
\begin{align}
    \frac{\alpha-1}{4\alpha^2\max\{L_0,L\}}k\min_{1\le l\le k}\|R_{\alpha\gamma_l}(x^{l-1})\|^2 &\le \frac{\alpha-1}{4\alpha^2}\sum_{l\in[k]}\frac{1}{\gamma_l}\|R_{\alpha\gamma_l}(x^{l-1})\|^2 \qquad (\because \gamma_l \leq \max\{L_0,L\})\\
    &\le F(x^0)-F(x^k)+\sum_{l\in[k]\cap\overline{\SS}}\frac{\gamma_{l+1}-\gamma_l}{2}\|x^l-x^{l-1}\|^2 \qquad (\because ~\text{Lemma~\ref{lem:gen-bound-AC-PGM}})\\
    &\le \Delta+\sum_{l\in[k]\cap\overline{\SS}}\frac{\gamma_{l+1}-\gamma_l}{2}\|x^l-x^{l-1}\|^2 \qquad (\because F(x^k)\ge F^*).
\end{align}
Rearranging this yields
\begin{equation}
    \min_{1\le l\le k}\|R_{\alpha\gamma_l}(x^{l-1})\| \le \sqrt{\frac{2\alpha^2\max\{L_0,L\}\{2\Delta+\sum_{l\in[k]\cap\overline{\SS}}(\gamma_{l+1}-\gamma_l)\|x^l-x^{l-1}\|^2\}}
    {(\alpha-1)k}}.
\end{equation}
It remains to show $\sum_{l\in[k]\cap\overline{\SS}}(\gamma_{l+1}-\gamma_l)\|x^l-x^{l-1}\|^2 \leq C$.
In fact, we have
\begin{align}
    &\sum_{l\in[k]\cap\overline{\SS}}(\gamma_{l+1}-\gamma_l)\|x^l-x^{l-1}\|^2
    \le\max_{l \in \overline{\SS}}\|x^l-x^{l-1}\|^2\sum_{l\in[k]\cap\overline{\SS}}(\gamma_{l+1}-\gamma_l)\\
    &\le \max_{l \in \overline{\SS}}\|x^l-x^{l-1}\|^2\sum_{l\in[k]}(\gamma_{l+1}-\gamma_l) \qquad (\because \text{the monotonicity of }\{\gamma_l\})\\
    &=(\gamma_{k+1}-\gamma_1)\max_{l \in \overline{\SS}}\|x^l-x^{l-1}\|^2\\
    &\le(\max\{L_0,L\}-L_0)\max_{l \in \overline{\SS}}\|x^l-x^{l-1}\|^2 \qquad (\because \gamma_{k+1}\le\max\{L_0,L\}\text{ and }\gamma_1=L_0).\\
    &=C
\end{align}
Since $\overline{\SS}$ is a finite set (see \eqref{eq:failure-bound-AC-PGM}), the constant $C$ is finite.

(ii) By \eqref{eq:decrease-AC-PGM} and the finiteness of $\overline{\SS}$, it holds that
\begin{equation}\label{eq:sufficient-decrease-AC-PGM}
    \frac{\alpha-1}{4}L_0\|x^k-x^{k-1}\|^2\le F(x^{k-1})-F(x^k)
\end{equation}
for all sufficiently large $k$.
Thus, we see from the boundedness from below of $F$ that $\{F(x^k)\}$ converges, and hence
\begin{equation}\label{eq:diff-vanish-AC-PGM}
    \|x^k-x^{k-1}\|\to0.
\end{equation}
Let $\{x^k\}_K$ be a subsequence of $\{x^k\}$ converging to some point $x^*$.
Then, $\{x^{k-1}\}_K$ also converges to $x^*$.
Since $x^k$ is optimal to the subproblem in \eqref{eq:iterate-AC-PGM}, we have
\begin{equation}
    \innerprod{\nabla f(x^{k-1})}{x^k-x^*}+\frac{\alpha\gamma_k}{2}\|x^k-x^{k-1}\|^2+g(x^k)\le \frac{\alpha\gamma_k}{2}\|x^*-x^{k-1}\|^2+g(x^*).
\end{equation}
By \eqref{eq:diff-vanish-AC-PGM} and the boundedness of $\{\gamma_k\}$, taking the upper limit $k\to_K\infty$ gives
\begin{equation}
    \limsup_{k\to_K\infty}g(x^k)\le g(x^*).
\end{equation}
Combining this with the lower semicontinuity of $g$ and continuity of $f$ yields $F(x^k)\to_KF(x^*)$.
As $\{F(x^k)\}$ converges, we have $\lim_{k\to\infty}F(x^k)=F(x^*)$, and hence $x^*\in\dom F=\dom g$.
From the optimality of $x^k$ in \eqref{eq:iterate-AC-PGM}, we have
\begin{equation}
    0\in\nabla f(x^{k-1})+\alpha\gamma_k(x^k-x^{k-1})+\widehat{\partial}g(x^k),
\end{equation}
which implies
\begin{equation}
    \xi^k \coloneqq \nabla f(x^k)-\nabla f(x^{k-1})+\alpha\gamma_k(x^{k-1}-x^k) \in \nabla f(x^k) + \widehat{\partial} g(x^k) = \widehat{\partial} F(x^k).
\end{equation}
We see from $\gamma_k\|x^k-x^{k-1}\|\to0$ and the continuity of $\nabla f$ that $\xi^k\to_K0$, which implies that $0\in\partial F(x^*)$.
\end{proof}

\begin{remark}
If $\alpha=1$, then the AC-PGM coincides with that of \citet{lan2024projected}.
If the convexity of $g$ is assumed, one can obtain
\begin{equation}
    \frac{2\alpha\gamma_k-L_k}{2}\|x^k-x^{k-1}\|^2+F(x^k)-F(x^{k-1})\le0
\end{equation}
instead of \eqref{eq:pseudo-decrease-AC-PGM}, and thus one may set $\alpha=1$ (more generally, $\alpha>1/2$).
On the other hand, unlike \citet{lan2024projected}, since we do not assume the convexity, it is necessary to set $\alpha>1$.
\end{remark}

From Theorem \ref{thm:convergence-AC-PGM}, we have the following complexity bound.

\begin{corollary}\label{cor:complexity-AC-PGM}
Under the same assumptions as in Theorem \ref{thm:convergence-AC-PGM},
denote $D\coloneqq \max_{l \in \overline{\SS}}\|x^l-x^{l-1}\|<\infty$.
Then, Algorithm \ref{alg:AC-PGM} finds an $\varepsilon$-stationary point satisfying $\|R_{\alpha\gamma_k}(x^{k-1})\|\le\varepsilon$ within
\begin{equation}\label{eq:compl-ACPGM-bounded}
    \frac{2\alpha^2\max\{L_0,L\}\{2\Delta+(\max\{L_0,L\}-L_0)D^2\}}{(\alpha-1)\varepsilon^2}
\end{equation}
iterations.
\end{corollary}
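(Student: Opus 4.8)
The plan is to read this off directly from part (i) of Theorem~\ref{thm:convergence-AC-PGM}, since the corollary is just the inverted form of the sublinear rate. First I would recall the established bound
\begin{equation}
    \min_{1\le l\le k}\|R_{\alpha\gamma_l}(x^{l-1})\| \le \sqrt{\frac{2\alpha^2\max\{L_0,L\}(2\Delta+C)}{(\alpha-1)k}},
\end{equation}
with $C$ defined in \eqref{eq:constC-ACPGM}. The only bookkeeping step is to observe that since each $\|x^l-x^{l-1}\|\ge0$, squaring is monotone, so that
\begin{equation}
    D^2=\Big(\max_{l\in\overline{\SS}}\|x^l-x^{l-1}\|\Big)^2=\max_{l\in\overline{\SS}}\|x^l-x^{l-1}\|^2,
\end{equation}
whence $C=(\max\{L_0,L\}-L_0)D^2$. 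The finiteness $D<\infty$ is inherited from the finiteness of $\overline{\SS}$ established in \eqref{eq:failure-bound-AC-PGM}.

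Next I would impose the target accuracy: to guarantee an $\varepsilon$-stationary point it suffices that the right-hand side of the displayed rate be at most $\varepsilon$. Squaring and rearranging, this holds as soon as
\begin{equation}
    k\ge\frac{2\alpha^2\max\{L_0,L\}(2\Delta+C)}{(\alpha-1)\varepsilon^2}
    =\frac{2\alpha^2\max\{L_0,L\}\{2\Delta+(\max\{L_0,L\}-L_0)D^2\}}{(\alpha-1)\varepsilon^2},
\end{equation}
which is precisely the claimed iteration count \eqref{eq:compl-ACPGM-bounded}. Since the quantity on the left of the rate is a minimum over $1\le l\le k$, reaching this $k$ certifies that at least one iterate index $l$ satisfies $\|R_{\alpha\gamma_l}(x^{l-1})\|\le\varepsilon$, as required.

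There is essentially no hard part here; the argument is a one-line manipulation of Theorem~\ref{thm:convergence-AC-PGM}(i). The only point warranting a sentence of care is the identification $D^2=\max_{l\in\overline{\SS}}\|x^l-x^{l-1}\|^2$ so that the constant $C$ from the theorem matches the term $(\max\{L_0,L\}-L_0)D^2$ appearing in the stated bound, together with noting that the finiteness of $C$ (equivalently of $D$) is already guaranteed by the finiteness of $\overline{\SS}$.
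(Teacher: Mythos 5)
Your proposal is correct and matches the paper's own (implicit) argument exactly: the corollary is stated as an immediate consequence of Theorem~\ref{thm:convergence-AC-PGM}(i), obtained by identifying $C=(\max\{L_0,L\}-L_0)D^2$ via $D^2=\max_{l\in\overline{\SS}}\|x^l-x^{l-1}\|^2$ and solving the rate bound for $k$ with target accuracy $\varepsilon$. No gaps.
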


When $L_0\ge L$, all the iterations of the AC-PGM are successful and so it is merely the proximal gradient method with the constant stepsize $1/(\alpha L_0)$. In this case, the rate $\OO(\sqrt{L_0\Delta/k})$ of convergence given by Theorem~\ref{thm:convergence-AC-PGM} is well-known (see, e.g., \cite{beck2017first,nesterov2018lectures}).
When $L_0<L$, the iteration complexity \eqref{eq:compl-ACPGM-bounded} is of the form
\begin{equation}\label{eq:compl-order-ACPGM-diam}
    \OO\left(\frac{L\Delta}{\varepsilon^2} + \frac{L^2D^2}{\varepsilon^2}\right).
\end{equation}
Except the second term coming from unsuccessful iterations, the first term $\OO(L\Delta/\varepsilon^2)$ coincides with the lower complexity bound for smooth nonconvex optimization \cite{carmon2020lower}.

The AC-PGM is closely related to the auto-conditioned projected gradient method proposed by \citet[Algorithm 1]{lan2024projected}. Their method was analyzed when $g$ is convex with bounded domain, $f$ is $L$-smooth and $l$-weakly convex\footnote{Namely, $f(x)\geq f(y)+\innerprod{\nabla f(y)}{x-y}-\frac{l}{2}\|x-y\|^2$ for any $x,y \in \dom g$} on $\dom g$. It ensures the iteration complexity
\begin{equation}\label{eq:bound-lan2024}
\OO\left(\frac{LD_g}{\varepsilon} + \frac{LlD_g^2}{\varepsilon^2} + \log \frac{L}{L_0}\right) \qquad (\text{where }D_g\coloneqq \sup\{\|x-y\|:x,y \in \dom g\}),
\end{equation}
which interpolates the convergence rate between the convex and the weakly convex cases.
We remark that our analysis to obtain the complexity bound \eqref{eq:compl-order-ACPGM-diam} assumes neither the convexity of $g$, the boundedness of $\dom g$, nor the weak convexity of $f$.
It should also be noted that, since \cite{lan2024projected} conduct a unified analysis of both convex and nonconvex problems, their analysis becomes more complicated, whereas ours is simpler.

The auto-conditioned proximal gradient method by \citet{Hoai2025proximal} guarantees the rate $\OO(k^{-1/2})$ of convergence. However, it is stated for $k\geq \bar{k}$ with unknown index $\bar{k}$ (see \cite[Theorem 4.1]{Hoai2025proximal}). Moreover, \cite{Hoai2025proximal} imposes the $L$-smoothness of $f$, the convexity of $g$, and the quasiconvexity of the univariate function $t \mapsto \innerprod{\nabla f(x+t(y-x))}{y-x}$ on $[0,1]$ that are not assumed in our result.
They estimate the Lipschitz constant based on $\frac{\|\nabla f(x^k)-\nabla f(x^{k-1})\|}{\|x^k-x^{k-1}\|}$.
The convexity of $g$ and the quasiconvexity of the univariate function are employed to derive the descent property from the estimated Lipschitz constant.
In contrast, since we use \eqref{eq:estimate-AC-PGM}, such assumptions are not required.

\subsection{Convergence result under KL assumption}
The Kurdyka--\L ojasiewicz (KL) property is often used in the analysis of first-order methods to provide the convergence of the entire sequence and the convergence rate \cite{attouch2009convergence,attouch2010proximal,attouch2013convergence,bolte2014proximal,frankel2015splitting,jia2023convergence}, and it is also used for this purpose in this paper.
In this subsection, we assume the KL property of $F$.

\begin{assumption}\label{assume:KL-property}
For any $x^*\in\dom\partial F$, the objective function $F$ has the KL property at $x^*$, that is, there exists a positive constant $\varpi$, a neighborhood $\mathcal{U}$ of $x^*$, and a continuous concave function $\chi:[0,\varpi)\to[0,\infty)$ that is continuously differentiable on $(0,\varpi)$ and satisfies $\chi(0)=0$ as well as $\chi'(t)>0$ on $(0,\varpi)$, such that
\begin{equation}
    \chi'(F(x)-F(x^*))~\dist(0,\partial F(x))\ge1
\end{equation}
holds for all $x\in\mathcal{U}$ satisfying $F(x^*)<F(x)<F(x^*)+\varpi$.
\end{assumption}

If Assumption \ref{assume:KL-property} holds with  $\chi(t)=ct^{1-\theta}$ for some $c>0$ and $\theta\in[0,1)$, then we say that $F$ has the KL property of exponent $\theta$ at $x^*$.
It is known that wide classes of functions, including semialgebraic or subanalytic ones, admit the KL property (see, e.g., \cite{shiota1997geometry,bolte2007lojasiewicz,li2017calculus} and references therein).

We now provide convergence result for the AC-PGM in the presence of the KL property.

Let $\{x^k\}_K$ be a subsequence of $\{x^k\}$ converging $x^*$.
In view of Theorem \ref{thm:convergence-AC-PGM}, we have $0\in\partial F(x^*)$, and hence it holds that $x^*\in\dom\partial F$.
Recalling the proof of Theorem \ref{thm:convergence-AC-PGM}, we see that
\begin{align}
    &\lim_{k\to\infty}F(x^k)=F(x^*),\\
    &\lim_{k\to\infty}\|x^k-x^{k-1}\|=0,\\
    &F(x^k)\le F(x^{k-1})-\frac{\alpha-1}{4}L_0\|x^k-x^{k-1}\|^2,\\
    &\xi^k=\nabla f(x^k)-\nabla f(x^{k-1})+\alpha\gamma_k(x^{k-1}-x^k) \in\partial F(x^k),
\end{align}
hold for all sufficiently large $k$.
By assuming the $L$-smoothness of $f$, which is stronger than Assumption \ref{assume:L-smoothness}, we obtain
\begin{align}
    \|\xi^k\| &=\|\nabla f(x^k)-\nabla f(x^{k-1})+\alpha\gamma_k(x^{k-1}-x^k)\|\\
    &\le(L+\alpha\max\{L_0,L\})\|x^{k-1}-x^k\|
\end{align}
because $\gamma_k\le\max\{L_0,L\}$ for all $k\ge1$.
Applying the convergence results for abstract descent methods \cite{frankel2015splitting,qian2025superlinear} yields the following result.

\begin{theorem}\label{thm:KL}
Let $\{x^k\}$ be a sequence generated by Algorithm \ref{alg:AC-PGM} satisfying $x^k\neq x^{k-1}$ for all $k\ge1$.
Suppose that Assumptions \ref{assume:AC-PGM} and \ref{assume:KL-property} hold, $f$ in $L$-smooth, and there exists a subsequence of $\{x^k\}$ converging $x^*$.
Then $\sum_{k=0}^\infty\|x^k-x^{k-1}\|<\infty$ and $F(x^k)\to F(x^*)$ hold, particularly, $\{x^k\}$ also converges to $x^*$.
Moreover, $F$ has the KL property of exponent $\theta\in(0,1)$, then the following assertions hold:
\begin{enumerate}[(i)]
    \item If $\theta\in(0,1/2)$, then $\{F(x^k)\}$ and $\{x^k\}$ converges Q-superlinearly of order $\frac{1}{2\theta}$;
    \item If $\theta=1/2$, then $\{F(x^k)\}$ and $\{x^k\}$ converges Q-linearly and R-linearly, respectively;
    \item If $\theta\in(1/2,1)$, then there exist $c_1, c_2>0$ such that
    \begin{align}
        F(x^k)-F(x^*)\le c_1k^{-\frac{1}{2\theta-1}},\\
        \|x^k-x^*\|\le c_2k^{-\frac{1-\theta}{2\theta-1}}.
    \end{align}
\end{enumerate}
\end{theorem}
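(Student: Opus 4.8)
The plan is to observe that the properties assembled in the discussion preceding the statement are exactly the structural ingredients of the abstract convergence theory for descent methods under the KL property. Setting $a\coloneqq\frac{\alpha-1}{4}L_0>0$ and $b\coloneqq L+\alpha\max\{L_0,L\}<\infty$, one has for all sufficiently large $k$ the sufficient-decrease inequality $F(x^k)+a\|x^k-x^{k-1}\|^2\le F(x^{k-1})$ and the subgradient bound $\dist(0,\partial F(x^k))\le\|\xi^k\|\le b\|x^k-x^{k-1}\|$, together with $F(x^k)\to F(x^*)$ and $0\in\partial F(x^*)$; the finiteness of $b$ relies on the uniform bound $\gamma_k\le\max\{L_0,L\}$ guaranteed by Assumption~\ref{assume:L-smoothness}. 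With these in hand I would invoke the abstract results of \cite{frankel2015splitting,qian2025superlinear}, and for completeness I sketch the mechanism below.

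First I would establish finite length. The standing hypothesis $x^k\neq x^{k-1}$ combined with the strict decrease $F(x^k)<F(x^{k-1})$ forces $F(x^k)>F(x^*)$ for all large $k$, so $\chi'$ is always evaluated at a positive argument. Abbreviating $\chi_k\coloneqq\chi(F(x^k)-F(x^*))$, concavity of $\chi$ gives $\chi_{k-1}-\chi_k\ge\chi'(F(x^k)-F(x^*))\,(F(x^{k-1})-F(x^k))$; substituting the KL inequality $\chi'(F(x^k)-F(x^*))\ge1/\dist(0,\partial F(x^k))$, the subgradient bound, and the sufficient decrease yields the clean one-step estimate $\|x^k-x^{k-1}\|\le\frac{b}{a}(\chi_{k-1}-\chi_k)$. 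The key simplification is that our certificate $\xi^k$ depends only on the single increment $x^k-x^{k-1}$, so the error bound carries no contribution from the previous step and the right-hand side telescopes directly; summing over a tail gives $\sum_k\|x^k-x^{k-1}\|<\infty$, whence $\{x^k\}$ is Cauchy and, sharing the accumulation point $x^*$, converges to it.

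Next I would derive the rates for $\chi(t)=ct^{1-\theta}$. Writing $r_k\coloneqq F(x^k)-F(x^*)$, the KL inequality with the subgradient bound gives $r_k^{\theta}\le\kappa_0\|x^k-x^{k-1}\|$ with $\kappa_0\coloneqq c(1-\theta)b$, while sufficient decrease gives $\|x^k-x^{k-1}\|^2\le(r_{k-1}-r_k)/a$. Combining the two as $r_k^{2\theta}\le\frac{\kappa_0^2}{a}(r_{k-1}-r_k)$ produces the recursion $r_{k-1}-r_k\ge\tilde\kappa\,r_k^{2\theta}$, from which the classical lemmas yield Q-linear decay when $\theta=1/2$ and the algebraic rate $r_k=\OO(k^{-1/(2\theta-1)})$ when $\theta\in(1/2,1)$. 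For $\theta\in(0,1/2)$ I would instead bound $\|x^k-x^{k-1}\|\le\sqrt{r_{k-1}/a}$ and feed it into $r_k^{\theta}\le\kappa_0\|x^k-x^{k-1}\|$ to obtain $r_k\le\big(\kappa_0/\sqrt{a}\big)^{1/\theta}r_{k-1}^{1/(2\theta)}$, which is Q-superlinear of order $1/(2\theta)$. In every regime the iterate rate then follows from the tail bound $\|x^k-x^*\|\le\sum_{l>k}\|x^l-x^{l-1}\|\le\frac{b}{a}\chi_k=\frac{bc}{a}r_k^{1-\theta}$, giving R-linear convergence for $\theta=1/2$ and the stated algebraic iterate rate for $\theta\in(1/2,1)$; the iterate Q-superlinearity for $\theta\in(0,1/2)$ is the content of \cite{qian2025superlinear}.

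The main obstacle I anticipate is bookkeeping rather than conceptual. Because the sufficient-decrease and subgradient bounds are only available beyond the finitely many unsuccessful iterations indexed by $\overline{\SS}$, I must pass to a tail $k\ge k_0$ and note that deleting finitely many terms changes neither the finite-length conclusion nor any asymptotic rate. A further technical point, needed to start the descent-method machinery, is to verify that the single certificate $\xi^k$ realizes the error bound $\dist(0,\partial F(x^k))\le b\|x^k-x^{k-1}\|$ with a constant $b$ made uniform through $\gamma_k\le\max\{L_0,L\}$; this is the precise link to \cite{frankel2015splitting,qian2025superlinear}, and it is the one identity I would state explicitly before quoting the abstract convergence theorems.
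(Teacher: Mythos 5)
Your top-level route coincides with the paper's own proof: the paper likewise assembles the sufficient decrease $F(x^k)+\tfrac{\alpha-1}{4}L_0\|x^k-x^{k-1}\|^2\le F(x^{k-1})$, the certificate $\xi^k\in\partial F(x^k)$ with $\|\xi^k\|\le(L+\alpha\max\{L_0,L\})\|x^k-x^{k-1}\|$ (finiteness of the constant coming from $\gamma_k\le\max\{L_0,L\}$, which is exactly where $L$-smoothness is used), the limits $F(x^k)\to F(x^*)$ and $\|x^k-x^{k-1}\|\to0$, the restriction to the tail beyond the finitely many iterations in $\overline{\SS}$, and then invokes the abstract descent results of \cite{frankel2015splitting,qian2025superlinear}. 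Your verification of these hypotheses is correct, so the appeal to the abstract theorems goes through exactly as in the paper.

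However, the ``mechanism'' you sketch for completeness contains a genuine error, and the simplification you advertise as the key point does not exist. For concave $\chi$ the derivative $\chi'$ is nonincreasing, so with $r_k\coloneqq F(x^k)-F(x^*)\le r_{k-1}$, concavity gives $\chi_{k-1}-\chi_k\ge\chi'(r_{k-1})(r_{k-1}-r_k)$, with the derivative evaluated at the \emph{larger} argument; your inequality $\chi_{k-1}-\chi_k\ge\chi'(r_k)(r_{k-1}-r_k)$ is the reverse bound (for instance $\chi(t)=\sqrt{t}$, $r_{k-1}=1$, $r_k=1/4$ gives $1/2$ on the left and $3/4$ on the right). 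With the correct direction, the KL inequality must be applied at $x^{k-1}$, whose subgradient bound involves the \emph{previous} increment, yielding $\chi_{k-1}-\chi_k\ge\frac{a\|x^k-x^{k-1}\|^2}{b\|x^{k-1}-x^{k-2}\|}$; the finite-length argument is therefore inherently two-step and requires Young's inequality, $2\|x^k-x^{k-1}\|\le\|x^{k-1}-x^{k-2}\|+\frac{b}{a}(\chi_{k-1}-\chi_k)$, before telescoping --- which is precisely what \cite{frankel2015splitting} does. Consequently your one-step bound $\|x^k-x^{k-1}\|\le\frac{b}{a}(\chi_{k-1}-\chi_k)$ and the ensuing tail estimate $\|x^k-x^*\|\le\frac{b}{a}\chi_k$ are unjustified as written; the corrected tail bound picks up an extra $\|x^k-x^{k-1}\|$ term, which is harmless for the stated rates because $\|x^k-x^{k-1}\|\le\sqrt{r_{k-1}/a}$. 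By contrast, your function-value recursions (the cases $\theta\in(0,1/2)$, $\theta=1/2$, $\theta\in(1/2,1)$) are sound, since they combine the KL inequality at $x^k$ with the decrease over step $k$ and never use concavity. Since your proof ultimately defers to the abstract theorems with correctly verified hypotheses, the proposal stands; but the sketched mechanism, taken as a self-contained argument, fails at the concavity step, and the claimed structural simplification of this setting over the standard framework is illusory.
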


We note that the superlinear convergence result for lower exponents ($\theta\in(0,1/2)$) have recently appeared in \cite{qian2025superlinear,bento2025convergence,yagishita2025proximal}.

\begin{remark}\label{rem:KL}
Since we obtain from \eqref{eq:decrease-AC-PGM} and the finiteness of $\overline{\SS}$ that
\begin{align}
    \frac{\alpha-1}{4\alpha^2\max\{L_0,L\}}\|R_{\alpha\gamma_k}(x^{k-1})\|^2 &\le F(x^{k-1})-F(x^k)\\
    &\le F(x^{k-1})-F(x^*) \qquad (\because F(x^k)\ge F(x^*)).
\end{align}
for all sufficiently large $k$, convergence rates can also be obtained for the optimality measure.
For example, if $\theta=1/2$, the convergence rate of $\|R_{\alpha\gamma_k}(x^{k-1})\|$ is also linear.
\end{remark}

\subsection{Adaptivity to weak smoothness}
We shall show that Algorithm~\ref{alg:AC-PGM} is adaptive not only to the upper curvature parameter but also to the weak smoothness.
We consider the following weak smoothness assumption.

\begin{assumption}\label{assume:weak-smooth}
    There exist $\nu \in (0,1)$ and $L_\nu >0$ such that
    \begin{equation}\label{eq:wsm}
       f(x) \leq f(y) + \innerprod{\nabla f(y)}{y-x} + \frac{L_\nu}{1+\nu} \|x-y\|^{1+\nu}
    \end{equation}
    holds for any $x,y \in \dom g$.
\end{assumption}

Similar to the descent lemma \eqref{eq:descent-lemma}, a sufficient condition for this assumption is a H\"older continuity of $\nabla f$ on $\convhull(\dom g)$, that is,
\begin{equation}\label{eq:Holder-cont}
    \|\nabla f(x) - \nabla f(y)\| \le L_\nu \|x-y\|^\nu,\quad \forall x,y \in \convhull(\dom g).
\end{equation}
We will use Lemma \ref{lem:gen-bound-AC-PGM} for the convergence analysis under the weak smoothness.
In contrast to the setting where Assumption \ref{assume:L-smoothness} holds, the index set $\overline{\SS}$ of unsuccessful iterations may not be finite in the weakly smooth case.
Nevertheless, the next fact shows that the accumulation term in Lemma~\ref{lem:gen-bound-AC-PGM} can be bounded by a constant.
\begin{lemma}\label{lem:wsm-AC-PGM}
Let $\{x^k\}$ be a sequence generated by Algorithm \ref{alg:AC-PGM} satisfying $x^k\neq x^{k-1}$ for all $k\ge1$.
Suppose that Assumptions~\ref{assume:AC-PGM} and \ref{assume:weak-smooth} hold.
Then, for any $k\geq 1$, we have
\begin{equation}\label{eq:wsm-ACPGM-bound-err}
    \sum_{l \in [k]\cap \overline{\SS}} \frac{\gamma_{l+1}-\gamma_l}{2}\|x^l-x^{l-1}\|^2 \leq C_\nu \coloneqq 
    \frac{1}{1-\beta^{-\frac{1+\nu}{1-\nu}}}\left(\frac{L_\nu}{1+\nu}\right)^{\frac{2}{1-\nu}}\left(\frac{2}{L_0}\right)^{\frac{1+\nu}{1-\nu}}.
\end{equation}
\end{lemma}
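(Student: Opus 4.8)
The plan is to reduce the claim to summing a convergent geometric series, the convergence coming entirely from the fact that $\gamma$ grows multiplicatively along the unsuccessful indices. The engine is a self-bounding inequality: for $l\in\overline{\SS}$ we know from the proof of Lemma~\ref{lem:gen-bound-AC-PGM} that $\gamma_{l+1}=L_l$, and the weak smoothness controls $L_l$ in terms of the step length. First I would apply Assumption~\ref{assume:weak-smooth} (i.e.\ \eqref{eq:wsm}) with $x=x^l$, $y=x^{l-1}$ to bound the numerator in \eqref{eq:estimate-AC-PGM}, obtaining
\[
L_l\le\frac{2L_\nu}{(1+\nu)\,\|x^l-x^{l-1}\|^{1-\nu}}.
\]
Since $\gamma_{l+1}=L_l$ for $l\in\overline{\SS}$, this rearranges into the key estimate $\|x^l-x^{l-1}\|^{1-\nu}\le \tfrac{2L_\nu}{(1+\nu)\gamma_{l+1}}$, hence $\|x^l-x^{l-1}\|^{2}\le\bigl(\tfrac{2L_\nu}{(1+\nu)\gamma_{l+1}}\bigr)^{2/(1-\nu)}$.

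Next I would bound each summand. Using $\gamma_{l+1}-\gamma_l\le\gamma_{l+1}$ together with the displayed bound on $\|x^l-x^{l-1}\|^2$, and writing $p\coloneqq\frac{1+\nu}{1-\nu}$ (so that $1-\tfrac{2}{1-\nu}=-p$ and $\tfrac{2}{1-\nu}-1=p$), a short computation gives
\[
\frac{\gamma_{l+1}-\gamma_l}{2}\|x^l-x^{l-1}\|^2
\le\frac{1}{2}\left(\frac{2L_\nu}{1+\nu}\right)^{\frac{2}{1-\nu}}\gamma_{l+1}^{-p}
=\left(\frac{L_\nu}{1+\nu}\right)^{\frac{2}{1-\nu}}2^{\,p}\,\gamma_{l+1}^{-p}.
\]
It therefore remains to control $\sum_{l\in\overline{\SS}}\gamma_{l+1}^{-p}$.

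To do this I would exploit the multiplicative growth of $\gamma$ on $\overline{\SS}$. Enumerating $\overline{\SS}=\{l_1<l_2<\cdots\}$, the definition of $\overline{\SS}$ gives $\gamma_{l_j+1}=L_{l_j}>\beta\gamma_{l_j}$, while the monotonicity of $\{\gamma_k\}$ from \eqref{eq:stepsize-AC-PGM} gives $\gamma_{l_j}\ge\gamma_{l_{j-1}+1}$. An easy induction, starting from $\gamma_{l_1}\ge\gamma_1=L_0$, then yields $\gamma_{l_j+1}>\beta^{\,j}L_0$, so that $\gamma_{l_j+1}^{-p}<L_0^{-p}\beta^{-jp}$. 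Summing the resulting geometric series $\sum_{j\ge1}\beta^{-jp}=\frac{\beta^{-p}}{1-\beta^{-p}}\le\frac{1}{1-\beta^{-p}}$ and combining with the per-term bound gives
\[
\sum_{l\in[k]\cap\overline{\SS}}\frac{\gamma_{l+1}-\gamma_l}{2}\|x^l-x^{l-1}\|^2
\le\frac{1}{1-\beta^{-\frac{1+\nu}{1-\nu}}}\left(\frac{L_\nu}{1+\nu}\right)^{\frac{2}{1-\nu}}\left(\frac{2}{L_0}\right)^{\frac{1+\nu}{1-\nu}}=C_\nu,
\]
which is exactly the claimed bound (in fact with a spare factor $\beta^{-p}$). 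Convergence of the series requires $\beta^{-p}<1$, which holds because $\beta>1$ and $p>0$ for $\nu\in(0,1)$; this is also what makes $C_\nu$ finite. I expect the main obstacle to be the bookkeeping of the exponents $\tfrac{2}{1-\nu}$ versus $p=\tfrac{1+\nu}{1-\nu}$ so that the constant matches, together with carefully justifying the geometric lower bound $\gamma_{l_j+1}>\beta^{\,j}L_0$ using both the failure condition and the monotonicity; the analytic content beyond that is routine.
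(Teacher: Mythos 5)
Your proof is correct and follows essentially the same route as the paper's: bound $L_l$ via the weak smoothness, use the failure condition $\gamma_{l+1}=L_l>\beta\gamma_l$ to turn this into a per-term bound proportional to a negative power of $\gamma$, and sum the resulting geometric series driven by the multiplicative growth of $\gamma$ along unsuccessful indices. The only cosmetic difference is that you work directly with the step lengths $\|x^l-x^{l-1}\|$ indexed by $\gamma_{l+1}$, whereas the paper routes the same estimates through the gradient-mapping residue $\|R_{\alpha\gamma_l}(x^{l-1})\|$ and per-iteration H\"older estimates $L_{\nu,l}$; your version even retains a spare factor $\beta^{-\frac{1+\nu}{1-\nu}}$.
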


\begin{proof}
Define the H\"older coefficient estimate $L_{\nu,k}$ so that
$$
    f(x^k) - f(x^{k-1}) - \innerprod{\nabla f(x^{k-1})}{x^k-x^{k-1}} = \frac{L_{\nu,k}}{1+\nu} \|x^k-x^{k-1}\|^{1+\nu}.
$$
The constant $L_{\nu,k} \in \RR$ is well-defined since $x^k \ne x^{k-1}$.
Clearly, $L_{\nu,k}\leq L_\nu$ holds.
Moreover, we have the expression of $L_k$ in \eqref{eq:estimate-AC-PGM} using $\nu$ and $L_{\nu,k}$ as follows.
\begin{equation}\label{eq:L_k-univ}
    L_k = \frac{2}{1+\nu}L_{\nu,k}\frac{1}{\|x^k-x^{k-1}\|^{1-\nu}} = \frac{2}{1+\nu}\frac{L_{\nu,k}\alpha^{1-\nu}\gamma_k^{1-\nu}}{\|R_{\alpha \gamma_k}(x^{k-1})\|^{1-\nu}}.
\end{equation}
If $k \in \overline{\mathcal{S}}$ then $\beta\gamma_k < L_k = \gamma_{k+1}$ and
\begin{equation}
    0\le \gamma_{k+1}-\gamma_k = L_k-\gamma_k \stackrel{\eqref{eq:L_k-univ}}{=} \gamma_k^{1-\nu}\left(\frac{2}{1+\nu}\frac{L_{\nu,k}\alpha^{1-\nu}}{\|R_{\alpha \gamma_k}(x^{k-1})\|^{1-\nu}} - \gamma_k^\nu \right).
\end{equation}
Rearranging this inequality and using $L_{\nu,k}\leq L_\nu$, we obtain the following bound on the residue.
\begin{equation}\label{eq:wsm-residue-bound}
    \|R_{\alpha\gamma_k}(x^{k-1})\| \leq \left(\frac{2}{1+\nu}\frac{L_{\nu}\alpha^{1-\nu}}{\gamma_k^\nu}\right)^{\frac{1}{1-\nu}},\quad \forall k \in \overline{\SS}.    
\end{equation}
Therefore, it follows that
\begin{align}
\sum_{l \in [k]\cap\overline{\mathcal{S}}} \frac{\gamma_{l+1}-\gamma_l}{2}\|x^l-x^{l-1}\|^2 &= \sum_{l \in [k]\cap\overline{\mathcal{S}}} \frac{L_l-\gamma_l}{2}\frac{\|R_{\alpha\gamma_l}(x^{l-1})\|^2}{\alpha^2\gamma_l^2}
\leq 
\sum_{l \in [k]\cap\overline{\mathcal{S}}} \frac{L_l}{2}\frac{\|R_{\alpha\gamma_l}(x^{l-1})\|^2}{\alpha^2\gamma_l^2} \nonumber\\
&\stackrel{\eqref{eq:L_k-univ}}{=}
\sum_{l \in [k]\cap\overline{\mathcal{S}}} \frac{\|R_{\alpha\gamma_l}(x^{l-1})\|^2}{2\alpha^2\gamma_l^2} \frac{2}{1+\nu}\frac{L_{\nu,l}\alpha^{1-\nu}\gamma_l^{1-\nu}}{\|R_{\alpha\gamma_l}(x^{l-1})\|^{1-\nu}} \nonumber\\
&=
\frac{1}{(1+\nu)\alpha^{1+\nu}}
\sum_{l \in [k]\cap\overline{\mathcal{S}}} \frac{L_{\nu,l}\|R_{\alpha\gamma_l}(x^{l-1})\|^{1+\nu}}{\gamma_l^{1+\nu}} \nonumber\\
&\leq
2^{\frac{1+\nu}{1-\nu}} \left(\frac{L_\nu}{1+\nu}\right)^{\frac{2}{1-\nu}}
\sum_{l \in [k]\cap\overline{\mathcal{S}}} 
\frac{1}{\gamma_l^{\frac{1+\nu}{1-\nu}}}
\quad (\because \eqref{eq:wsm-residue-bound} \text{ and } L_{\nu,l}\leq L_\nu) 
\label{eq:wsm-errbd}
\end{align}
It remains to estimate $\sum_{l \in [k]\cap\overline{\mathcal{S}}} \gamma_l^{-\frac{1+\nu}{1-\nu}}$.
Denote $\mu=\frac{1+\nu}{1-\nu}$ and $[k]\cap \overline{\mathcal{S}} = \{k_1,k_2,\ldots,k_s\}$ with $k_j<k_{j+1}$.
By the definition of $\overline{\SS}$, it follows that
$$\beta\gamma_{k_j} < L_{k_j} = \gamma_{1+k_j} \leq \gamma_{k_{j+1}},\quad j=1,\ldots,s-1.$$
Therefore, we obtain
$$
\sum_{l \in [k]\cap\overline{\mathcal{S}}} \frac{1}{\gamma_l^{\mu}}
=
\sum_{j=1}^s \frac{1}{\gamma_{k_j}^{\mu}}
\leq
\sum_{j=1}^{s}\frac{1}{(\beta^{j-1} \gamma_{k_1})^{\mu}} 
\leq
\frac{1}{L_0^{\mu}}\sum_{j=1}^{s}\frac{1}{(\beta^{\mu})^{j-1}}
\leq \frac{1}{L_0^{\mu}} \frac{1}{1-\beta^{-\mu}}.
$$
The assertion follows by combining this and \eqref{eq:wsm-errbd}.
\end{proof}

Now we are ready to establish the convergence result under the weak smoothness.

\begin{theorem}\label{th:conve-rate-weak-sm}
Let $\{x^k\}$ be a sequence generated by Algorithm~\ref{alg:AC-PGM} satisfying $x^k\ne x^{k-1}$ for all $k\geq 1$.
Suppose that Assumptions~\ref{assume:AC-PGM} and \ref{assume:weak-smooth} hold.
Then, for any $k\geq 1$, we have
$$
\min_{l \in [k]} \|R_{\alpha \gamma_l}(x^{l-1})\| \leq
\alpha \cdot \max\left\{ 2\sqrt{\frac{ L_0(\Delta+C_\nu)}{(\alpha-1)k}},~~ 
\left(\frac{2^{1+2\nu}L_\nu}{1+\nu}\right)^{\frac{1}{1+\nu}}\left(\frac{\Delta+C_\nu}{(\alpha-1)k}\right)^{\frac{\nu}{1+\nu}} \right\} = \OO(k^{-\frac{\nu}{1+\nu}}),
$$
where $C_\nu$ is the constant defined in \eqref{eq:wsm-ACPGM-bound-err}.
\end{theorem}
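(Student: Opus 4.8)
The plan is to combine Lemma~\ref{lem:gen-bound-AC-PGM} and Lemma~\ref{lem:wsm-AC-PGM} into a single summable estimate, and then to read off the rate by controlling how large the stepsize parameters $\gamma_l$ can become in terms of the smallest residue. Writing $r_l\coloneqq\|R_{\alpha\gamma_l}(x^{l-1})\|$ and $R\coloneqq\min_{l\in[k]}r_l$, Lemma~\ref{lem:gen-bound-AC-PGM}, the inequality $F(x^k)\ge F^*$, and the constant bound of Lemma~\ref{lem:wsm-AC-PGM} together give
\[
\sum_{l\in[k]}\frac{r_l^2}{\gamma_l}\le B,\qquad B\coloneqq\frac{4\alpha^2(\Delta+C_\nu)}{\alpha-1}.
\]

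The crucial step is to bound the nondecreasing sequence $\{\gamma_l\}$ in terms of $R$. Starting from the expression \eqref{eq:L_k-univ} for $L_l$ and using $L_{\nu,l}\le L_\nu$ together with $r_l\ge R$ for $l\in[k]$, I would obtain
\[
L_l\le\gamma_*^{\nu}\gamma_l^{1-\nu},\qquad
\gamma_*\coloneqq\left(\frac{2L_\nu\alpha^{1-\nu}}{(1+\nu)R^{1-\nu}}\right)^{1/\nu}.
\]
Because $\nu<1$, the value $\gamma_*$ is a stable fixed point of the map $\gamma\mapsto\gamma_*^\nu\gamma^{1-\nu}$, so a short induction on $\gamma_{l+1}=\max\{\gamma_l,L_l\}$ starting from $\gamma_1=L_0$ would establish the uniform bound $\gamma_l\le\Gamma\coloneqq\max\{L_0,\gamma_*\}$ for all $l\in[k]$.

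Given this bound, each term satisfies $r_l^2/\gamma_l\ge R^2/\Gamma$, so the summable estimate yields $kR^2/\Gamma\le B$, i.e.\ $R^2\le B\Gamma/k$. I would then split into two cases. If $\Gamma=L_0$, this gives $R\le\sqrt{BL_0/k}=2\alpha\sqrt{L_0(\Delta+C_\nu)/((\alpha-1)k)}$, the first term of the maximum. If $\Gamma=\gamma_*$, the inequality $R^2\le(B/k)\gamma_*(R)$ is self-referential; substituting the definition of $\gamma_*$ and solving for $R$ (so that the exponent of $R$ collects to $(1+\nu)/\nu$) yields $R\le\alpha\bigl(2^{1+2\nu}L_\nu/(1+\nu)\bigr)^{1/(1+\nu)}\bigl((\Delta+C_\nu)/((\alpha-1)k)\bigr)^{\nu/(1+\nu)}$, the second term. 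Since $R$ obeys the bound of whichever case is active, it is dominated by the maximum of the two, which is the claim.

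I expect the main obstacle to be the uniform stepsize bound $\gamma_l\le\Gamma$: the induction must close in both regimes $\Gamma=L_0$ and $\Gamma=\gamma_*$ (using $\gamma_*^\nu L_0^{1-\nu}\le L_0$ when $\gamma_*\le L_0$, and $\gamma_*^\nu\gamma_*^{1-\nu}=\gamma_*$ otherwise), and one must be careful that $R$ is the minimum over exactly the index set $[k]$ driving the induction. The concluding algebra that solves the self-referential inequality is routine, but it requires attention in combining the fractional exponents of $\alpha$, of $2$, and of the remaining factors to recover precisely the constant in the statement.
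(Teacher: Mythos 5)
Your proof is correct and takes essentially the same route as the paper: both combine Lemma~\ref{lem:gen-bound-AC-PGM} with Lemma~\ref{lem:wsm-AC-PGM} to get the summable bound, establish the uniform stepsize estimate $\gamma_l \le \max\{L_0,\gamma_*\}$ with exactly the same self-referential quantity $\gamma_*$ built from the minimum residue, and then resolve the two-case inequality to recover the stated constants. The only cosmetic difference is that the paper obtains the stepsize bound in one shot from $\gamma_k \le \max\{L_0^{\nu},\gamma_*^{\nu}\}\gamma_k^{1-\nu}$ (using monotonicity of $\{\gamma_l\}$), whereas you run an induction on the recursion $\gamma_{l+1}=\max\{\gamma_l,L_l\}$; both arguments prove the identical intermediate fact and lead to the same final bound.
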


\begin{proof}
For $k\geq 1$, we see that
\begin{align*}
\gamma_k &\leq \gamma_{k+1} = \max\{L_0,L_1,\ldots,L_{k}\}
\stackrel{\eqref{eq:L_k-univ}}{=} \max\left\{L_0,\max_{l \in [k]} \frac{2}{1+\nu}\frac{L_{\nu,l}\alpha^{1-\nu}\gamma_l^{1-\nu}}{\|R_{\alpha \gamma_l}(x^{l-1})\|^{1-\nu}}\right\}\\
&\leq \max\left\{L_0,\left(\max_{l \in [k]} \frac{2}{1+\nu}\frac{L_{\nu,l}\alpha^{1-\nu}}{\|R_{\alpha \gamma_l}(x^{l-1})\|^{1-\nu}}\right) \gamma_{k}^{1-\nu}  \right\} \qquad (\because \text{the monotonicity of }\{\gamma_l\})\\
& \leq \max\left\{L_0^\nu,
 \frac{2}{1+\nu}\frac{L_{\nu}\alpha^{1-\nu}}{\min_{l \in [k]}\|R_{\alpha \gamma_l}(x^{l-1})\|^{1-\nu}} \right\}\gamma_{k}^{1-\nu} \qquad (\because  L_0 = L_0^\nu L_0^{1-\nu}\leq L_0^\nu\gamma_k^{1-\nu}  \text{ and }  L_{\nu,l} \leq L_\nu)  
\end{align*}
Therefore, the following upper bound on $\gamma_k$ is obtained for all $k\geq 1$.
$$
\gamma_k \leq \max\left\{L_0, \left(\frac{2L_\nu\alpha^{1-\nu}}{1+\nu}\right)^{\frac{1}{\nu}} \frac{1}{\min_{l \in [k]} \|R_{\alpha \gamma_l}(x^{l-1})\|^{\frac{1-\nu}{\nu}} }\right\}.
$$
Using this, it follows that
\begin{equation}\label{eq:wsm-resdue-lb}
\frac{1}{\gamma_k} \min_{l \in [k]}\|R_{\alpha \gamma_l}(x^{l-1})\|^2
\geq
\min\left\{
	\frac{\min_{l \in [k]}\|R_{\alpha \gamma_l}(x^{l-1})\|^2}{L_0}, \left(\frac{2L_\nu\alpha^{1-\nu}}{1+\nu}\right)^{-\frac{1}{\nu}} \min_{l \in [k]}\|R_{\alpha \gamma_l}(x^{l-1})\|^{\frac{1+\nu}{\nu}}
\right\}.
\end{equation}
Combining Lemmas \ref{lem:gen-bound-AC-PGM}, \ref{lem:wsm-AC-PGM}, and \eqref{eq:wsm-resdue-lb}, we conclude that
$$
\min_{l \in [k]}\|R_{\alpha \gamma_l}(x^{l-1})\| \le \max\left\{ \sqrt{\frac{4\alpha ^2 L_0(\Delta+C_\nu)}{(\alpha-1)k}},~~ \left(\frac{4\alpha^2\left(\frac{2L_\nu\alpha^{1-\nu}}{1+\nu}\right)^{1/\nu}(\Delta+C_\nu)}{(\alpha-1)k}\right)^{\frac{\nu}{1+\nu}} \right\}.
$$
This completes the proof.
\end{proof}

Theorem \ref{th:conve-rate-weak-sm} is a ``universal'' result in the sense that it is adaptive to every acceptable H\"older exponent $\nu \in (0,1]$.
In particular, the iteration complexity to achieve $\|R_{\alpha \gamma_k}(x^{k-1})\|\leq \varepsilon$ is bounded by
$$
\OO\left(\inf_{\nu \in (0,1)}\max\left\{
\frac{L_0}{\varepsilon^2},
\frac{L_\nu^\frac{1}{\nu}}{\varepsilon^{\frac{1+\nu}{\nu}}}
\right\}(\Delta+C_\nu)\right).
$$
This bound is guaranteed under Assumption \ref{assume:weak-smooth}, which is weaker than the H\"older continuity of $\nabla f$. Below, let $\nabla f$ satisfy the H\"older continuity \eqref{eq:Holder-cont}.
The universality behavior is similar to the linesearch-free method \cite{oikonomidis24adaptive} established in convex setting.
Excepting the constant $C_\nu$ corresponding to the accumulation term of unsuccessful iterations, the factor $\OO(L_\nu^{\frac{1}{\nu}}\Delta/\varepsilon^{\frac{1+\nu}{\nu}})$ matches the best-known iteration complexity bound guaranteed by the first-order methods in the presence of the weak smoothness \cite{yashtini2016holder,cartis17regularization,dvurechensky17inexact}. Note that the first-order methods in \cite{cartis17regularization,dvurechensky17inexact} are based on backtracking (or trust-region) strategy and possess the universality.

\subsection{Extension to Bregman proximal gradient method}
Let us consider extending the AC-PGM to the Bregman proximal gradient method for the composite problem \eqref{problem:composite}.
We first define the Bregman divergence.
Let $h:\EE\to(-\infty,\infty]$ be a lower semicontinuous strictly convex function being continuously differentiable on $\CC\coloneqq\interior\dom h$.
Then the Bregman divergence generated by the kernel $h$ is defined by
\begin{align}
    D_h(x,y)\coloneqq\begin{cases}
        h(x)-h(y)-\innerprod{\nabla h(y)}{x-y}, \quad&y\in\CC,\\
        \infty, &y\notin\CC.
    \end{cases} 
\end{align}
By the strict convexity of $h$, $D_h(x,y)=0$ if and only if $x=y$.

The the Bregman proximal gradient method iterates
\begin{equation}\label{eq:Bregman-prox-grad-map}
    x^+\in\argmin_{y\in\EE}\left\{\innerprod{\nabla f(x)}{y}+\gamma D_h(y,x)+g(y)\right\},
\end{equation}
where $\gamma>0$ and $x\in\CC\cap\dom g$.
When choosing $h(x)=\frac{1}{2}\|x\|^2$, the iteration \eqref{eq:Bregman-prox-grad-map} reduces to that of the standard proximal gradient method.
Assuming $x^+\in\CC$ and defining $R_\gamma(x)\coloneqq\gamma(x-x^+)$, as in the case of the proximal gradient method, $R_\gamma(x)=0$ implies the stationarity of $x^+$.
To ensure the well-definedness of the iterations of the Bregman proximal gradient method, we make the following assumption.

\begin{assumption}\label{assume:AC-BPGM}
For any $\gamma>0,~ x\in\CC\cap\dom g$,
\begin{equation}
    \argmin_{y\in\EE}\left\{\innerprod{\nabla f(x)}{y}+\gamma D_h(y,x)+g(y)\right\}
\end{equation}
is nonempty and included in $\CC\cap\dom g$.
\end{assumption}

We introduce the relative smoothness condition as follows.
This property is often assumed in the analysis of the Bregman proximal gradient method \citep{van2017forward,bauschke2017descent,lu2018relatively}, which is an extension of the descent lemma.

\begin{assumption}\label{assume:relative-L-smoothness}
There exists $L_h>0$ such that
\begin{equation}\label{eq:relative-descent-lemma}
    f(x)\le f(y)+\innerprod{\nabla f(y)}{x-y}+L_hD_h(x,y)
\end{equation}
holds for any $x,y\in\CC\cap\dom g$.
\end{assumption}

The function $f$ is said to be $L_h$-smooth relative to $h$ if Assumption \ref{assume:relative-L-smoothness} holds.
When $h(x)=\frac{1}{2}\|x\|^2$, Assumption \ref{assume:relative-L-smoothness} coincides with the standard descent lemma \eqref{eq:descent-lemma}.

The auto-conditioned Bregman proximal gradient method (AC-BPGM) is summarized in Algorithm \ref{alg:AC-BPGM}.

\begin{algorithm}[H]
\caption{Auto-conditioned Bregman proximal gradient method (AC-BPGM)}
    \label{alg:AC-BPGM}
    \begin{algorithmic}
    \STATE {\bfseries Input:} $x^0\in\CC\cap\dom g,~ \alpha>1,~ L_0>0$, and $k=1$.
    \REPEAT
    \STATE Compute
    \begin{align}
        \gamma_k &=\max\{L_0,\ldots,L_{k-1}\},\label{eq:stepsize-AC-BPGM}\\
        x^k &\in\argmin_{y\in\EE}\left\{\innerprod{\nabla f(x^{k-1})}{y}+\alpha\gamma_kD_h(y,x^{k-1})+g(y)\right\},\label{eq:iterate-AC-BPGM}\\
        L_k &=\frac{f(x^k)-f(x^{k-1})-\innerprod{\nabla f(x^{k-1})}{x^k-x^{k-1}}}{D_h(x^k,x^{k-1})}.\label{eq:estimate-AC-BPGM}
    \end{align}
    \STATE Set $k\leftarrow k+1$.
    \UNTIL Termination criterion is satisfied.
    \end{algorithmic}
\end{algorithm}

The AC-BPGM is the generalization of the standard AC-PGM in which the determination of $L_k$ is adapted to the relative smoothness.
As in the AC-PGM, if $x^k=x^{k-1}$, it means that $x^k$ is an l-stationary point; otherwise, $L_k$ is well-defined.

Setting $\beta=\frac{\alpha+1}{2}>1$ and defining $\SS$ in the same way as in the AC-PGM (see \eqref{eq:S}), Assumption \ref{assume:relative-L-smoothness} ensures that the following similar properties hold for the AC-BPGM as well:
\begin{align}
    L_k &\le L_h,\\
    L_0 &=\gamma_1\le\gamma_2\le\cdots\le\max\{L_0,L_h\},\\
    |\overline{\SS}| &\le\left\lceil\log_\beta\frac{\max\{L_0,L_h\}}{L_0}\right\rceil.\label{eq:failure-bound-AC-BPGM}
\end{align}

As in Lemma \ref{lem:gen-bound-AC-PGM}, we have the following lemma.
The proof is omitted since it is similar.

\begin{lemma}\label{lem:gen-bound-AC-BPGM}
Let $\{x^k\}$ be a sequence generated by Algorithm \ref{alg:AC-BPGM} satisfying $x^k\neq x^{k-1}$ for all $k\ge1$.
Suppose that Assumption \ref{assume:AC-BPGM} holds.
Then, we have for any $k \geq 1$ that
\begin{equation}\label{eq:gen-bound-AC-BPGM}
\frac{\alpha-1}{2}\sum_{l\in[k]}\gamma_lD_h(x^l,x^{l-1})\le F(x^0)-F(x^k) + \sum_{l\in[k]\cap\overline{\SS}}(\gamma_{l+1}-\gamma_l)D_h(x^l,x^{l-1}).
\end{equation}
\end{lemma}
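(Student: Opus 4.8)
The plan is to mirror the proof of Lemma~\ref{lem:gen-bound-AC-PGM} line by line, replacing the quadratic term $\tfrac12\|x^k-x^{k-1}\|^2$ by the Bregman divergence $D_h(x^k,x^{k-1})$. First I would exploit the optimality of $x^k$ in the subproblem \eqref{eq:iterate-AC-BPGM}: since $x^k$ minimizes that objective, evaluating it at the feasible point $y=x^{k-1}$ and using $D_h(x^{k-1},x^{k-1})=0$ yields $\innerprod{\nabla f(x^{k-1})}{x^k-x^{k-1}}+\alpha\gamma_k D_h(x^k,x^{k-1})+g(x^k)-g(x^{k-1})\le0$. Here Assumption~\ref{assume:AC-BPGM} guarantees $x^k\in\CC\cap\dom g$, so every Bregman evaluation and the substitution below are legitimate.

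Next I would use the definition \eqref{eq:estimate-AC-BPGM} of $L_k$, namely $f(x^k)-f(x^{k-1})-\innerprod{\nabla f(x^{k-1})}{x^k-x^{k-1}}=L_k D_h(x^k,x^{k-1})$, to eliminate the inner product from the previous inequality. This produces the pseudo-decrease inequality $(\alpha\gamma_k-L_k)D_h(x^k,x^{k-1})+F(x^k)-F(x^{k-1})\le0$, the Bregman analogue of \eqref{eq:pseudo-decrease-AC-PGM}; note it carries no factor $1/2$ because \eqref{eq:estimate-AC-BPGM} has no factor of $2$ in its numerator, unlike \eqref{eq:estimate-AC-PGM}.

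Then I would split into the two cases defining $\SS$. For $k\in\SS$, where $\beta\gamma_k\ge L_k$, the pseudo-decrease gives $F(x^{k-1})-F(x^k)\ge(\alpha-\beta)\gamma_k D_h(x^k,x^{k-1})=\tfrac{\alpha-1}{2}\gamma_k D_h(x^k,x^{k-1})$, using $\alpha-\beta=\tfrac{\alpha-1}{2}$. For $k\notin\SS$, one has $\gamma_{k+1}=\max\{L_0,\dots,L_k\}=L_k$, so writing $\alpha\gamma_k-L_k=(\alpha-1)\gamma_k-(\gamma_{k+1}-\gamma_k)$ in the pseudo-decrease gives $(\alpha-1)\gamma_k D_h(x^k,x^{k-1})\le F(x^{k-1})-F(x^k)+(\gamma_{k+1}-\gamma_k)D_h(x^k,x^{k-1})$, which dominates the target coefficient since $\alpha-1\ge\tfrac{\alpha-1}{2}$. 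Summing both cases over $l\in[k]$ and telescoping $\sum_{l\in[k]}(F(x^{l-1})-F(x^l))=F(x^0)-F(x^k)$ produces \eqref{eq:gen-bound-AC-BPGM}, with the extra divergence terms surviving only on $[k]\cap\overline{\SS}$.

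I do not anticipate a genuine obstacle: the argument is essentially a verbatim transcription once the quadratic is replaced by $D_h$ and the factor-of-two bookkeeping in the definition of $L_k$ is tracked (the absence of $R_{\alpha\gamma_k}$ in the statement actually makes it shorter than Lemma~\ref{lem:gen-bound-AC-PGM}). The only points requiring a little care are confirming via Assumption~\ref{assume:AC-BPGM} that each iterate stays in $\CC\cap\dom g$, and checking that the coefficient $\tfrac{\alpha-1}{2}$ retained in \eqref{eq:gen-bound-AC-BPGM} is the smaller of the two case-dependent coefficients, exactly as $\tfrac{\alpha-1}{4\alpha^2}$ was the smaller choice in Lemma~\ref{lem:gen-bound-AC-PGM}.
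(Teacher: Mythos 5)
Your proposal is correct and is precisely the adaptation of the paper's proof of Lemma~\ref{lem:gen-bound-AC-PGM} that the authors had in mind when they omitted the proof of Lemma~\ref{lem:gen-bound-AC-BPGM} as ``similar'': the same optimality-plus-$L_k$ substitution gives the pseudo-decrease inequality, the same $\SS$/$\overline{\SS}$ case split with $\alpha-\beta=\tfrac{\alpha-1}{2}$ handles both cases, and telescoping finishes it. Your bookkeeping of the missing factor $2$ in \eqref{eq:estimate-AC-BPGM} (hence the constant $\tfrac{\alpha-1}{2}$ rather than $\tfrac{\alpha-1}{4}$) is exactly right.
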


Using Lemma \ref{lem:gen-bound-AC-BPGM}, we have the following convergence result of the AC-BPGM.

\begin{theorem}\label{thm:convergence-AC-BPGM}
Let $\{x^k\}$ be a sequence generated by Algorithm \ref{alg:AC-BPGM} satisfying $x^k\neq x^{k-1}$ for all $k\ge1$.
Suppose that Assumptions \ref{assume:AC-BPGM} and \ref{assume:relative-L-smoothness} hold.
Assume further that $h$ is $\sigma$-strongly convex, namely,
\begin{equation}
    D_h(x,y)\ge\frac{\sigma}{2}\|x-y\|^2
\end{equation}
holds for any $x\in\EE$ and $y\in\CC$, where $\sigma>0$.
Then the following assertions hold.
\begin{enumerate}[(i)]
    \item It holds that
    \begin{equation}\label{eq:sublinear-rate-AC-BPGM}
        \min_{1\le l\le k}\|R_{\alpha\gamma_l}(x^{l-1})\| \le \sqrt{\frac{4\alpha^2\max\{L_0,L_h\}(\Delta+C)}{(\alpha-1)\sigma k}}=\OO\left(k^{-\frac{1}{2}}\right)
    \end{equation}
    for all $k\ge1$, where $\Delta\coloneqq F(x^0)-F^*$ and
    \begin{equation}
        C \coloneqq (\max\{L_0,L_h\}-L_0)\max_{l \in \overline{\SS}}D_h(x^l,x^{l-1}) < \infty.
    \end{equation}
    \item The sequence $\{F(x^k)\}$ converges to a certain finite value.
    \item Any accumulation point of $\{x^k\}$ is an l-stationary point of \eqref{problem:composite} if $\CC=\EE$.
\end{enumerate}
\end{theorem}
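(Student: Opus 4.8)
The plan is to transcribe the proof of Theorem~\ref{thm:convergence-AC-PGM} into the Bregman setting, replacing every squared-norm quantity $\|x^l-x^{l-1}\|^2$ by the divergence $D_h(x^l,x^{l-1})$ and invoking the $\sigma$-strong convexity of $h$ only at the two places where a conversion back to the optimality measure $\|R_{\alpha\gamma_l}(x^{l-1})\|$ is required. The engine is Lemma~\ref{lem:gen-bound-AC-BPGM}, supplemented by the three facts $L_k\le L_h$, $\gamma_k\le\max\{L_0,L_h\}$, and the finiteness of $\overline{\SS}$ recorded in \eqref{eq:failure-bound-AC-BPGM}. For assertion~(i), I would start from $R_{\alpha\gamma_l}(x^{l-1})=\alpha\gamma_l(x^{l-1}-x^l)$ and strong convexity to get $\gamma_l D_h(x^l,x^{l-1})\ge\frac{\sigma}{2\alpha^2\gamma_l}\|R_{\alpha\gamma_l}(x^{l-1})\|^2$, so that the left-hand side of \eqref{eq:gen-bound-AC-BPGM} dominates $\frac{(\alpha-1)\sigma}{4\alpha^2}\sum_{l\in[k]}\gamma_l^{-1}\|R_{\alpha\gamma_l}(x^{l-1})\|^2$; bounding $\gamma_l\le\max\{L_0,L_h\}$ and replacing each term by the running minimum produces the factor $\frac{(\alpha-1)\sigma k}{4\alpha^2\max\{L_0,L_h\}}\min_{1\le l\le k}\|R_{\alpha\gamma_l}(x^{l-1})\|^2$. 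On the right, $F(x^0)-F(x^k)\le\Delta$, and the unsuccessful-iteration sum is handled exactly as in the AC-PGM proof: factor out $\max_{l\in\overline{\SS}}D_h(x^l,x^{l-1})$ and telescope $\sum_{l\in[k]}(\gamma_{l+1}-\gamma_l)=\gamma_{k+1}-\gamma_1\le\max\{L_0,L_h\}-L_0$ to obtain the finite constant $C$. Rearranging gives \eqref{eq:sublinear-rate-AC-BPGM}.

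For assertion~(ii), I would first reproduce the Bregman analogue of \eqref{eq:pseudo-decrease-AC-PGM}: combining the optimality of $x^k$ in \eqref{eq:iterate-AC-BPGM} with the definition \eqref{eq:estimate-AC-BPGM} of $L_k$ yields $(\alpha\gamma_k-L_k)D_h(x^k,x^{k-1})+F(x^k)-F(x^{k-1})\le0$. For $k\in\SS$ one has $\alpha\gamma_k-L_k\ge(\alpha-\beta)\gamma_k=\frac{\alpha-1}{2}\gamma_k$, and invoking strong convexity together with $\gamma_k\ge L_0$ gives the sufficient-decrease estimate $F(x^{k-1})-F(x^k)\ge\frac{(\alpha-1)\sigma L_0}{4}\|x^k-x^{k-1}\|^2$. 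Because $\overline{\SS}$ is finite, this holds for all sufficiently large $k$; hence $\{F(x^k)\}$ is eventually nonincreasing and bounded below, so it converges, and telescoping the decrease also yields $\|x^k-x^{k-1}\|\to0$.

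For assertion~(iii) I would run the accumulation-point argument of Theorem~\ref{thm:convergence-AC-PGM}(ii) with Bregman terms. The first-order optimality of \eqref{eq:iterate-AC-BPGM} gives $\xi^k\coloneqq\nabla f(x^k)-\nabla f(x^{k-1})-\alpha\gamma_k(\nabla h(x^k)-\nabla h(x^{k-1}))\in\widehat{\partial}F(x^k)$. Taking a subsequence $\{x^k\}_K\to x^*$ and comparing $x^k$ against $y=x^*$ in the subproblem, then discarding the nonnegative term $-\alpha\gamma_k D_h(x^k,x^{k-1})$, leaves $g(x^k)\le\innerprod{\nabla f(x^{k-1})}{x^*-x^k}+\alpha\gamma_k D_h(x^*,x^{k-1})+g(x^*)$; since $\gamma_k$ is bounded and $D_h(x^*,x^{k-1})\to0$, this gives $\limsup_{k\to_K\infty}g(x^k)\le g(x^*)$, whence lower semicontinuity of $g$ and continuity of $f$ force $F(x^k)\to_K F(x^*)$ and $x^*\in\dom F$. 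Finally $\xi^k\to_K0$ with $\xi^k\in\widehat{\partial}F(x^k)\subset\partial F(x^k)$ yields $0\in\partial F(x^*)$ by the definition of the limiting subdifferential.

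The hard part—and the reason assertion~(iii) carries the extra hypothesis $\CC=\EE$—will be controlling the two Bregman-specific quantities entering the limit: the term $\alpha\gamma_k(\nabla h(x^k)-\nabla h(x^{k-1}))$ inside $\xi^k$, and the divergence $D_h(x^*,x^{k-1})$. When $\CC=\EE$ the kernel gradient $\nabla h$ is globally defined and continuous, so both iterates converging to $x^*$ with $\|x^k-x^{k-1}\|\to0$ force $\nabla h(x^k)-\nabla h(x^{k-1})\to_K0$ and $D_h(x^*,x^{k-1})\to0$, making $\xi^k\to_K0$ and validating the $\limsup$ bound above. If $\CC\subsetneq\EE$, the limit $x^*$ may sit on $\partial\CC$, where $\nabla h$ can blow up (as for the Boltzmann--Shannon entropy kernel) and these two limits may fail; this is precisely why (i) and (ii) hold in general while the stationarity conclusion in (iii) is stated only for $\CC=\EE$.
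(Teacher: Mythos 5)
Your proof is correct and follows essentially the same route as the paper's: assertion (i) via Lemma~\ref{lem:gen-bound-AC-BPGM} combined with the $\sigma$-strong convexity of $h$ and the telescoping bound on the unsuccessful iterations, (ii) via the Bregman sufficient-decrease inequality $(\alpha\gamma_k-L_k)D_h(x^k,x^{k-1})+F(x^k)-F(x^{k-1})\le 0$ holding for all large $k$ by finiteness of $\overline{\SS}$, and (iii) via the same subsequence argument sending $\xi^k\to 0$ along the subsequence. Incidentally, your sign on the subgradient, $\xi^k=\nabla f(x^k)-\nabla f(x^{k-1})-\alpha\gamma_k(\nabla h(x^k)-\nabla h(x^{k-1}))$, is the correct one (the paper's displayed $\xi^k$ carries a sign typo), and your closing discussion of why $\CC=\EE$ is needed in (iii) matches the role this hypothesis plays in the paper's argument.
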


\begin{proof}
(i) We see from Lemma \ref{lem:gen-bound-AC-BPGM} and the strong convexity of $h$ that
\begin{align}
    \frac{(\alpha-1)\sigma}{4\alpha^2\max\{L_0,L_h\}}k\min_{1\le l\le k}\|R_{\alpha\gamma_l}(x^{l-1})\|^2 &\le \frac{(\alpha-1)\sigma}{4\alpha^2}\sum_{l\in[k]}\frac{1}{\gamma_l}\|R_{\alpha\gamma_l}(x^{l-1})\|^2 \qquad (\because \gamma_l \leq \max\{L_0,L_h\})\\
    &=\frac{(\alpha-1)\sigma}{4}\sum_{l\in[k]}\gamma_l\|x^l-x^{l-1}\|^2\\
    &\le\frac{\alpha-1}{2}\sum_{l\in[k]}\gamma_lD_h(x^l,x^{l-1}) \qquad (\because \text{the strong convexity of }h)\\
    &\le F(x^0)-F(x^k)+\sum_{l\in[k]\cap\overline{\SS}}(\gamma_{l+1}-\gamma_l)D_h(x^l,x^{l-1}) \qquad (\because \text{Lemma \ref{lem:gen-bound-AC-BPGM}})\\
    &\le \Delta+\sum_{l\in[k]\cap\overline{\SS}}(\gamma_{l+1}-\gamma_l)D_h(x^l,x^{l-1}) \qquad (\because F(x^k)\ge F^*).
\end{align}
Rearranging this yields
\begin{equation}
    \min_{1\le l\le k}\|R_{\alpha\gamma_l}(x^{l-1})\| \le \sqrt{\frac{4\alpha^2\max\{L_0,L_h\}\{\Delta+\sum_{l\in[k]\cap\overline{\SS}}(\gamma_{l+1}-\gamma_l)D_h(x^l,x^{l-1})\}}{(\alpha-1)\sigma k}}
\end{equation}
The assertion (i) is obtained from this estimate combined with the following bound.
$$
\sum_{l\in[k]\cap\overline{\SS}}(\gamma_{l+1}-\gamma_l)D_h(x^l,x^{l-1}) \leq \max_{l\in\overline{\SS}}D_h(x^l,x^{l-1})\sum_{l=1}^k(\gamma_{l+1}-\gamma_l)
=
\max_{l\in\overline{\SS}}D_h(x^l,x^{l-1}) (\gamma_{k+1}-\gamma_1) \leq C,
$$
where the first inequality follows from the monotonicity of $\{\gamma_l\}$.

(ii) As in the proof of Theorem \ref{thm:convergence-AC-PGM}, the strong convexity of $h$ yields
\begin{equation}\label{eq:sufficient-decrease-AC-BPGM}
    \frac{(\alpha-1)\sigma}{4}L_0\|x^k-x^{k-1}\|^2\le F(x^{k-1})-F(x^k)
\end{equation}
for all sufficiently large $k$.
Thus, we see from the boundedness from below of $F$ that $\{F(x^k)\}$ converges, and hence
\begin{equation}\label{eq:diff-vanish-AC-BPGM}
    \|x^k-x^{k-1}\|\to0.
\end{equation}

(iii) Assume that $\CC=\EE$.
Let $\{x^k\}_K$ be a subsequence of $\{x^k\}$ converging to some point $x^*$.
Then, $\{x^{k-1}\}_K$ also converges to $x^*$.
Since $x^k$ is optimal to the subproblem in \eqref{eq:iterate-AC-BPGM}, we have
\begin{equation}
    \innerprod{\nabla f(x^{k-1})}{x^k-x^*}+\alpha\gamma_kD_h(x^k,x^{k-1})+g(x^k)\le \alpha\gamma_kD_h(x^*,x^{k-1})+g(x^*).
\end{equation}
Since $\{\gamma_k\}$ is bounded and both $\{x^k\}_K$ and $\{x^{k-1}\}_K$ converge to $x^*$, taking the upper limit $k\to_K\infty$ gives
\begin{equation}
    \limsup_{k\to_K\infty}g(x^k)\le g(x^*).
\end{equation}
Combining this with the lower semicontinuity of $g$ and continuity of $f$ yields $F(x^k)\to_KF(x^*)$.
As $\{F(x^k)\}$ converges, we have $\lim_{k\to\infty}F(x^k)=F(x^*)$, and hence $x^*\in\dom F=\dom g$.
From the optimality of $x^k$ in \eqref{eq:iterate-AC-BPGM}, we have
\begin{equation}
    0\in\nabla f(x^{k-1})+\alpha\gamma_k(\nabla h(x^k)-\nabla h(x^{k-1}))+\widehat{\partial}g(x^k),
\end{equation}
which implies
\begin{equation}
    \xi^k \coloneqq \nabla f(x^k)-\nabla f(x^{k-1})+\alpha\gamma_k(\nabla h(x^k)-\nabla h(x^{k-1})) \in \nabla f(x^k) + \widehat{\partial} g(x^k) = \widehat{\partial} F(x^k).
\end{equation}
We see from the boundedness of $\{\gamma_k\}$ and the continuity of $\nabla f$ and $\nabla h$ that $\xi^k\to_K0$, which implies that $0\in\partial F(x^*)$.
\end{proof}

Note that Theorem \ref{thm:convergence-AC-BPGM} is a generalization of Theorem \ref{thm:convergence-AC-PGM}.
In fact, Theorem  \ref{thm:convergence-AC-BPGM} reduces to Theorem  \ref{thm:convergence-AC-PGM} when $h(x)=\frac{1}{2}\|x\|^2$.

\section{Other linesearch-free first-order methods}\label{sec:other-FoM}
In this section, to demonstrate that the auto-conditioned stepsize is a general stepsize strategy, we propose two linesearch-free first-order methods other than the proximal gradient-type methods and conduct convergence analyses.
Specifically, (generalized) conditional gradient method and Riemannian gradient method are examined in Subsections \ref{subsec:CGM} and \ref{subsec:RGM}, respectively.
Although the analyses for each algorithm slightly differ from the AC-PGM, they share the same essential principle.

\subsection{Conditional gradient method}\label{subsec:CGM}
To consider the auto-conditioned conditional gradient method (AC-CGM) for the composite problem \eqref{problem:composite}, in addition to Assumption \ref{assume:L-smoothness}, the following assumptions are made.

\begin{assumption}\label{assume:AC-CGM}
~
\begin{enumerate}[(i)]
    \item $g$ is convex function with bounded domain, namely, $D_g\coloneqq\sup_{x,y\in\dom g}\|x-y\|<\infty$;
    \item For any $x\in\dom g$,
    \begin{equation}\label{eq:subproblem-CGM}
        \argmin_{v\in\mathbb{E}}\left\{\innerprod{\nabla f(x)}{v}+g(v)\right\}
    \end{equation}
    is nonempty.
\end{enumerate}
\end{assumption}

The Frank--Wolfe gap at $x\in\dom g$ is defined by
\begin{equation}
    G(x)\coloneqq\max_{v\in\mathbb{E}}\left\{\innerprod{\nabla f(x)}{x-v}+g(x)-g(v)\right\}\ge0.
\end{equation}
It is easy to see that $G(x)=\innerprod{\nabla f(x)}{x-v^*}+g(x)-g(v^*)$ where $v^*$ is any solution of the subproblem \eqref{eq:subproblem-CGM}.
Thus, the Frank--Wolfe gap is a computable quantity within the algorithm.
It is not hard to see that $G(x^*)=0$ if and only if $x^*$ is an l-stationary point of \eqref{problem:composite} \citep[Theorem 13.6]{beck2017first}.
Therefore, the Frank--Wolfe gap can be used as an optimality measure.
Moreover, $G$ is lower semicontinuous (see, e.g., \citep[Lemma 2.2]{yagishita2025convergence}).

The AC-CGM is summarized in Algorithm \ref{alg:AC-CGM}.

\begin{algorithm}[H]
\caption{Auto-conditioned conditional gradient method (AC-CGM)}
    \label{alg:AC-CGM}
    \begin{algorithmic}
    \STATE {\bfseries Input:} $x^0\in\dom g,~ \alpha>\frac{1}{2},~ L_0>0$, and $k=1$.
    \REPEAT
    \STATE Compute
    \begin{align}
        v^k&=\argmin_{v\in\mathbb{E}}\left\{\innerprod{\nabla f(x^{k-1})}{v}+g(v)\right\}\label{eq:subproblem-AC-CGM},\\
        \gamma_k &=\max\{L_0,\ldots,L_{k-1}\},\label{eq:stepsize-AC-CGM}\\
        G_k &=G(x^{k-1})=\innerprod{\nabla f(x^{k-1})}{x^{k-1}-v^k}+g(x^{k-1})-g(v^k)\label{eq:F-W-gap-AC-CGM},\\
        \tau_k &=\min\left\{1,\frac{G_k}{\alpha\gamma_k\|x^{k-1}-v^k\|^2}\right\},\label{eq:tau-AC-CGM}\\
        x^k &=(1-\tau_k)x^{k-1}+\tau_kv^k,\label{eq:iterate-AC-CGM}\\
        L_k &=\frac{2(f(x^k)-f(x^{k-1})-\innerprod{\nabla f(x^{k-1})}{x^k-x^{k-1}})}{\|x^k-x^{k-1}\|^2}.\label{eq:estimate-AC-CGM}
    \end{align}
    \STATE Set $k\leftarrow k+1$.
    \UNTIL Termination criterion is satisfied.
    \end{algorithmic}
\end{algorithm}

The estimation of $L_k$ \eqref{eq:estimate-AC-CGM} and the determination of $\gamma_k$ \eqref{eq:stepsize-AC-CGM} are the same as in the AC-PGM.
If $v^k=x^{k-1}$, then $G_k=0$; therefore, the algorithm can be terminated before computing $\tau_k$ in \eqref{eq:tau-AC-CGM}.
Otherwise, $\tau_k$ is well-defined and $\tau_k>0$, and hence $L_k$ is also well-defined by the fact that $x^k\neq x^{k-1}$.

Setting $\beta=\alpha+1/2>1$ and defining $\SS$ in the same way as in the AC-PGM (see \eqref{eq:S}), Assumption \ref{assume:L-smoothness} ensures that exactly the same properties:
\begin{align}
    L_k &\le L,\\
    L_0 &=\gamma_1\le\gamma_2\le\cdots\le\gamma_k\le\max\{L_0,L\}, \label{eq:AC-CGM-gamma-bound}\\
    |\overline{\SS}| &\le\left\lceil\log_\beta\frac{\max\{L_0,L\}}{L_0}\right\rceil\label{eq:failure-bound-AC-CGM}
\end{align}
hold for the AC-CGM as well.

The convergence of Algorithm \ref{alg:AC-CGM} is established as follows.

\begin{theorem}\label{thm:convergence-AC-CGM}
Let $\{x^k\}$ be a sequence generated by Algorithm \ref{alg:AC-CGM} satisfying $v^k\neq x^{k-1}$ for all $k\ge1$.
Suppose that Assumptions \ref{assume:L-smoothness} and \ref{assume:AC-CGM} hold.
Then the following assertions hold.
\begin{enumerate}[(i)]
    \item It holds that
    \begin{equation}\label{eq:sublinear-rate-AC-CGM}
        \min_{1\le l\le k}G_l \le \max\left\{\frac{4\alpha L_0\Delta+2C}{(2\alpha-1)L_0k},\sqrt{\frac{\alpha\max\{L_0,L\}D_g^2(4\alpha L_0\Delta+2C)}{(2\alpha-1)L_0k}}\right\}=\OO(k^{-\frac{1}{2}})
    \end{equation}
    for all $k\ge1$, where $\Delta\coloneqq F(x^0)-F^*$ and $C\coloneqq (\max\{L_0,L\}-L_0)\max_{l \in \overline{\SS}}G_l<\infty$.
    \item The sequence $\{F(x^k)\}$ converges to a certain finite value and any accumulation point of $\{x^k\}$ is an l-stationary point of \eqref{problem:composite}.
\end{enumerate}
\end{theorem}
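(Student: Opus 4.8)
The plan is to mirror the AC-PGM analysis, replacing the gradient-mapping decrease by a Frank--Wolfe decrease and tracking $G_l$ as the optimality measure. First I would establish a per-iteration inequality that does not even require Assumption~\ref{assume:L-smoothness}: since $L_l$ in \eqref{eq:estimate-AC-CGM} is defined so that $f(x^l)-f(x^{l-1})-\innerprod{\nabla f(x^{l-1})}{x^l-x^{l-1}}=\tfrac{L_l}{2}\|x^l-x^{l-1}\|^2$, and $x^l-x^{l-1}=\tau_l(v^l-x^{l-1})$, the convexity of $g$ gives $g(x^l)-g(x^{l-1})\le\tau_l(g(v^l)-g(x^{l-1}))$; combining these with the definition of $G_l$ in \eqref{eq:F-W-gap-AC-CGM} yields
\[
F(x^{l-1})-F(x^l)\ \ge\ \tau_l G_l-\frac{L_l}{2}\|x^l-x^{l-1}\|^2\ =:\ \delta_l .
\]
Alongside I would record two consequences of the rule \eqref{eq:tau-AC-CGM}: from $\tau_l\le \tfrac{G_l}{\alpha\gamma_l\|v^l-x^{l-1}\|^2}$ and $\tau_l\le1$ one gets $\|x^l-x^{l-1}\|^2\le \tfrac{G_l}{\alpha\gamma_l}$, and, distinguishing the two branches of the minimum defining $\tau_l$ together with $\gamma_l\le M\coloneqq\max\{L_0,L\}$ and $\|v^l-x^{l-1}\|\le D_g$, one gets $\tau_l G_l\ge \min\{G_l,\,G_l^2/(\alpha M D_g^2)\}$. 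The appearance of this minimum (boundary branch $\tau_l=1$ versus interior branch) is exactly what will produce the $\max$ of a $1/k$ and a $1/\sqrt{k}$ term in \eqref{eq:sublinear-rate-AC-CGM}.

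Next I would turn $\delta_l$ into a genuine decrease plus a controllable error. Writing $\|x^l-x^{l-1}\|^2=\tau_l\cdot\tau_l\|v^l-x^{l-1}\|^2\le \tfrac{\tau_l G_l}{\alpha\gamma_l}$ gives $\tfrac{L_l}{2}\|x^l-x^{l-1}\|^2\le \tfrac{L_l}{2\alpha\gamma_l}\tau_l G_l$, and the coefficient $\tfrac{L_l}{2\alpha\gamma_l}$ is at most $\tfrac{2\alpha+1}{4\alpha}$ precisely when $l\in\SS$ (i.e.\ $L_l\le\beta\gamma_l=(\alpha+\tfrac12)\gamma_l$), which yields $\delta_l\ge\tfrac{2\alpha-1}{4\alpha}\tau_l G_l$. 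For $l\in\overline{\SS}$ I would use $L_l=\gamma_{l+1}$ and split $\tfrac{L_l}{2}\|x^l-x^{l-1}\|^2=\tfrac{\gamma_l}{2}\|x^l-x^{l-1}\|^2+\tfrac{\gamma_{l+1}-\gamma_l}{2}\|x^l-x^{l-1}\|^2$; the first summand is handled as in the successful case, leaving the error $e_l\coloneqq\tfrac{\gamma_{l+1}-\gamma_l}{2}\|x^l-x^{l-1}\|^2$. Hence for every $l$,
\[
\frac{2\alpha-1}{4\alpha}\min\Big\{G_l,\ \frac{G_l^2}{\alpha M D_g^2}\Big\}\ \le\ F(x^{l-1})-F(x^l)+e_l,\qquad e_l=0\ \text{for }l\in\SS .
\]

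Then I would sum over $l\in[k]$, telescope $F$ against $\Delta=F(x^0)-F^*\ge F(x^0)-F(x^k)$, and bound the error: using $\|x^l-x^{l-1}\|^2\le \tfrac{G_l}{\alpha\gamma_l}\le\tfrac{G_l}{\alpha L_0}$, the monotonicity of $\{\gamma_l\}$, $\gamma_1=L_0$, and $\gamma_{k+1}\le M$ from \eqref{eq:AC-CGM-gamma-bound}, I get $\sum_{l\in[k]\cap\overline{\SS}}e_l\le\tfrac{\max_{l\in\overline{\SS}}G_l}{2\alpha L_0}\sum_{l\in[k]}(\gamma_{l+1}-\gamma_l)\le\tfrac{C}{2\alpha L_0}$, with $C$ finite because $\overline{\SS}$ is finite by \eqref{eq:failure-bound-AC-CGM}. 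Since $t\mapsto\min\{t,t^2/(\alpha M D_g^2)\}$ is nondecreasing, each summand is at least its value at $g_*\coloneqq\min_{1\le l\le k}G_l$, so dividing by $k$ gives $\min\{g_*,g_*^2/(\alpha M D_g^2)\}\le \tfrac{4\alpha L_0\Delta+2C}{(2\alpha-1)L_0 k}=:B$. Finally the elementary implication ``$\min\{s,s^2/c\}\le B\Rightarrow s\le\max\{B,\sqrt{cB}\}$'' (with $c=\alpha M D_g^2$) yields assertion (i).

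For assertion (ii) the finiteness of $\overline{\SS}$ makes $e_l=0$ eventually, so $\{F(x^k)\}$ is eventually nonincreasing and, being bounded below, converges; summability of the left-hand side forces $\min\{G_k,G_k^2/(\alpha M D_g^2)\}\to0$, hence $G_k\to0$, and then $\|x^k-x^{k-1}\|^2\le G_k/(\alpha L_0)\to0$. For any subsequence $x^k\to_K x^*$ we also have $x^{k-1}\to_K x^*$, so the lower semicontinuity of $G$ gives $0\le G(x^*)\le\liminf_{k\to_K\infty}G_k=0$; thus $G(x^*)=0$, i.e.\ $x^*$ is an l-stationary point. The main obstacle I anticipate is the bookkeeping forced by the two regimes of $\tau_l$, together with the need to charge the unsuccessful iterations to $G_l$ (rather than to the crude bound $D_g^2$) so that the accumulated error matches the constant $C$; the borderline range $\alpha>\tfrac12$ (instead of $\alpha>1$ as in AC-PGM, the relaxation being afforded by the convexity of $g$ used in the key inequality) must be invoked exactly where the coefficient $\tfrac{2\alpha-1}{4\alpha}$ is kept positive.
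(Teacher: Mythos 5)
Your proposal is correct and follows essentially the same route as the paper's proof: the same descent inequality obtained from the definition of $L_l$ and the convexity of $g$, the same $\beta=\alpha+\tfrac12$ case split over $\SS$ versus $\overline{\SS}$ with the error term charged as $\frac{\gamma_{l+1}-\gamma_l}{2\alpha L_0}G_l$, the same telescoping bound yielding the constant $C$, and the same conversion of $\min\{g_*,g_*^2/c\}\le B$ into the stated $\max$ bound. The only differences are cosmetic (you unify the two branches of $\tau_l$ via $\tau_l^2\|v^l-x^{l-1}\|^2\le \tau_l G_l/(\alpha\gamma_l)$ before the case split, and in part (ii) you pass through $x^{k-1}\to_K x^*$ rather than applying lower semicontinuity along $\{x^k\}_K$ directly), neither of which changes the substance.
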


\begin{proof}
(i) We obtain from \eqref{eq:estimate-AC-CGM} and Assumption \ref{assume:AC-CGM} (i) that
\begin{align}
    F(x^k) &\stackrel{\eqref{eq:estimate-AC-CGM}}{=}f(x^{k-1})+\innerprod{\nabla f(x^{k-1})}{x^k-x^{k-1}}+\frac{L_k}{2}\|x^k-x^{k-1}\|^2+g(x^k)\\
    &\le f(x^{k-1})+\innerprod{\nabla f(x^{k-1})}{x^k-x^{k-1}}+\frac{L_k}{2}\|x^k-x^{k-1}\|^2+(1-\tau_k)g(x^{k-1})+\tau_kg(v^k) \quad (\because \text{the convexity of }g)\\
    &=F(x^{k-1})-\tau_kG_k+\frac{L_k\tau_k^2}{2}\|x^{k-1}-v^k\|^2.
\end{align}
If $\tau_k=1$, which is equivalent to $G_k\ge\alpha\gamma_k\|x^{k-1}-v^k\|^2$, then
\begin{equation}
    F(x^k)\le F(x^{k-1})-G_k+\frac{L_kG_k}{2\alpha\gamma_k}.
\end{equation}
Otherwise, since $\tau_k=G_k/(\alpha\gamma_k\|x^{k-1}-v^k\|^2)$, we have
\begin{equation}
    F(x^k)\le F(x^{k-1})-\tau_kG_k+\frac{L_k\tau_kG_k}{2\alpha\gamma_k}.
\end{equation}
Combining both cases, it holds that
\begin{equation}\label{eq:pseudo-decrease-AC-CGM}
    \left(1-\frac{L_k}{2\alpha\gamma_k}\right)\tau_kG_k+F(x^k)-F(x^{k-1})\le0.
\end{equation}
If $k\in\SS$, by \eqref{eq:pseudo-decrease-AC-CGM} and the definition of $\SS$, we have 
\begin{align}\label{eq:decrease-AC-CGM}
\begin{split}
    F(x^{k-1})-F(x^k) &\stackrel{\eqref{eq:pseudo-decrease-AC-CGM}}{\ge}\left(1-\frac{L_k}{2\alpha\gamma_k}\right)\tau_kG_k\\
    &\ge\frac{2\alpha-1}{4\alpha}\tau_kG_k \qquad (\because \beta\ge L_k/\gamma_k)\\
    &=\frac{2\alpha-1}{4\alpha}\min\left\{G_k,\frac{G_k^2}{\alpha\gamma_k\|x^{k-1}-v^k\|^2}\right\}\\
    &\ge\frac{2\alpha-1}{4\alpha}\min\left\{G_k,\frac{G_k^2}{\alpha\max\{L_0,L\}D_g^2}\right\},
\end{split}
\end{align}
where the last inequality follows from $\gamma_k\le\max\{L_0,L\}$ and $\|x^{k-1}-v^k\|\le D_g$.
On the other hand, $k\notin\SS$ implies $\gamma_{k+1}=\max\{L_0,\ldots,L_k\}=L_k$. 
Thus, it follows from \eqref{eq:pseudo-decrease-AC-CGM} that
\begin{align} \label{eq:AC-CGM-descent-iter}
\begin{split}
    &\frac{2\alpha-1}{2\alpha}\min\left\{G_k,\frac{G_k^2}{\alpha\max\{L_0,L\}D_g^2}\right\}\\
    &\le\frac{2\alpha-1}{2\alpha}\tau_kG_k \qquad (\because \gamma_k\le\max\{L_0,L\}\text{ and }\|x^{k-1}-v^k\|\le D_g)\\
    &=\left(1-\frac{1}{2\alpha}\right)\tau_kG_k\\
    &\le F(x^{k-1})-F(x^k)+\left(\frac{\gamma_{k+1}}{2\alpha\gamma_k}-\frac{1}{2\alpha}\right)\tau_kG_k \qquad (\because \gamma_{k+1}=L_k\text{ and }\eqref{eq:pseudo-decrease-AC-CGM})\\
    &=F(x^{k-1})-F(x^k)+\frac{\gamma_{k+1}-\gamma_k}{2\alpha\gamma_k}\tau_kG_k\\
    &\le F(x^{k-1})-F(x^k)+\frac{\gamma_{k+1}-\gamma_k}{2\alpha L_0}G_k \qquad (\because L_0\le\gamma_k\text{ and }\tau_k\le1).
\end{split}
\end{align}
By summing up, we have
\begin{align}
    &\frac{2\alpha-1}{4\alpha}k\min\left\{\min_{1\le l\le k}G_l,\frac{\min_{1\le l\le k}G_l^2}{\alpha\max\{L_0,L\}D_g^2}\right\}\\
    &=\frac{2\alpha-1}{4\alpha}k\min_{1\le l\le k}\min\left\{G_l,\frac{G_l^2}{\alpha\max\{L_0,L\}D_g^2}\right\}\\
    &\le\frac{2\alpha-1}{4\alpha}\sum_{l\in[k]}\min\left\{G_l,\frac{G_l^2}{\alpha\max\{L_0,L\}D_g^2}\right\}\\
    &\le F(x^0)-F(x^k)+\sum_{l\in[k]\cap\overline{\SS}}\frac{\gamma_{l+1}-\gamma_l}{2\alpha L_0}G_l \qquad (\because \eqref{eq:decrease-AC-CGM}\text{ and }\eqref{eq:AC-CGM-descent-iter})\\
    &\le \Delta + \frac{\max_{l \in \overline{\SS}}G_l}{2\alpha L_0}\sum_{l=1}^k(\gamma_{l+1}-\gamma_l)\qquad (\because F(x^k)\ge F^* \text{ and the monotonicity of }\{\gamma_l\})\\
    & = \Delta + \frac{\max_{l \in \overline{\SS}}G_l}{2\alpha L_0}(\gamma_{k+1}-\gamma_1)\\
    & \stackrel{\eqref{eq:AC-CGM-gamma-bound}}{\leq} \Delta + \frac{(\max\{L_0,L\}-L_0)\max_{l \in \overline{\SS}}G_l}{2\alpha L_0}
    =\Delta+\frac{C}{2\alpha L_0}.
\end{align}
Rearranging this yields
\begin{equation}
    \min_{1\le l\le k}G_l\le\frac{4\alpha L_0\Delta+2C}{(2\alpha-1)L_0k}
\end{equation}
or
\begin{equation}
    \min_{1\le l\le k}G_l^2\le\frac{\alpha\max\{L_0,L\}D_g^2\{4\alpha L_0\Delta+2C\}}{(2\alpha-1)L_0k}.
\end{equation}
Combining them proves the assertion (i).

(ii) By \eqref{eq:decrease-AC-CGM} and the finiteness of $\overline{\SS}$, it holds that
\begin{equation}\label{eq:sufficient-decrease-AC-CGM}
    \frac{2\alpha-1}{2\alpha}\min\left\{G_k,\frac{G_k^2}{\alpha\max\{L_0,L\}D_g^2}\right\}\le F(x^{k-1})-F(x^k)
\end{equation}
for all sufficiently large $k$.
Thus, we see from the boundedness from below of $F$ that $\{F(x^k)\}$ converges, and hence
\begin{equation}\label{eq:diff-vanish-AC-CGM}
    G_k=G(x^{k-1})\to0.
\end{equation}
Let $\{x^k\}_K$ be a subsequence of $\{x^k\}$ converging to some point $x^*$.
By the lower semicontinuity of $G$, we have
\begin{equation}
    G(x^*)\le\liminf_{k\to\infty, k\in K}G(x^k)=0,
\end{equation}
which is the desired result.
\end{proof}

By adding a few additional assumptions, we obtain the following complexity bound.

\begin{corollary}\label{cor:complexity-AC-CGM}
In addition to the same assumptions as in Theorem \ref{thm:convergence-AC-CGM}, we suppose that $\dom g$ is a closed set and $g$ is continuous on $\dom g$.
Then, Algorithm \ref{alg:AC-CGM} finds an $\varepsilon$-stationary point satisfying $G_k\le\varepsilon$ within
\begin{equation}
     \max\left\{\frac{4\alpha L_0\Delta+2(\max\{L_0,L\}-L_0)G_g}{(2\alpha-1)L_0\varepsilon},\frac{\alpha\max\{L_0,L\}D_g^2\{4\alpha L_0\Delta+2(\max\{L_0,L\}-L_0)G_g\}}{(2\alpha-1)L_0\varepsilon^2}\right\}
\end{equation}
iterations, where $G_g\coloneqq\sup_{x\in\dom g}G(x)$.
\end{corollary}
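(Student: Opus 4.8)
The plan is to reduce everything to the rate estimate already proved in Theorem~\ref{thm:convergence-AC-CGM}(i) and to convert it into an iteration complexity by inverting the two competing terms in the maximum. The only genuinely new ingredient is to replace the \emph{a posteriori} quantity $C=(\max\{L_0,L\}-L_0)\max_{l\in\overline{\SS}}G_l$ appearing there by the \emph{a priori} constant $(\max\{L_0,L\}-L_0)G_g$; everything else is elementary algebra.

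First I would establish that $G_g=\sup_{x\in\dom g}G(x)<\infty$. By Assumption~\ref{assume:AC-CGM}(i) the set $\dom g$ is bounded, and the newly added hypothesis that $\dom g$ is closed makes it compact. On this compact set $\nabla f$ is continuous, hence bounded, because $f$ is continuously differentiable on an open set containing $\dom g$, and $g$ is continuous, hence bounded, by assumption. Writing $G(x)=\innerprod{\nabla f(x)}{x-v}+g(x)-g(v)$ with $v\in\dom g$ a maximizer, I would bound $G(x)\le\bigl(\sup_{x\in\dom g}\|\nabla f(x)\|\bigr)D_g+\bigl(\sup_{\dom g}g-\inf_{\dom g}g\bigr)$ uniformly in $x$, using $\|x-v\|\le D_g$. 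This yields $G_g<\infty$, and since each $x^{l-1}$ lies in $\dom g$ we get $\max_{l\in\overline{\SS}}G_l\le G_g$, whence $C\le(\max\{L_0,L\}-L_0)G_g$.

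Next I would substitute this bound into the estimate of Theorem~\ref{thm:convergence-AC-CGM}(i). Since its right-hand side is a maximum of two terms, $\min_{1\le l\le k}G_l\le\varepsilon$ is guaranteed as soon as each term is at most $\varepsilon$. Inverting the first term $\frac{4\alpha L_0\Delta+2C}{(2\alpha-1)L_0k}\le\varepsilon$ gives $k\ge\frac{4\alpha L_0\Delta+2C}{(2\alpha-1)L_0\varepsilon}$, and squaring and inverting the second gives $k\ge\frac{\alpha\max\{L_0,L\}D_g^2(4\alpha L_0\Delta+2C)}{(2\alpha-1)L_0\varepsilon^2}$. Taking the larger of the two thresholds and replacing $C$ by its upper bound $(\max\{L_0,L\}-L_0)G_g$ (which only enlarges the threshold, hence remains a valid sufficient iteration count) produces exactly the claimed complexity.

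The main obstacle is the finiteness $G_g<\infty$: without it the rate in Theorem~\ref{thm:convergence-AC-CGM} is not an effective complexity bound, because the constant $C$ is only known \emph{a posteriori}. This is precisely where the two extra hypotheses—closedness of $\dom g$ and continuity of $g$—are used, namely to upgrade $\dom g$ to a compact set on which both $\nabla f$ and $g$ are bounded. Once $G_g<\infty$ is in hand, the remaining steps are routine manipulations of the two inequalities.
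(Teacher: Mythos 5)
Your proposal is correct and takes essentially the same route as the paper: the paper's proof likewise observes that closedness plus boundedness makes $\dom g$ compact, so continuity of $\nabla f$ and $g$ gives $G_g<\infty$, and then invokes Theorem~\ref{thm:convergence-AC-CGM} with the bound $C\le(\max\{L_0,L\}-L_0)G_g$. Your version merely spells out the uniform bound on $G(x)$ and the inversion of the two terms in the rate estimate, which the paper leaves implicit.
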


\begin{proof}
Since $\dom g$ is compact and $\nabla f$ and $g$ are continuous on $\dom g$, it holds that
\begin{equation}
    G_g=\sup_{x\in\dom g}G(x)=\sup_{x,v\in\dom g}\left\{\innerprod{\nabla f(x)}{x-v}+g(x)-g(v)\right\}<\infty.
\end{equation}
From Theorem \ref{thm:convergence-AC-CGM} with $C\leq (\max\{L_0,L\}-L_0)G_g$, we have the desired result.
\end{proof}

Note that if $g$ is the indicator function of a compact convex set, then the assumptions of Corollary \ref{cor:complexity-AC-CGM} are satisfied.
The complexity bound in Corollary \ref{cor:complexity-AC-CGM} is dominated by
$$
\OO\left(
\frac{L\Delta D_g^2}{\varepsilon^2} + \frac{L^2G_g D_g^2}{L_0 \varepsilon^2}
\right)
$$
when $L_0<L$.
Moreover, in the case when $L_0\geq L$, the AC-CGM results in the conditional gradient method with well-known stepsize selection $\tau_k=\min\left\{1,\frac{G_k}{\alpha L_0 \|x^{k-1}-v^k\|^2}\right\}$ yielding the complexity guarantee of $\OO(L D_g^2 \Delta/\varepsilon^2)$, which is compatible with known results \cite{lacostejulien2016,braun2023conditional}.

\subsection{Riemannian gradient method}\label{subsec:RGM}
Lastly, we consider solving the following optimization problem
\begin{equation}\label{problem:Riemannian}
    \underset{x\in\MM}{\mbox{minimize}} \quad f(x),
\end{equation}
where $\MM$ is a smooth Riemannian manifold equipped with a Riemann metric $\innerprod{\cdot}{\cdot}_x$ and $f:\MM\to\RR$ is of class $C^1$ and is bounded from below.

We now prepare the notions related to Riemannian manifolds to be used below (see \citep{absil2008optimization,sato2021riemannian,boumal2023introduction} for details).
The tangent space of the manifold $\MM$ at $x$ and the tangent bundle of $\MM$ are denoted by $T_x\MM$ and $T\MM$, respectively.
Let $R:T\MM\to\MM$ be a retraction on $\MM$, that is, for all $x\in\MM$, it holds that (i) $R_x(0_x)=x$ where $0_x$ denotes the zero element of $T_x\MM$, and $\mathrm{D}R_x(0_x)$ is the identity map on $T_x\MM$ where $\mathrm{D}R_x(0_x)$ is the differential of $R_x$ at $0_x$.
The gradient field of $f$ at $x\in\MM$ is denoted by $\grad f(x)\in T_x\MM$.
It is not hard to see that the function $x\mapsto\|\grad f(x)\|_x$ is continuous because $f$ is of class $C^1$, where $\|\xi\|_x\coloneqq\sqrt{\innerprod{\xi}{\xi}_x}$ for $\xi\in T\MM_x$.

We make the following assumptions for the optimization problem \eqref{problem:Riemannian}.

\begin{assumption}\label{assume:L-retraction-smoothness}
There exists $L>0$ such that
\begin{equation}\label{eq:Riemannian-descent-lemma}
    f(R_x(\xi))\le f(x)+\innerprod{\grad f(x)}{\xi}_x+\frac{L}{2}\|\xi\|_x^2
\end{equation}
holds for any $(x,\xi)\in T\MM$.
\end{assumption}

Assumption \ref{assume:L-retraction-smoothness} is called $L$-retraction-smoothness and is often used in the analysis of first-order methods on Riemannian manifolds \citep{boumal2019global,huang2022riemannian,huang2023inexact}.
If $\MM$ is a compact Riemannian submanifold of a Euclidean space $\EE$, then the $L$-smoothness of $f:\EE\to\RR$ on $\convhull(\MM)$ implies the $L$-retraction smoothness of $f_{\mid\MM}$ \citep[Lemma 2.7]{boumal2019global}.
Commonly used manifolds such as the sphere $S^{n-1}\coloneqq\{x\in\RR^n\mid x^\top x=1\}$, and more generally the Stiefel manifold $\mathrm{St}(n,r)\coloneqq\{X\in\RR^{n\times r}\mid X^\top X=I\}$, are compact Riemannian submanifolds of $\RR^n$ and $\RR^{n\times r}$, respectively.

It is known that any local minimizer $x^*$ of \eqref{problem:Riemannian} satisfies $\grad f(x^*)=0_{x^*}$.
We call $x^*\in\MM$ satisfying $\grad f(x^*)=0_{x^*}$ a stationary point of \eqref{problem:Riemannian} and use $\|\grad f(x)\|_{x}$ as an optimality measure.

The auto-conditioned Riemannian gradient method (AC-RGM) is summarized in Algorithm \ref{alg:AC-RGM}.

\begin{algorithm}[H]
\caption{Auto-conditioned Riemannian gradient method (AC-RGM)}
    \label{alg:AC-RGM}
    \begin{algorithmic}
    \STATE {\bfseries Input:} $x^0\in\dom g,~ \alpha>\frac{1}{2},~ L_0>0$, and $k=1$.
    \REPEAT
    \STATE Compute
    \begin{align}
        \gamma_k &=\max\{L_0,\ldots,L_{k-1}\},\label{eq:stepsize-AC-RGM}\\
        \tau_k &=\frac{1}{\alpha\gamma_k},\label{eq:tau-AC-RGM}\\
        x^k &=R_{x^{k-1}}(-\tau_k\grad f(x^{k-1})),\label{eq:iterate-AC-RGM}\\
        L_k &=\frac{2\left(f(x^k)-f(x^{k-1})-\innerprod{\grad f(x^{k-1})}{-\tau_k\grad f(x^{k-1})}_{x^{k-1}}\right)}{\|\tau_k\grad f(x^{k-1})\|_{x^{k-1}}^2}.\label{eq:estimate-AC-RGM}
    \end{align}
    \STATE Set $k\leftarrow k+1$.
    \UNTIL Termination criterion is satisfied.
    \end{algorithmic}
\end{algorithm}

The estimation of $L_k$ \eqref{eq:estimate-AC-RGM} is similar to that of the AC-PGM and AC-CGM.
On the other hand, the determination of $\gamma_k$ \eqref{eq:stepsize-AC-RGM} is exactly the same.
If $\grad f(x^{k-1})=0_{x^{k-1}}$, equivalently, $x^{k-1}$ is a stationary point; therefore, the algorithm can be terminated before computing $L_k$ in \eqref{eq:estimate-AC-RGM}.
Otherwise, $L_k$ is well-defined because of $\grad f(x^{k-1})\neq0_{x^{k-1}}$.

Although the determination of $L_k$ is slightly different from that in the AC-PGM and AC-CGM, by setting $\beta=\alpha+1/2>1$ and defining $\SS$ in the same way as in those cases (see \eqref{eq:S}), the following properties likewise hold for Algorithm \ref{alg:AC-RGM}:
\begin{align}
    L_k &\le L,\\
    L_0 &=\gamma_1\le\gamma_2\le\cdots\le\gamma_k\le\max\{L_0,L\} \label{eq:AC-RGM-gamma-bound},\\
    |\overline{\SS}| &\le\left\lceil\log_\beta\frac{\max\{L_0,L\}}{L_0}\right\rceil.\label{eq:failure-bound-AC-RGM}
\end{align}

Convergence result of the AC-RGM is obtained as follows.

\begin{theorem}\label{thm:convergence-AC-RGM}
Let $\{x^k\}$ be a sequence generated by Algorithm \ref{alg:AC-RGM} satisfying $\grad f(x^{k-1})\neq0_{x^{k-1}}$ for all $k\ge1$.
Suppose that Assumption \ref{assume:L-retraction-smoothness} holds.
Then the following assertions hold.
\begin{enumerate}[(i)]
    \item It holds that
    \begin{equation}\label{eq:sublinear-rate-AC-RGM}
        \min_{1\le l\le k}\|\grad f(x^{l-1})\|_{x^{l-1}} \le \sqrt{\frac{2\max\{L_0,L\}(2\alpha^2L_0^2\Delta+C)}{(2\alpha-1)L_0^2k}}=\OO(k^{-\frac{1}{2}})
    \end{equation}
    for all $k\ge1$, where $f^*\coloneqq\inf_{x\in\MM}f(x)$, $\Delta\coloneqq f(x^0)-f^*$, and
    $$C=(\max\{L_0,L\}-L_0)\max_{l \in \overline{\SS}}\|\grad f(x^{l-1})\|_{x^{l-1}}^2<\infty.$$
    \item The sequence $\{f(x^k)\}$ converges to a certain finite value and any accumulation point of $\{x^k\}$ is a stationary point of \eqref{problem:Riemannian}.
\end{enumerate}
\end{theorem}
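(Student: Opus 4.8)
The plan is to mirror the proof of Theorem~\ref{thm:convergence-AC-PGM}, since the algebraic skeleton of the AC-RGM is identical once the retraction is folded into the definition of $L_k$. First I would unwind \eqref{eq:estimate-AC-RGM} together with $\tau_k=\frac{1}{\alpha\gamma_k}$ and the identity $\innerprod{\grad f(x^{k-1})}{-\tau_k\grad f(x^{k-1})}_{x^{k-1}}=-\tau_k\|\grad f(x^{k-1})\|_{x^{k-1}}^2$ to obtain the pseudo-decrease identity
\begin{equation}
f(x^{k-1})-f(x^k)=\frac{1}{\alpha\gamma_k}\left(1-\frac{L_k}{2\alpha\gamma_k}\right)\|\grad f(x^{k-1})\|_{x^{k-1}}^2,
\end{equation}
which plays the role of \eqref{eq:pseudo-decrease-AC-PGM}. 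This is essentially the only place where the Riemannian structure enters; Assumption~\ref{assume:L-retraction-smoothness} is used exactly to guarantee $L_k\le L$ and hence the bounds \eqref{eq:AC-RGM-gamma-bound} and \eqref{eq:failure-bound-AC-RGM}.

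Next I would split along $\SS$ and $\overline{\SS}$. For $k\in\SS$, the inequality $L_k/\gamma_k\le\beta=\alpha+\frac12$ gives $1-\frac{L_k}{2\alpha\gamma_k}\ge\frac{2\alpha-1}{4\alpha}$, whence the sufficient-decrease bound
\begin{equation}\label{eq:plan-decrease-RGM}
f(x^{k-1})-f(x^k)\ge\frac{2\alpha-1}{4\alpha^2\gamma_k}\|\grad f(x^{k-1})\|_{x^{k-1}}^2.
\end{equation}
For $k\notin\SS$, using $\gamma_{k+1}=L_k$ and rearranging the pseudo-decrease identity, I would isolate
\begin{equation}
\frac{2\alpha-1}{2\alpha^2\gamma_k}\|\grad f(x^{k-1})\|_{x^{k-1}}^2=f(x^{k-1})-f(x^k)+\frac{\gamma_{k+1}-\gamma_k}{2\alpha^2\gamma_k^2}\|\grad f(x^{k-1})\|_{x^{k-1}}^2.
\end{equation}
Summing both cases over $l\in[k]$ with the uniformly weaker coefficient $\frac{2\alpha-1}{4\alpha^2}$ telescopes the objective differences and yields the AC-RGM analogue of Lemma~\ref{lem:gen-bound-AC-PGM},
\begin{equation}
\frac{2\alpha-1}{4\alpha^2}\sum_{l\in[k]}\frac{1}{\gamma_l}\|\grad f(x^{l-1})\|_{x^{l-1}}^2\le f(x^0)-f(x^k)+\sum_{l\in[k]\cap\overline{\SS}}\frac{\gamma_{l+1}-\gamma_l}{2\alpha^2\gamma_l^2}\|\grad f(x^{l-1})\|_{x^{l-1}}^2.
\end{equation}

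For assertion (i), I would bound $\gamma_l\le\max\{L_0,L\}$ on the left to extract $\min_{1\le l\le k}\|\grad f(x^{l-1})\|_{x^{l-1}}^2$, bound $f(x^k)\ge f^*$ to replace $f(x^0)-f(x^k)$ by $\Delta$, and control the error term on the right using $\gamma_l\ge L_0$, the monotonicity of $\{\gamma_l\}$, and the telescoping $\sum_{l\in[k]}(\gamma_{l+1}-\gamma_l)=\gamma_{k+1}-\gamma_1\le\max\{L_0,L\}-L_0$; this produces exactly the quantity $\frac{C}{2\alpha^2L_0^2}$ with $C$ as in the statement. Rearranging and taking square roots then reproduces \eqref{eq:sublinear-rate-AC-RGM}, while the finiteness of $\overline{\SS}$ coming from \eqref{eq:failure-bound-AC-RGM} makes $C$ finite.

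Finally, assertion (ii) is cleaner than in the composite case. From \eqref{eq:plan-decrease-RGM}, the finiteness of $\overline{\SS}$, and $\gamma_k\le\max\{L_0,L\}$, every sufficiently large iteration is successful and satisfies $\frac{2\alpha-1}{4\alpha^2\max\{L_0,L\}}\|\grad f(x^{k-1})\|_{x^{k-1}}^2\le f(x^{k-1})-f(x^k)$; combined with the lower-boundedness of $f$ this shows $\{f(x^k)\}$ converges and $\|\grad f(x^{k-1})\|_{x^{k-1}}\to0$. Since $x\mapsto\|\grad f(x)\|_x$ is continuous (as noted after the retraction is introduced), any accumulation point $x^*$ of $\{x^k\}$ satisfies $\|\grad f(x^*)\|_{x^*}=0$, i.e., $x^*$ is stationary. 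I expect the only delicate part to be the coefficient bookkeeping in the unsuccessful case that places $\gamma_l^2$ in the denominator of the error term---this is what yields the $L_0^2$ factor in the final bound; no genuinely manifold-specific difficulty arises, because the retraction is entirely encapsulated by \eqref{eq:estimate-AC-RGM} and Assumption~\ref{assume:L-retraction-smoothness}.
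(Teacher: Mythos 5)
Your proposal is correct and follows essentially the same route as the paper: the same pseudo-decrease identity, the same split along $\SS$ and $\overline{\SS}$ with $\beta=\alpha+\tfrac12$, the same telescoping and bounding of the error term via $\gamma_l\ge L_0$ and $\gamma_{k+1}-\gamma_1\le\max\{L_0,L\}-L_0$, and the same continuity argument for assertion (ii). The only difference is cosmetic: you state an intermediate summed inequality keeping $\gamma_l$ general (mirroring Lemma~\ref{lem:gen-bound-AC-PGM}) and bound $\gamma_l$ afterwards, whereas the paper inserts the bounds $L_0\le\gamma_l\le\max\{L_0,L\}$ within the per-iteration case analysis before summing; both yield the identical constant in \eqref{eq:sublinear-rate-AC-RGM}.
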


\begin{proof}
(i) We obtain from \eqref{eq:estimate-AC-RGM} that
\begin{align}\label{eq:pseudo-decrease-AC-RGM}
\begin{split}
    f(x^k) &=f(x^{k-1})+\innerprod{\grad f(x^{k-1})}{-\tau_k\grad f(x^{k-1})}_{x^{k-1}}+\frac{L_k}{2}\|\tau_k\grad f(x^{k-1})\|_{x^{k-1}}^2\\
    &=f(x^{k-1})-\tau_k\left(1-\frac{L_k}{2\alpha\gamma_k}\right)\|\grad f(x^{k-1})\|_{x^{k-1}}^2\\
    &=f(x^{k-1})-\frac{1}{\alpha\gamma_k}\left(1-\frac{L_k}{2\alpha\gamma_k}\right)\|\grad f(x^{k-1})\|_{x^{k-1}}^2
\end{split}
\end{align}
If $k\in\SS$, by \eqref{eq:pseudo-decrease-AC-RGM} and the definition of $\SS$, we have 
\begin{align}\label{eq:decrease-AC-RGM}
\begin{split}
    f(x^{k-1})-f(x^k) &\stackrel{\eqref{eq:pseudo-decrease-AC-RGM}}{=}\frac{1}{\alpha\gamma_k}\left(1-\frac{L_k}{2\alpha\gamma_k}\right)\|\grad f(x^{k-1})\|_{x^{k-1}}^2\\
    &\ge\frac{1}{\alpha\gamma_k}\frac{2\alpha-1}{4\alpha}\|\grad f(x^{k-1})\|_{x^{k-1}}^2 \qquad (\because \beta\ge L_k/\gamma_k)\\
    &\ge\frac{2\alpha-1}{4\alpha^2\max\{L_0,L\}}\|\grad f(x^{k-1})\|_{x^{k-1}}^2 \qquad (\because \gamma_k\le\max\{L_0,L\}).
\end{split}
\end{align}
On the other hand, $k\notin\SS$ implies $\gamma_{k+1}=\max\{L_0,\ldots,L_k\}=L_k$. 
Thus, it follows from \eqref{eq:pseudo-decrease-AC-RGM} that
\begin{align}
    &\frac{2\alpha-1}{2\alpha^2\max\{L_0,L\}}\|\grad f(x^{k-1})\|_{x^{k-1}}^2\\
    &=\frac{1}{\alpha\max\{L_0,L\}}\left(1-\frac{1}{2\alpha}\right)\|\grad f(x^{k-1})\|_{x^{k-1}}^2\\
    &\le\frac{1}{\alpha\gamma_k}\left(1-\frac{1}{2\alpha}\right)\|\grad f(x^{k-1})\|_{x^{k-1}}^2 \qquad (\because \gamma_k\le\max\{L_0,L\})\\
    &=f(x^{k-1})-f(x^k)+\frac{1}{\alpha\gamma_k}\left(\frac{\gamma_{k+1}}{2\alpha\gamma_k}-\frac{1}{2\alpha}\right)\|\grad f(x^{k-1})\|_{x^{k-1}}^2 \qquad (\because \gamma_{k+1}=L_k\text{ and }\eqref{eq:pseudo-decrease-AC-RGM})\\
    &\le f(x^{k-1})-f(x^k)+\frac{\gamma_{k+1}-\gamma_k}{2\alpha^2L_0^2}\|\grad f(x^{k-1})\|_{x^{k-1}}^2 \qquad (\because L_0\le\gamma_k).
\end{align}
By summing up, we have
\begin{align}
    &\frac{2\alpha-1}{4\alpha^2\max\{L_0,L\}}k\min_{1\le l\le k}\|\grad f(x^{l-1})\|_{x^{l-1}}^2\\
    &\le \frac{2\alpha-1}{4\alpha^2\max\{L_0,L\}}\sum_{i\in[k]}\|\grad f(x^{l-1})\|_{x^{l-1}}^2\\
    &\le f(x^0)-f(x^k)+\sum_{l\in[k]\cap\overline{\SS}}\frac{\gamma_{l+1}-\gamma_l}{2\alpha^2L_0^2}\|\grad f(x^{l-1})\|_{x^{l-1}}^2\\
    &\le \Delta+\sum_{l\in[k]\cap\overline{\SS}}\frac{\gamma_{l+1}-\gamma_l}{2\alpha^2L_0^2}\|\grad f(x^{l-1})\|_{x^{l-1}}^2 \qquad (\because f(x^k)\ge f^*).
\end{align}
Rearranging this yields
\begin{equation}
    \min_{1\le l\le k}\|\grad f(x^{l-1})\|_{x^{l-1}} \le \sqrt{\frac{\max\{L_0,L\}\{4\alpha^2L_0^2\Delta+2\sum_{l\in[k]\cap\overline{\SS}}(\gamma_{l+1}-\gamma_l)\|\grad f(x^{l-1})\|_{x^{l-1}}^2\}}{(2\alpha-1)L_0^2k}}
\end{equation}
The assertion (i) follows by this inequality combined with the following bound.
\begin{align}
\sum_{l\in[k]\cap\overline{\SS}}(\gamma_{l+1}-\gamma_l)\|\grad f(x^{l-1})\|_{x^{l-1}}^2
&\leq
\max_{l \in \overline{\SS}}\|\grad f(x^{l-1})\|_{x^{l-1}}^2\sum_{l=1}^k(\gamma_{l+1}-\gamma_l) \quad (\because \text{ the monotonicity of } \{\gamma_l\})\\
&=
\max_{l \in \overline{\SS}}\|\grad f(x^{l-1})\|_{x^{l-1}}^2(\gamma_{k+1}-\gamma_1)\leq C\quad (\because \gamma_{k+1}\leq \max\{L_0,L\},\gamma_1=L_0).
\end{align} 

(ii) By the finiteness of $\overline{\SS}$, \eqref{eq:decrease-AC-RGM} holds for all sufficiently large $k$.
Thus, we see from the boundedness from below of $f$ that $\{f(x^k)\}$ converges, and hence
\begin{equation}\label{eq:diff-vanish-AC-RGM}
    \|\grad f(x^{k-1})\|_{x^{k-1}}^2\to0.
\end{equation}
Let $\{x^k\}_K$ be a subsequence of $\{x^k\}$ converging to some point $x^*$.
The continuity of $x\mapsto\|\grad f(x)\|_x$ yields
\begin{equation}
   \|\grad f(x^*)\|_{x^*}=\lim_{k\to\infty, k\in K}\|\grad f(x^k)\|_{x^k}=0,
\end{equation}
which implies that $x^*$ is a stationary point of \eqref{problem:Riemannian}.
\end{proof}

In the presence of the boundedness of the gradient of $f$, we have the following complexity bound as an immediate consequence of Theorem \ref{thm:convergence-AC-RGM}.

\begin{corollary}\label{cor:complexity-AC-RGM}
In addition to the same assumptions as in Theorem \ref{thm:convergence-AC-RGM}, we suppose that
\begin{equation}
    G_f\coloneqq\sup_{x\in\MM}\|\grad f(x)\|_x<\infty.
\end{equation}
Then, Algorithm \ref{alg:AC-RGM} finds an $\varepsilon$-stationary point satisfying $\|\grad f(x^{k-1})\|_{x^{k-1}}\le\varepsilon$ within
\begin{equation}
     \frac{2\max\{L_0,L\}\{2\alpha^2L_0^2\Delta+(\max\{L_0,L\}-L_0)G_f^2\}}{(2\alpha-1)L_0^2\varepsilon^2}
\end{equation}
iterations.
\end{corollary}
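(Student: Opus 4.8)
The plan is to obtain this corollary as an immediate consequence of Theorem~\ref{thm:convergence-AC-RGM}(i), the only new ingredient being that the boundedness assumption $G_f<\infty$ lets us replace the trajectory-dependent constant $C$ by a problem-independent bound. This parallels Corollary~\ref{cor:complexity-AC-CGM}, where the finiteness of $G_g$ plays exactly the same role.

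First I would bound the constant $C$. Since $G_f=\sup_{x\in\MM}\|\grad f(x)\|_x<\infty$, we have $\|\grad f(x^{l-1})\|_{x^{l-1}}^2\le G_f^2$ for every index $l$, and in particular $\max_{l\in\overline{\SS}}\|\grad f(x^{l-1})\|_{x^{l-1}}^2\le G_f^2$. Recalling the definition $C=(\max\{L_0,L\}-L_0)\max_{l\in\overline{\SS}}\|\grad f(x^{l-1})\|_{x^{l-1}}^2$ from Theorem~\ref{thm:convergence-AC-RGM}(i), this yields $C\le(\max\{L_0,L\}-L_0)G_f^2$.

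Next I would substitute this bound into the sublinear rate \eqref{eq:sublinear-rate-AC-RGM}, giving
\begin{equation}
\min_{1\le l\le k}\|\grad f(x^{l-1})\|_{x^{l-1}}\le\sqrt{\frac{2\max\{L_0,L\}\{2\alpha^2L_0^2\Delta+(\max\{L_0,L\}-L_0)G_f^2\}}{(2\alpha-1)L_0^2k}}.
\end{equation}
To guarantee $\|\grad f(x^{l-1})\|_{x^{l-1}}\le\varepsilon$ for some $l\in[k]$, it suffices to force the right-hand side to be at most $\varepsilon$; squaring and solving the resulting inequality for $k$ produces precisely the stated iteration count.

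There is essentially no obstacle here, as the whole argument collapses into the already-established rate together with a one-line uniform bound on the gradient norms. The only conceptual point worth emphasizing is that, without $G_f<\infty$, the constant $C$ would depend on the generated sequence through $\max_{l\in\overline{\SS}}\|\grad f(x^{l-1})\|_{x^{l-1}}^2$; the assumption $G_f<\infty$ is exactly what converts this into an a priori complexity bound depending only on the problem data $\Delta$, $L$, $L_0$, and $G_f$.
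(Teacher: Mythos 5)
Your proposal is correct and matches the paper's intended argument exactly: the paper states the corollary as an immediate consequence of Theorem~\ref{thm:convergence-AC-RGM}, obtained precisely by bounding $C\le(\max\{L_0,L\}-L_0)G_f^2$ via $G_f<\infty$ and solving the rate \eqref{eq:sublinear-rate-AC-RGM} for $k$, just as in the analogous Corollary~\ref{cor:complexity-AC-CGM}.
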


If $\MM$ is compact, then by continuity, the function $x\mapsto\|\grad f(x)\|_x$ is automatically bounded.
Corollary \ref{cor:complexity-AC-RGM} provides an iteration complexity bound  that matches the order of $\varepsilon$ obtained by \citet{boumal2019global} for the Riemannian gradient methods with constant stepsize and with backtracking Armijo linesearch, under Assumption \ref{assume:L-retraction-smoothness}.
Considering the case $\MM=\EE$, Corollary \ref{cor:complexity-AC-RGM} also provides the complexity bound of a linesearch-free steepest descent method for unconstrained smooth optimization problems with bounded gradients.

\section{Numerical examples}\label{sec:numerical}
To demonstrate empirical performance of the auto-conditioned stepsize strategy, we conduct two numerical experiments.
In the first, we make a comparison with a constant stepsize strategy, and in the second, with linesearch strategies.
All the algorithms were implemented in MATLAB R2023b, and all the computations were conducted on a Windows computer with Intel Core i7-1355U 2.60GHz processor and 16GB RAM.

\subsection{Comparison with constant stepsize}
We first compare the AC-PGM with a proximal gradient method employing constant stepsize.
The following regularized logistic regression problem is considered:
\begin{equation}
    \underset{x\in\RR^n}{\mbox{minimize}} \quad \underbrace{\frac{1}{m}\sum_{i\in[m]}-\log(1+e^{-b_i(a_i^\top x)})+\frac{\lambda_1}{2}\|x\|_2^2}_{f(x)}+\underbrace{\lambda_2T_\kappa(x)}_{g(x)},
\end{equation}
where $b_i\in\{-1,1\}$ and $a_i\in\RR^n$ for $i\in[m]$ are the given data, $\lambda_1,\lambda_2>0$ are the regularization parameter, and $\|x\|_2\coloneqq\sqrt{\sum_{i\in[n]}x_i^2}$.
The function $T_\kappa$, referred to as the trimmed $\ell_1$ norm, is defined by
\begin{equation}
    T_\kappa(x)\coloneqq |x_{\langle1\rangle}|+\cdots+|x_{\langle n-\kappa\rangle}|=\min_{\substack{\Lambda\subset[n]\\|\Lambda|=n-\kappa}}\sum_{i\in\Lambda}|x_i|,
\end{equation}
where $|x_{\langle 1\rangle}|\le|x_{\langle 2\rangle}|\le\cdots\le|x_{\langle n\rangle}|$ and $\kappa\in\{1,\ldots,n-1\}$.
The trimmed $\ell_1$ norm is a nonconvex nonsmooth function introduced by \citet{luo2013new} and \citet{huang2015two} to obtain a more clear-cut sparse solution than the $\ell_1$ norm.
The trimmed $\ell_1$ norm is known as an exact penalty function of the cardinality constraint $\|x\|_0\le\kappa$, where $\|x\|_0$ is the number of the nonzero elements of $x$ (see \citep{yagishita2022exact} and references therein).

\begin{figure}[htbp]
  \centering
  \begin{minipage}{0.45\columnwidth}
    \centering
    \includegraphics[width=\columnwidth]{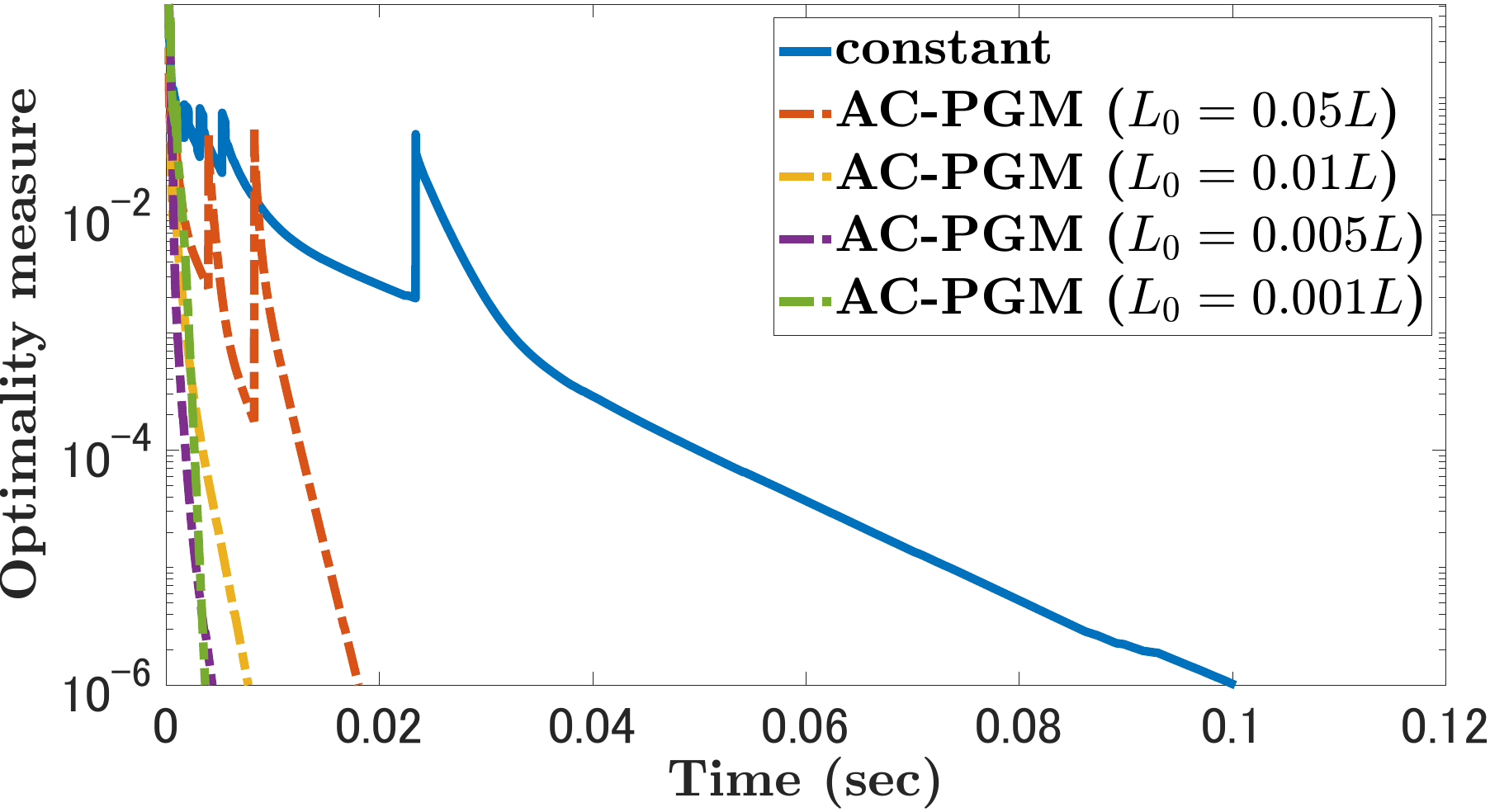}
  \end{minipage}
  \hspace{1em}
  \begin{minipage}{0.45\columnwidth}
    \centering
    \includegraphics[width=\columnwidth]{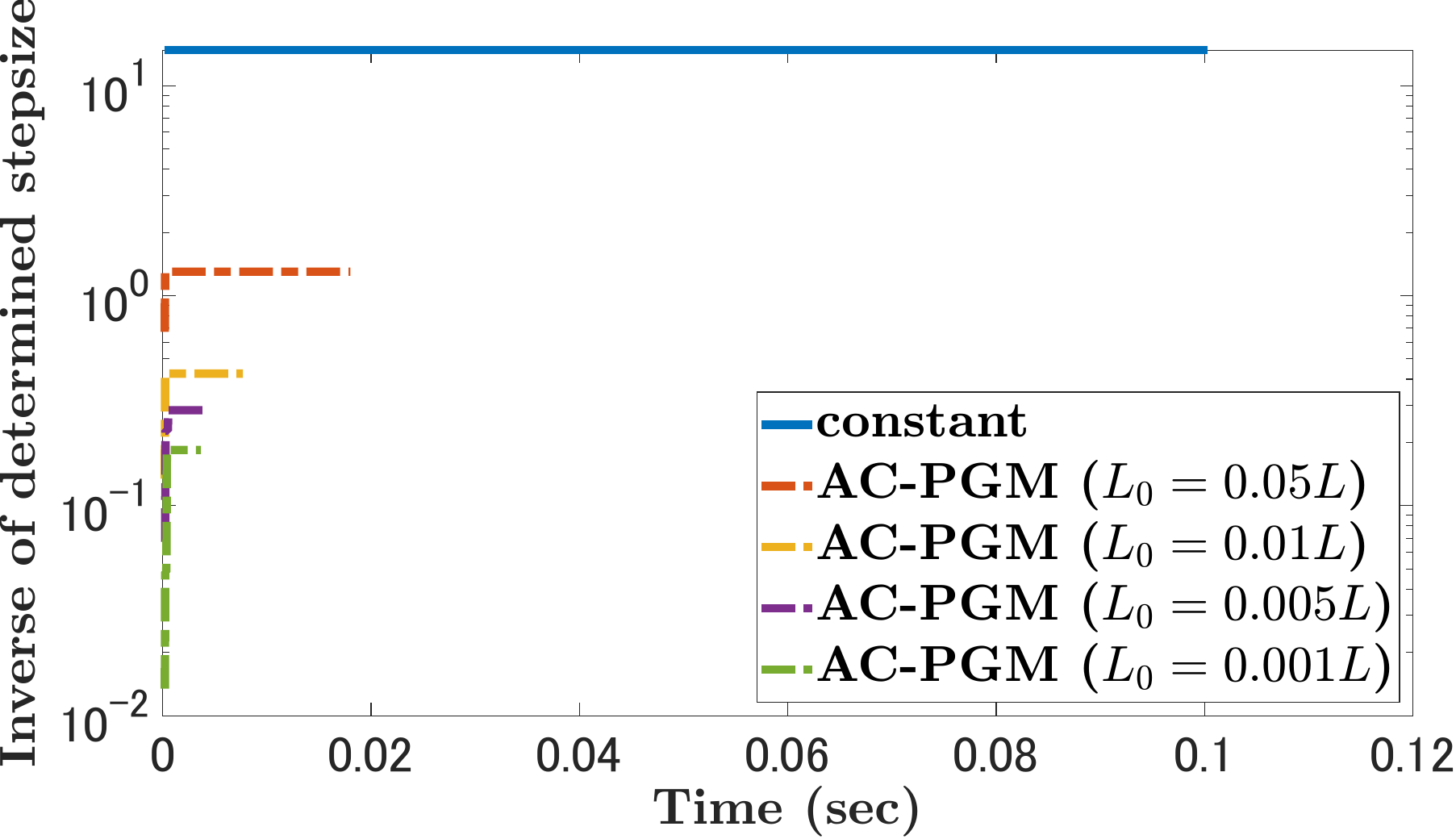}
  \end{minipage}
  
  \vspace{1em}
  (a) Sonar ($m=208, n=60$)
  \vspace{1em}
  
  \begin{minipage}{0.45\columnwidth}
    \centering
    \includegraphics[width=\columnwidth]{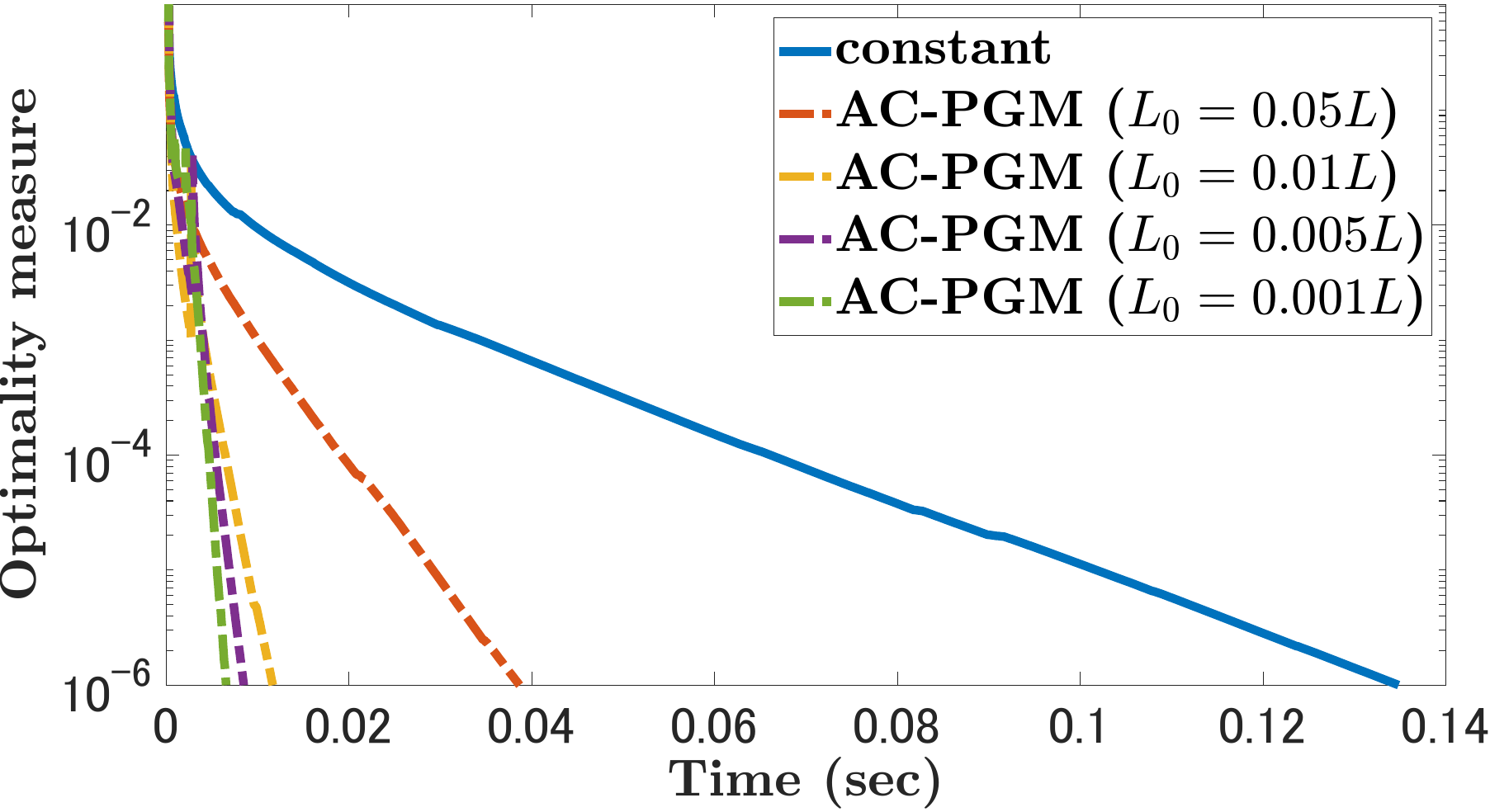}
  \end{minipage}
  \hspace{1em}
  \begin{minipage}{0.45\columnwidth}
    \centering
    \includegraphics[width=\columnwidth]{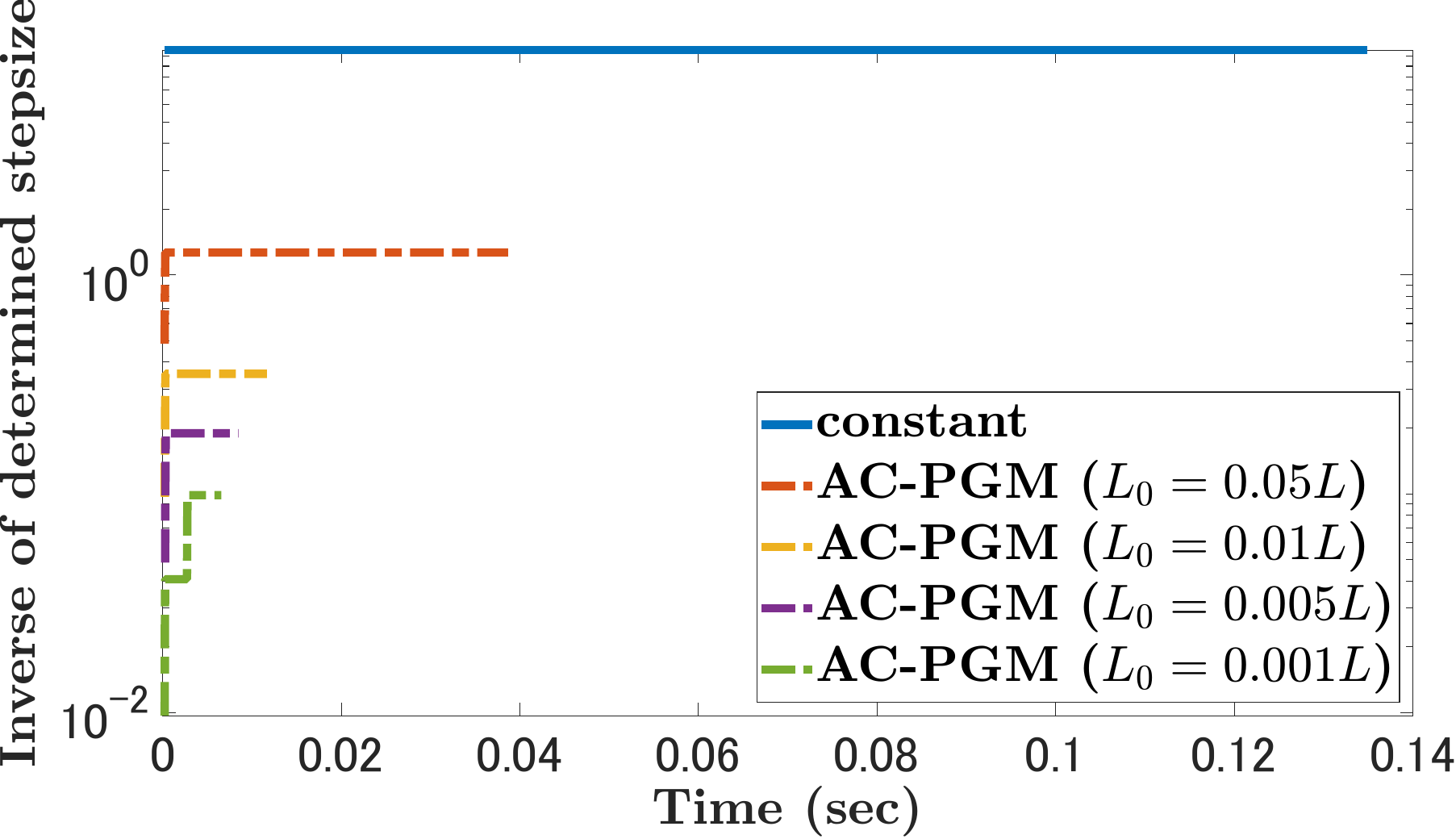}
  \end{minipage}

  \vspace{1em}
  (b) Ionosphere ($m=351, n=33$)
  \vspace{1em}
  
  \begin{minipage}{0.45\columnwidth}
    \centering
    \includegraphics[width=\columnwidth]{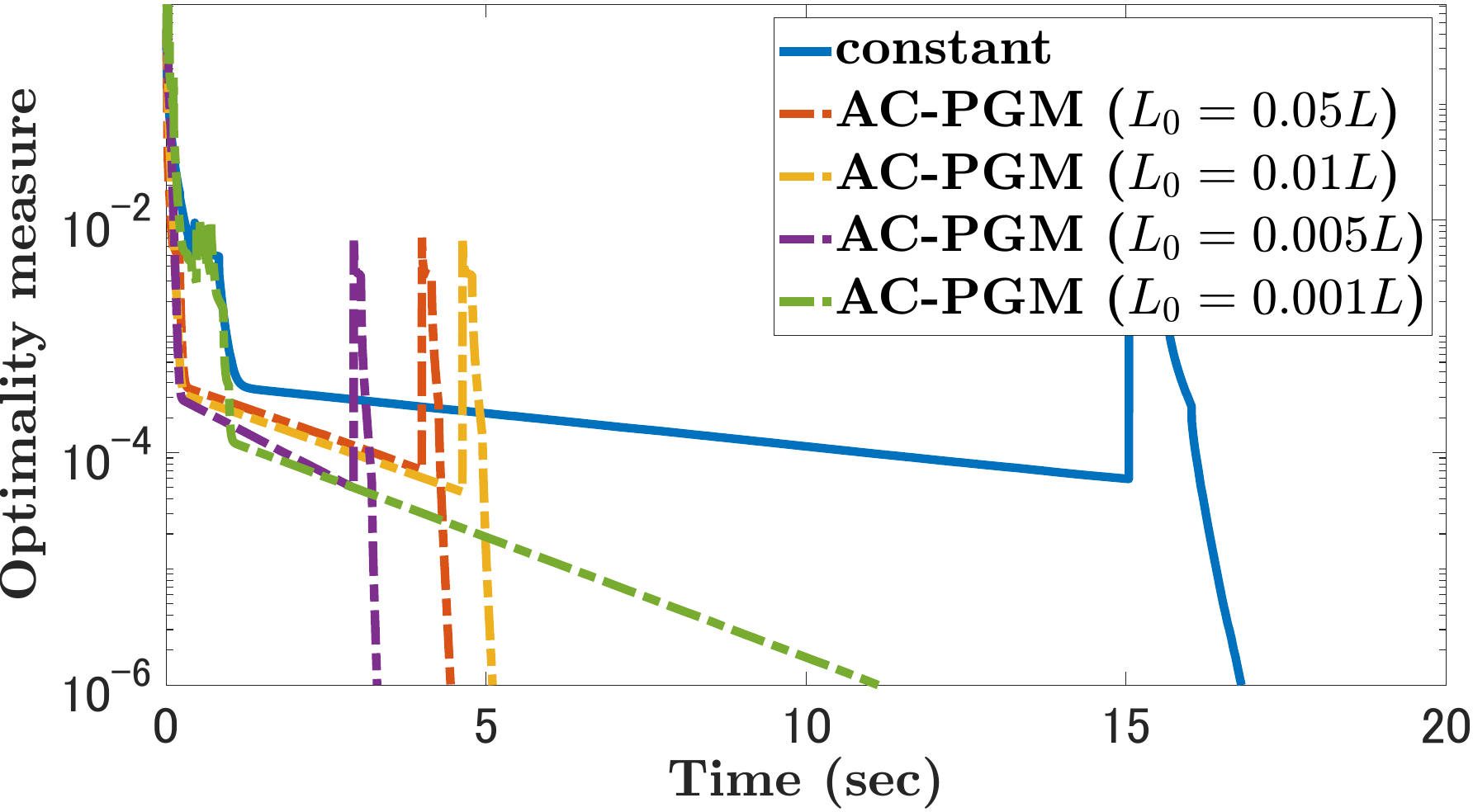}
  \end{minipage}
  \hspace{1em}
  \begin{minipage}{0.45\columnwidth}
    \centering
    \includegraphics[width=\columnwidth]{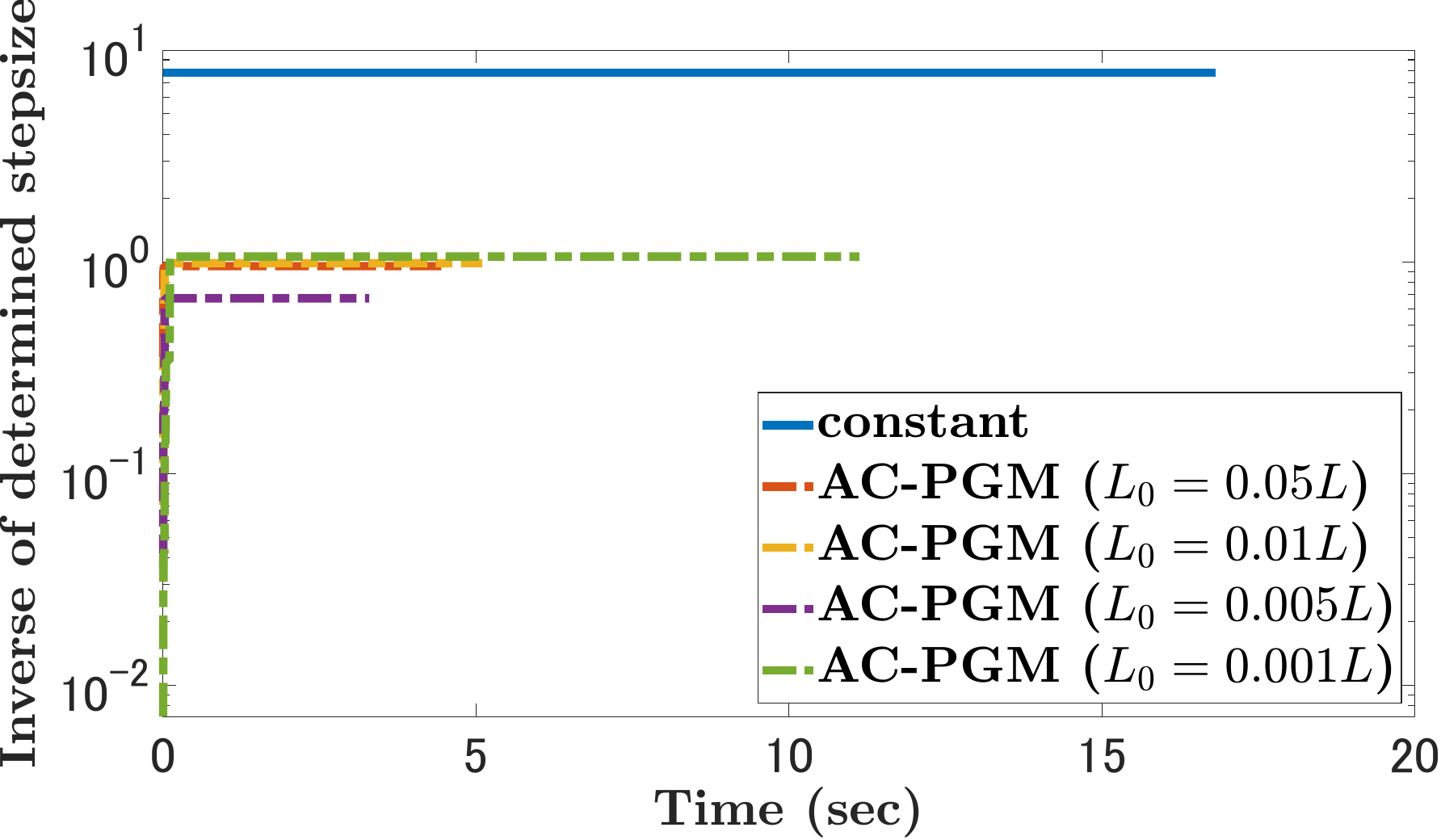}
  \end{minipage}

  \vspace{1em}
  (c) Madelon ($m=2000, n=500$)
  \vspace{1em}
  
  \begin{minipage}{0.45\columnwidth}
    \centering
    \includegraphics[width=\columnwidth]{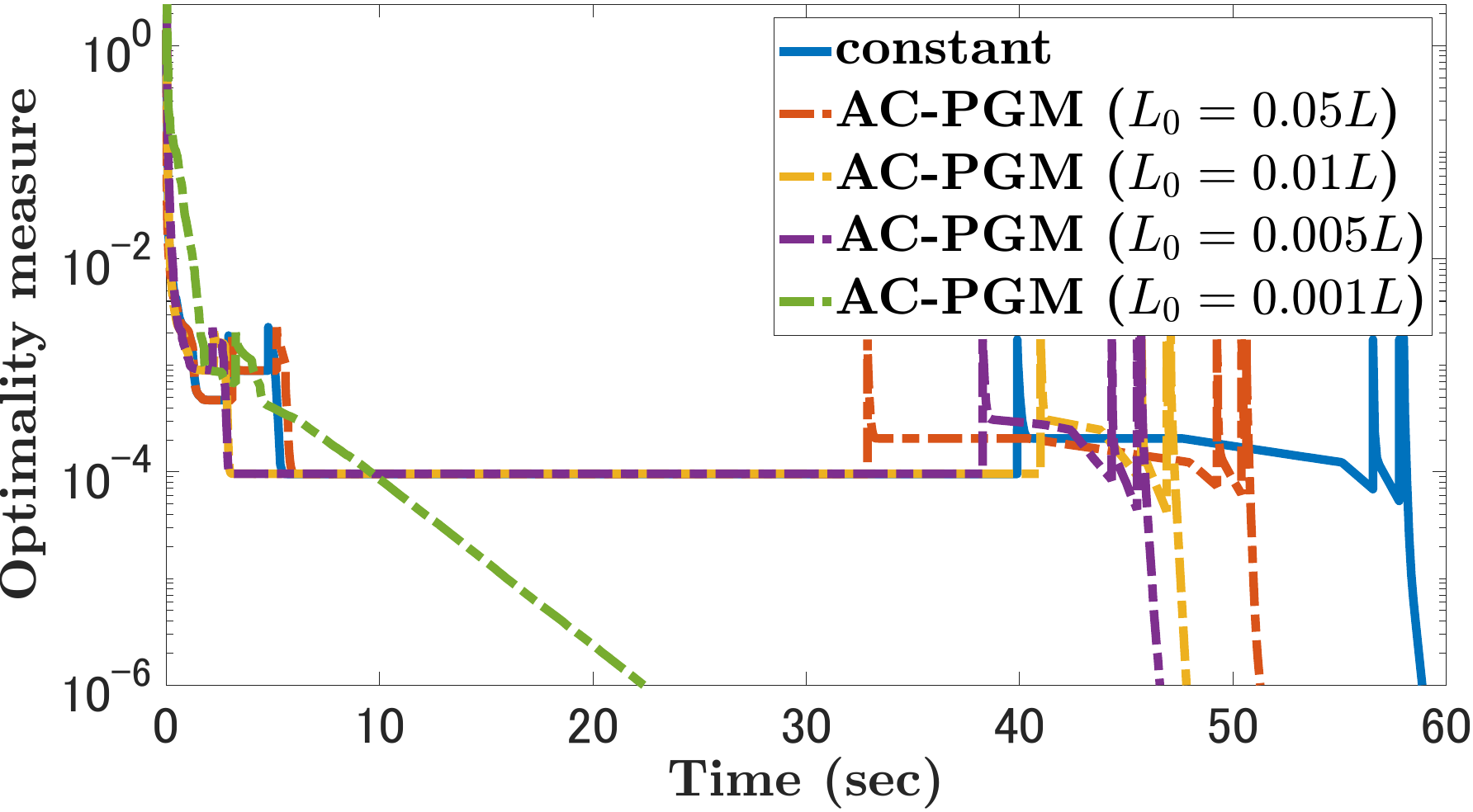}
  \end{minipage}
  \hspace{1em}
  \begin{minipage}{0.45\columnwidth}
    \centering
    \includegraphics[width=\columnwidth]{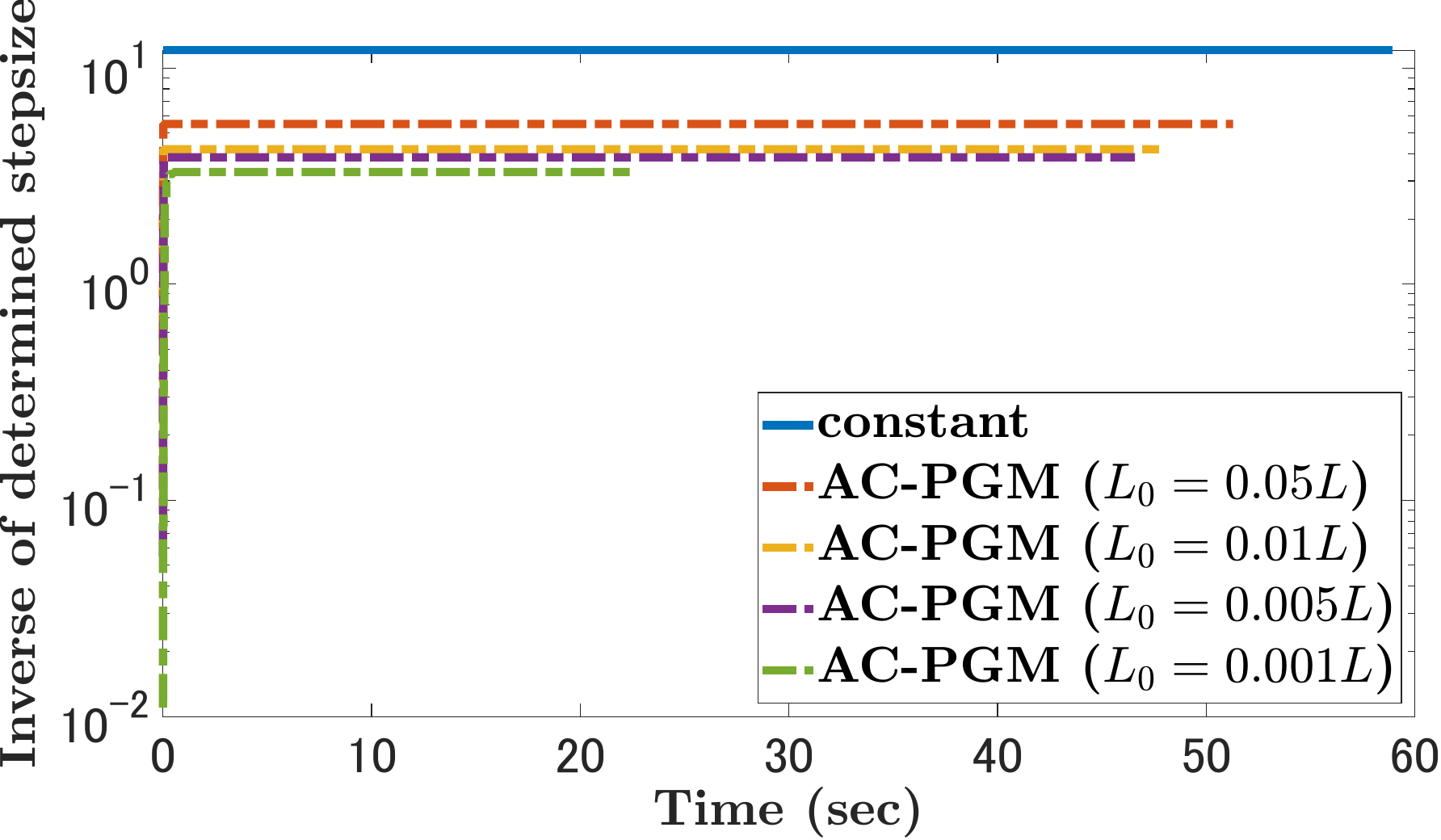}
  \end{minipage}

  \vspace{1em}
  (d) Mushrooms ($m=8124, n=111$)
  
  \label{fig:vs-constant}
  \caption{The convergence behaviors and the inverse of the determined stepsizes of each algorithm on the four datasets.}
\end{figure}

We use four datasets obtained from the LIBSVM\footnote{See \url{https://www.csie.ntu.edu.tw/~cjlin/libsvmtools/datasets/}.}.
In the problem setting, the parameters are chosen as $\lambda_1=10^{-2}/m,~ \lambda_2=10/m$, and $\kappa=10$.
As the upper curvature parameter of $f$ can be estimated in closed form as $L=\frac{\|A\|_{\mathrm{op}}^2}{4m}+\lambda_2$, where $\|A\|_{\mathrm{op}}$ is the operator norm of $A$, we employ $\gamma=1.1L$ as (the inverse of) the constant stepsize.
For the AC-PGM, we set $L_0=\theta L$ with $\theta\in\{0.05,0.01,0.005,0.001\}$ and $\alpha=1.1$.
For both algorithms, the initial point is set as the origin.

The convergence behavior of each algorithm on the four datasets is shown on the left side of Figure \ref{fig:vs-constant}, while the inverse of the stepsizes determined at each iteration are plotted on the right side.
Since the AC-PGM is less conservative than the proximal gradient method with the constant stepsize, it can adopt larger stepsizes and consequently achieves faster convergence.
The results on the Madelon dataset indicate that adopting a smaller $L_0$, i.e., a larger initial stepsize, is not necessarily effective. 
This is likely because choosing an excessively large stepsize leads to estimating the Lipschitz constant between two distant points, which in turn causes the stepsize to shrink in subsequent iterations.
The results on the Mushrooms dataset also show that when the stepsizes of the AC-PGM is close to the constant stepsize, the performance gap with the proximal gradient method with the constant stepsize becomes small.
From the figure \ref{fig:vs-constant}, the optimality measure of the AC-PGM appears to converge linearly; indeed, the KL exponent of the objective function of the problem is $1/2$ \citep[Corollary 5.1]{li2017calculus}, and hence Remark \ref{rem:KL} confirms that this is theoretically valid.

\subsection{Comparison with Armijo linesearch}
In the second experiment, we compare the AC-RGM with Riemannian gradient methods with Armijo-type linesearch.
The following optimization problem on the Stiefel manifold is considered:
\begin{equation}
    \underset{X\in\mathrm{St}(n,r)}{\mbox{minimize}} \quad \tr(X^\top AXN),
\end{equation}
where $A\in\RR^{n\times n}$ is a symmetric matrix and $N\in\RR^{r\times r}$ is a diagonal matrix with diagonal elements $r,r-1,\ldots,2,1$.
Here, we consider the standard inner product as the Riemann metric, namely, $\innerprod{Z_1}{Z_2}_X=\tr(Z_1^\top Z_2)$ for $X\in\mathrm{St}(n,r)$ and $Z_1,Z_2\in T_X\mathrm{St}(n,r)$.
As a retraction on the Stiefel manifold, we use the one based on the QR decomposition.
Specifically, for $X\in\mathrm{St}(n,r)$ and $Y\in T_X\mathrm{St}(n,r)$, the retraction returns the Q-factor of the QR decomposition of $X+Y$.
The computational cost of the QR decomposition of an $n\times r$ matrix is $\OO(nr^2)$.

Here, we employ two types of backtracking strategies.
The first one is the standard Armijo linesearch on Riemannian manifolds \cite{absil2008optimization}.
That is, with $s$ as the initial stepsize, the stepsize is determined by the smallest nonnegative integer $m$ such that
\begin{equation}\label{eq:Armijo}
    f(R_{X^{k-1}}(-st^m\grad f(X^{k-1})))-f(X^{k-1}) \le -\sigma st^m\|\grad f(X^{k-1})\|_{X^{k-1}}^2
\end{equation}
holds, where $\sigma,t\in(0,1)$.
Since the standard Armijo backtracking repeatedly computes the retraction, it can become a bottleneck when the retraction is computationally expensive, such as in the case of the QR decomposition.
To address this, \citet{sato2023modified} proposed a method to avoid retraction computations as much as possible during the backtracking.
The reduced Armijo method by \citet{sato2023modified} checks the condition \eqref{eq:Armijo} only when condition
\begin{equation}\label{eq:reduced-Armijo}
    f(X^{k-1}-st^m\grad f(X^{k-1}))-f(X^{k-1}) \le -\sigma st^m\|\grad f(X^{k-1})\|_{X^{k-1}}^2
\end{equation}
is satisfied.
This reduces the number of retraction computations.

For $(n,r)\in\{(25,5),(50,10),(75,15),(100,20)\}$, we conduct comparisons on the problem where $\tilde{A}\in\RR^{n\times n}$ is generated with entries independently following the standard normal distribution, and $A$ is set as $A=\tilde{A}+\tilde{A}^\top$.
The initial point $X^0\in\mathrm{St}(n,r)$ is randomly constructed by \texttt{stiefelfactory} in Manopt \citep{boumal2014manopt}.
To estimate the upper curvature parameter at the initial point, we use a matrix $Y\in\RR^{n\times r}$ whose elements are independently drawn from the standard normal distribution and set
\begin{equation}
    \tilde{L}\coloneqq\frac{2|f(R_{X^0}(Z))-f(X^0)-\tr(\grad f(X^0)^\top Z)|}{\|Z\|_{X^0}^2},
\end{equation}
where $Z$ is the projection of $Y$ onto $T_{X^0}\mathrm{St}(n,r)$.
We use $\sigma=10^{-4},~ t=1/2$, and $s=0.001\tilde{L}$ for the Riemannian gradient methods with Armijo linesearch.
For the AC-PGM, we set $L_0=\theta\tilde{L}$ with $\theta\in\{0.05,0.01,0.005,0.001\}$ and $\alpha=0.6$.
All algorithms are terminated once $\|\grad f(X^k)\|_{X^k}\le10^{-4}$ holds.

\begin{table}[htbp]
    \label{table:vs-Armijo}
    \caption{Computational result of the Riemannian gradient methods. Problem size $(n,r)$, dimension of the manifold (Dim.), computational time (Time), the number of iterations (\#Iter.), the number of retraction computations (\#Retr.) until algorithm termination.}
    \vspace{1em}
    \centering
    \begin{tabular}{cclrrr} \hline
        ($n$,$r$) & Dim. & Algorithm & Time (s) & \#Iter. & \#Retr. \\ \hline\hline
        \multirow{6}{*}{$(25,5)$} & \multirow{6}{*}{110} & Armijo &0.1513&536&8894 \\ \cline{3-6}
        && Reduced Armijo &0.1414&548&4349 \\ \cline{3-6}
        && AC-RGM ($L_0=0.05\hat{L}$) &0.1202&1183&1183 \\ \cline{3-6}
        && AC-RGM ($L_0=0.01\hat{L}$) &0.1150&1183&1183 \\ \cline{3-6}
        && AC-RGM ($L_0=0.005\hat{L}$) &0.1078&1085&1085 \\ \cline{3-6}
        && AC-RGM ($L_0=0.001\hat{L}$) &0.1197&1240&1240 \\ \hline
        \multirow{6}{*}{$(50,10)$} & \multirow{6}{*}{445} & Armijo &2.6362&4877&96497 \\ \cline{3-6}
        && Reduced Armijo &1.3852&6334&21310 \\ \cline{3-6}
        && AC-RGM ($L_0=0.05\hat{L}$) &0.7048&6060&6060 \\ \cline{3-6}
        && AC-RGM ($L_0=0.01\hat{L}$) &0.5673&5122&5122 \\ \cline{3-6}
        && AC-RGM ($L_0=0.005\hat{L}$) &1.0156&8553&8553 \\ \cline{3-6}
        && AC-RGM ($L_0=0.001\hat{L}$) &0.7770&6820&6820 \\ \hline
        \multirow{6}{*}{$(75,15)$} & \multirow{6}{*}{1005} & Armijo &7.9542&6567&131010 \\ \cline{3-6}
        && Reduced Armijo &2.2259&6567&13527 \\ \cline{3-6}
        && AC-RGM ($L_0=0.05\hat{L}$) &3.8431&23501&23501 \\ \cline{3-6}
        && AC-RGM ($L_0=0.01\hat{L}$) &3.2457&19961&19961 \\ \cline{3-6}
        && AC-RGM ($L_0=0.005\hat{L}$) &3.8343&23376&23376 \\ \cline{3-6}
        && AC-RGM ($L_0=0.001\hat{L}$) &3.1253&19018&19018 \\ \hline
        \multirow{6}{*}{$(100,20)$} & \multirow{6}{*}{1790} & Armijo &70.1871&33682&704083
 \\ \cline{3-6}
        && Reduced Armijo &19.9213&33682&72636 \\ \cline{3-6}
        && AC-RGM ($L_0=0.05\hat{L}$) &11.1566&48957&48957 \\ \cline{3-6}
        && AC-RGM ($L_0=0.01\hat{L}$) &6.6723&30059&30059 \\ \cline{3-6}
        && AC-RGM ($L_0=0.005\hat{L}$) &8.6976&39397&39397 \\ \cline{3-6}
        && AC-RGM ($L_0=0.001\hat{L}$) &8.0406&36467&36467 \\ \hline
    \end{tabular}
\end{table}

\begin{figure}[htbp]
    \centering
    \includegraphics[width=.5\linewidth]{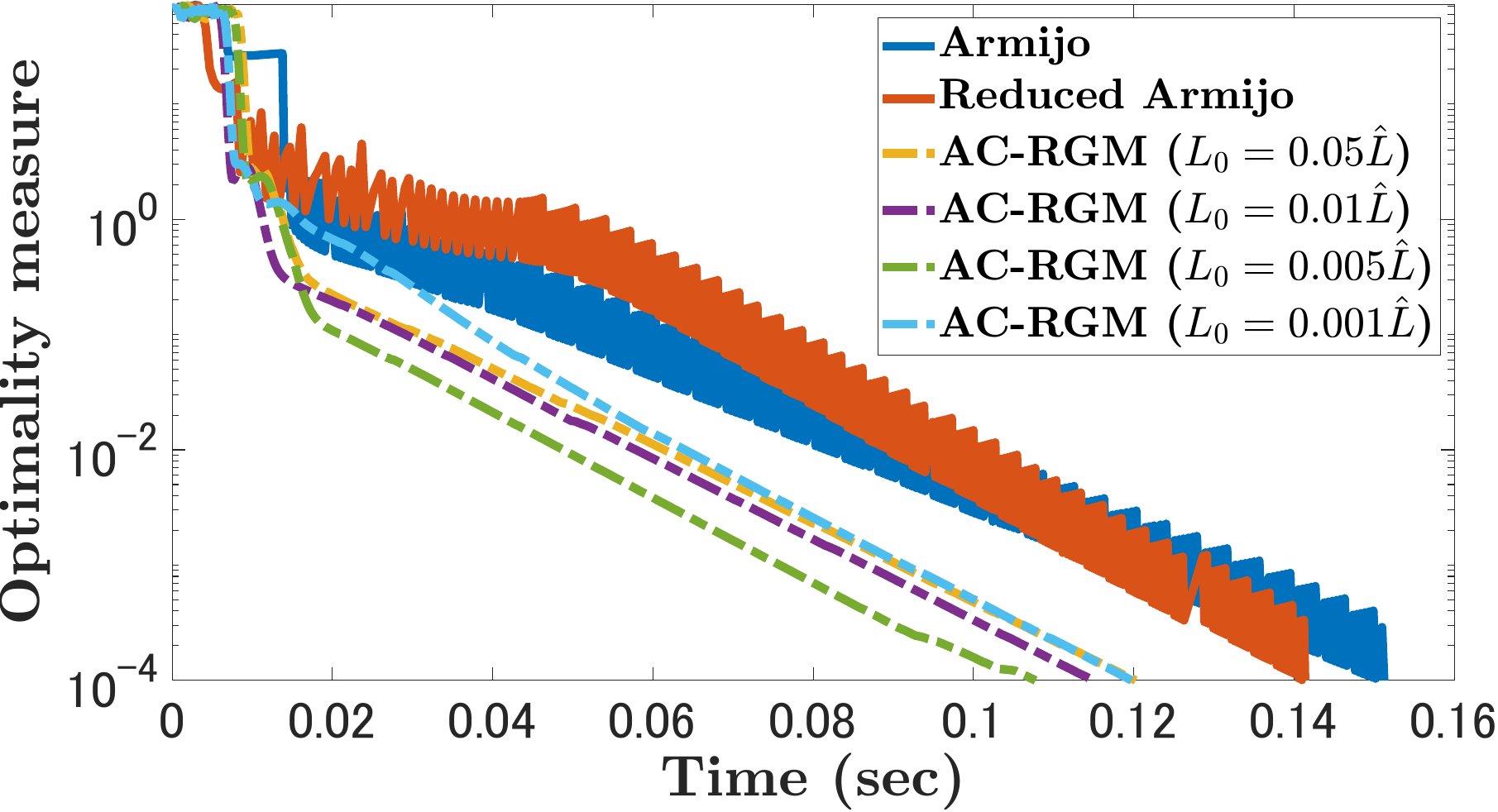} 
    \vspace{1em}
    \includegraphics[width=.5\linewidth]{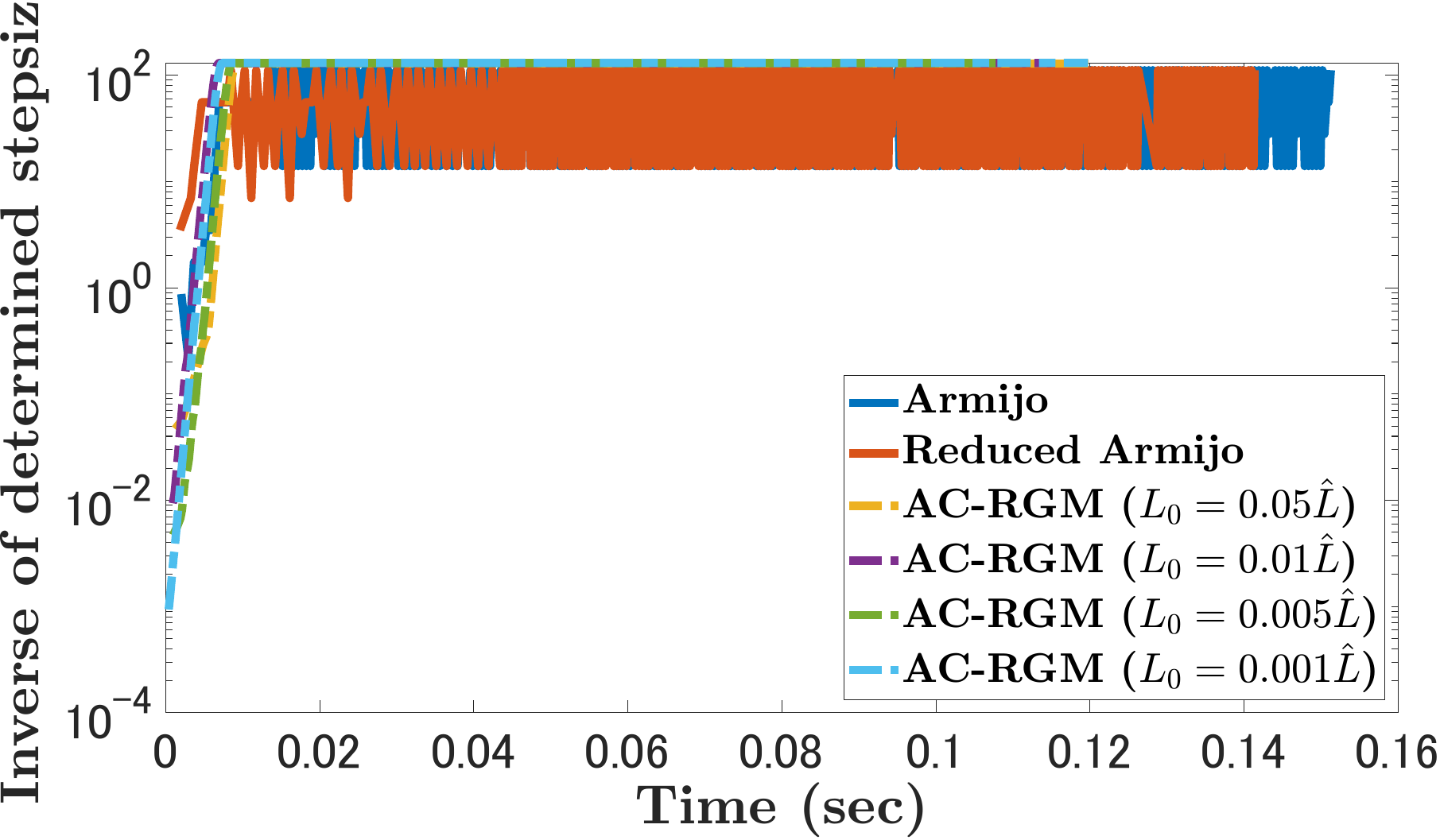} 
    \caption{The convergence behaviors and the inverse of the determined stepsizes of each algorithm for the case $(n,r)=(25,5)$.}
    \label{fig:n25r5}
\end{figure}

\begin{figure}[htbp]
    \centering
    \includegraphics[width=.5\linewidth]{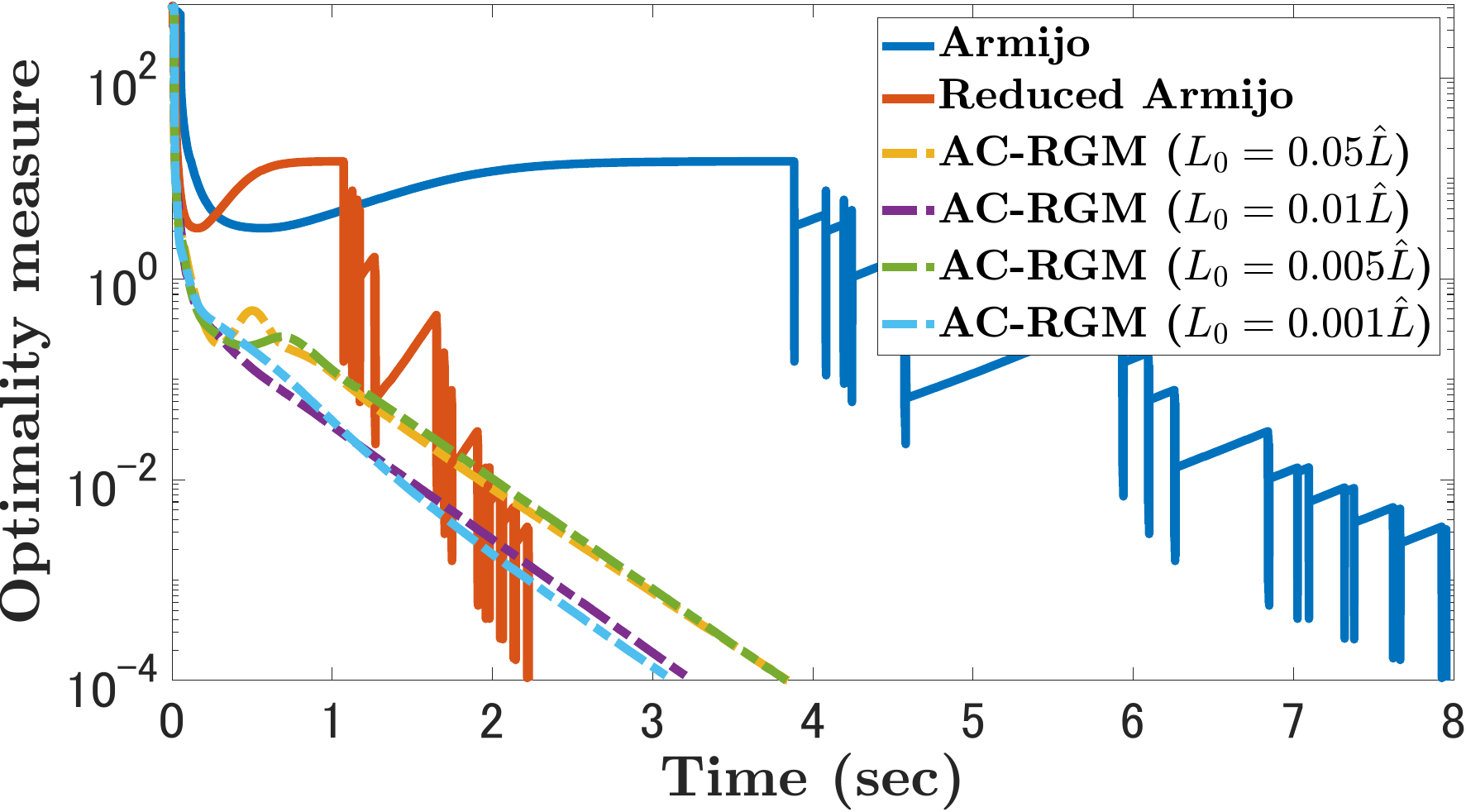} 
    \vspace{1em}
    \includegraphics[width=.5\linewidth]{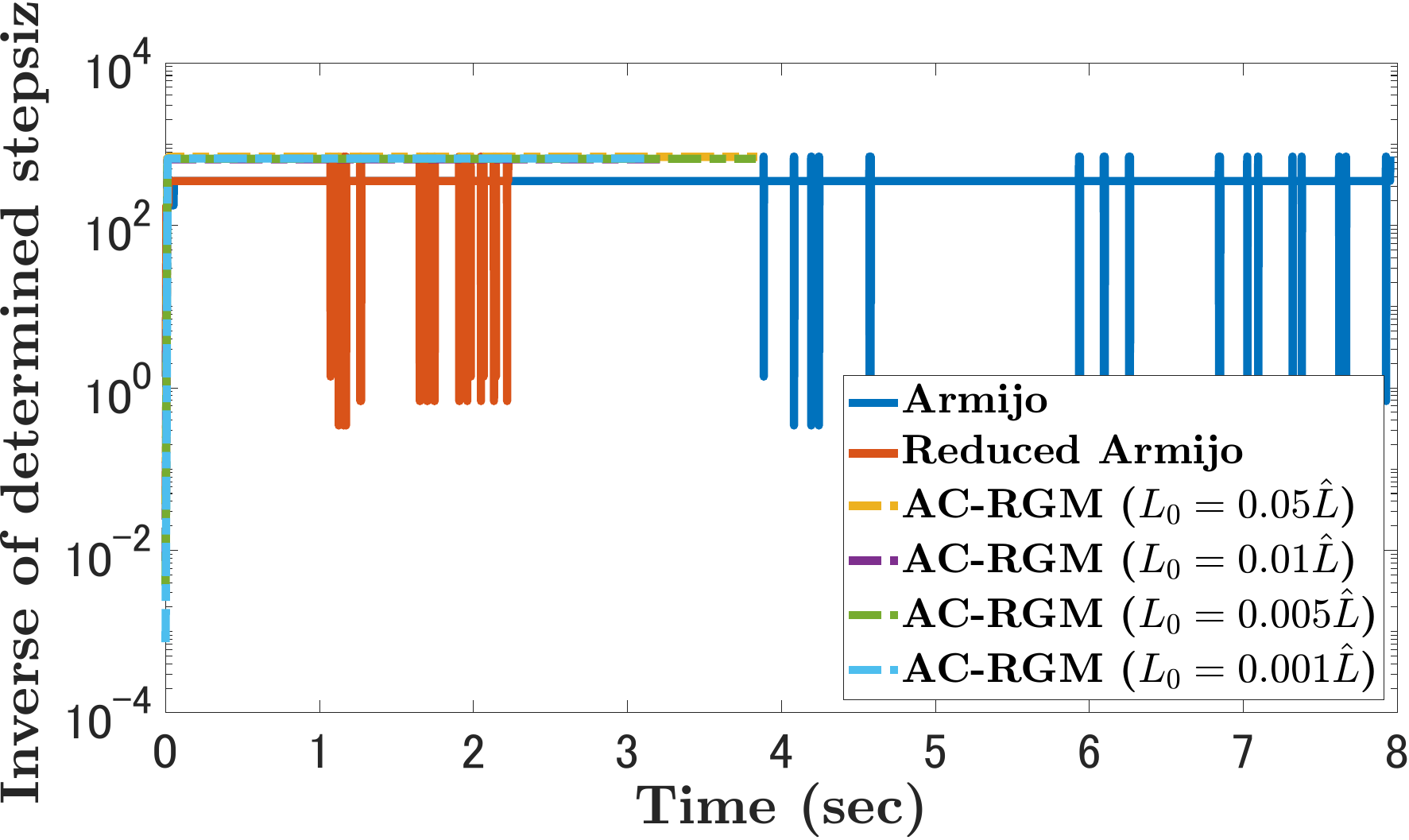} 
    \caption{The convergence behaviors and the inverse of the determined stepsizes of each algorithm for the case $(n,r)=(75,15)$.}
    \label{fig:n75r15}
\end{figure}

Table \ref{table:vs-Armijo} presents the time taken until algorithm termination, the number of iterations, and the number of retraction computations.
It can be observed that the number of retraction evaluations has a significant impact on the computational time.
Except for the case $(n,r)=(75,15)$, the AC-RGM achieves faster convergence than the linesearch-based methods because it does not use linesearch, thereby reducing the number of retraction computations.
We compare the convergence behaviors and the determined stepsizes between the cases where the AC-RGM is faster (Figure \ref{fig:n25r5}) and where it is not (Figure \ref{fig:n75r15}).
From Figures \ref{fig:n25r5} and \ref{fig:n75r15}, it can be seen that the reduced Armijo method outperforms the AC-RGM when it successfully adopts larger stepsizes than the AC-RGM.

\section{Concluding Remarks}\label{sec:conclusion}
In this paper, we first present a proximal gradient method for nonconvex optimization based on the auto-conditioned stepsize strategy proposed by \citet{lan2024projected}.
A simple convergence analysis is conducted.
We also provide a convergence analysis in the presence of the KL property, adaptivity to the weak smoothness, and the extension to the Bregman proximal gradient method.
Furthermore, auto-conditioned conditional gradient and Riemannian gradient methods are also proposed, demonstrating the generality of the auto-conditioned stepsize strategy.

One limitation of this work is that, although our method is parameter-free and linesearch-free, it is not ``adaptive''.
That is, our algorithm is not adaptive to local curvature because it imposes monotonicity on the stepsizes.
As can be seen from Figure \ref{fig:n75r15}, allowing adaptive stepsize selection could further enhance practical performance.
Such a limitation of adaptivity is also shared by existing auto-conditioned methods for nonconvex optimization \cite{lan2024projected,Hoai2024nonconvex,Hoai2025proximal}.
Therefore, developing adaptive linesearch-free methods for nonconvex optimization would be a next challenge.

\section*{Acknowledgments}
Shotaro Yagishita is supported in part by JSPS KAKENHI Grant 25K21158.
Masaru Ito is supported in part by JSPS KAKENHI Grant 25K15010.




\bibliography{reference.bib}

\begin{thebibliography}{58}
\providecommand{\natexlab}[1]{#1}
\providecommand{\url}[1]{\texttt{#1}}
\expandafter\ifx\csname urlstyle\endcsname\relax
  \providecommand{\doi}[1]{doi: #1}\else
  \providecommand{\doi}{doi: \begingroup \urlstyle{rm}\Url}\fi

\bibitem[Absil et~al.(2008)Absil, Mahony, and Sepulchre]{absil2008optimization}
P-A Absil, Robert Mahony, and Rodolphe Sepulchre.
\newblock \emph{Optimization Algorithms on Matrix Manifolds}.
\newblock Princeton University Press, 2008.

\bibitem[Ansari-\"{O}nnestam and Malitsky(2025)]{ansarionnestam2025riemannian}
Aban Ansari-\"{O}nnestam and Yura Malitsky.
\newblock Adaptive gradient descent on {R}iemannian manifolds with nonnegative curvature.
\newblock \emph{arXiv preprint arXiv:2504.16724}, 2025.

\bibitem[Attouch and Bolte(2009)]{attouch2009convergence}
Hedy Attouch and J{\'e}r{\^o}me Bolte.
\newblock On the convergence of the proximal algorithm for nonsmooth functions involving analytic features.
\newblock \emph{Mathematical Programming}, 116\penalty0 (1):\penalty0 5--16, 2009.

\bibitem[Attouch et~al.(2010)Attouch, Bolte, Redont, and Soubeyran]{attouch2010proximal}
H{\'e}dy Attouch, J{\'e}r{\^o}me Bolte, Patrick Redont, and Antoine Soubeyran.
\newblock Proximal alternating minimization and projection methods for nonconvex problems: An approach based on the {K}urdyka-{{\L}}ojasiewicz inequality.
\newblock \emph{Mathematics of operations research}, 35\penalty0 (2):\penalty0 438--457, 2010.

\bibitem[Attouch et~al.(2013)Attouch, Bolte, and Svaiter]{attouch2013convergence}
Hedy Attouch, J{\'e}r{\^o}me Bolte, and Benar~Fux Svaiter.
\newblock Convergence of descent methods for semi-algebraic and tame problems: proximal algorithms, forward--backward splitting, and regularized {Gauss--Seidel} methods.
\newblock \emph{Mathematical programming}, 137\penalty0 (1):\penalty0 91--129, 2013.

\bibitem[Bauschke et~al.(2017)Bauschke, Bolte, and Teboulle]{bauschke2017descent}
Heinz~H Bauschke, J{\'e}r{\^o}me Bolte, and Marc Teboulle.
\newblock A descent lemma beyond {L}ipschitz gradient continuity: first-order methods revisited and applications.
\newblock \emph{Mathematics of Operations Research}, 42\penalty0 (2):\penalty0 330--348, 2017.

\bibitem[Beck(2017)]{beck2017first}
Amir Beck.
\newblock \emph{First-order Methods in Optimization}.
\newblock SIAM, 2017.

\bibitem[Bento et~al.(2025)Bento, Mordukhovich, Mota, and Nesterov]{bento2025convergence}
Glaydston Bento, Boris Mordukhovich, Tiago Mota, and Yurii Nesterov.
\newblock Convergence of descent optimization algorithms under {P}olyak-{{\L}}ojasiewicz-{K}urdyka conditions.
\newblock \emph{Journal of Optimization Theory and Applications}, 207\penalty0 (3):\penalty0 41, 2025.

\bibitem[Bertsekas(2016)]{bertsekas2016nonlinear}
Dimitri~P Bertsekas.
\newblock \emph{Nonlinear Programming}.
\newblock Athena scientific, 3rd edition, 2016.

\bibitem[Bolte et~al.(2007)Bolte, Daniilidis, and Lewis]{bolte2007lojasiewicz}
J\'{e}r\^{o}me Bolte, Aris Daniilidis, and Adrian Lewis.
\newblock The {{\L}}ojasiewicz inequality for nonsmooth subanalytic functions with applications to subgradient dynamical systems.
\newblock \emph{SIAM Journal on Optimization}, 17\penalty0 (4):\penalty0 1205--1223, 2007.

\bibitem[Bolte et~al.(2014)Bolte, Sabach, and Teboulle]{bolte2014proximal}
J{\'e}r{\^o}me Bolte, Shoham Sabach, and Marc Teboulle.
\newblock Proximal alternating linearized minimization for nonconvex and nonsmooth problems.
\newblock \emph{Mathematical Programming}, 146\penalty0 (1):\penalty0 459--494, 2014.

\bibitem[Boumal(2023)]{boumal2023introduction}
Nicolas Boumal.
\newblock \emph{An Introduction to Optimization on Smooth Manifolds}.
\newblock Cambridge University Press, 2023.

\bibitem[Boumal et~al.(2014)Boumal, Mishra, Absil, and Sepulchre]{boumal2014manopt}
Nicolas Boumal, Bamdev Mishra, P-A Absil, and Rodolphe Sepulchre.
\newblock Manopt, a matlab toolbox for optimization on manifolds.
\newblock \emph{The Journal of Machine Learning Research}, 15\penalty0 (1):\penalty0 1455--1459, 2014.

\bibitem[Boumal et~al.(2019)Boumal, Absil, and Cartis]{boumal2019global}
Nicolas Boumal, Pierre-Antoine Absil, and Coralia Cartis.
\newblock Global rates of convergence for nonconvex optimization on manifolds.
\newblock \emph{IMA Journal of Numerical Analysis}, 39\penalty0 (1):\penalty0 1--33, 2019.

\bibitem[Braun et~al.(2022)Braun, Carderera, Combettes, Hassani, Karbasi, Mokhtari, and Pokutta]{braun2023conditional}
G\'abor Braun, Alejandro Carderera, Cyrille~W Combettes, Hamed Hassani, Amin Karbasi, Aryan Mokhtari, and Sebastian Pokutta.
\newblock Conditional gradient methods.
\newblock \emph{arXiv preprint arXiv:2211.14103}, 2022.

\bibitem[Braun et~al.(2025)Braun, Carderera, Combettes, Hassani, Karbasi, Mokhtari, and Pokutta]{braun2022conditional}
G\'abor Braun, Alejandro Carderera, Cyrille~W. Combettes, Hamed Hassani, Amin Karbasi, Aryan Mokhtari, and Sebastian Pokutta.
\newblock Conditional gradient methods.
\newblock \emph{arXiv preprint arXiv:2211.14103v5}, 2025.

\bibitem[Bredies et~al.(2009)Bredies, Lorenz, and Maass]{bredies2009generalized}
Kristian Bredies, Dirk~A Lorenz, and Peter Maass.
\newblock A generalized conditional gradient method and its connection to an iterative shrinkage method.
\newblock \emph{Computational Optimization and Applications}, 42:\penalty0 173--193, 2009.

\bibitem[Carmon et~al.(2020)Carmon, Duchi, Hinder, and Sidford]{carmon2020lower}
Yair Carmon, John~C. Duchi, Oliver Hinder, and Aaron Sidford.
\newblock Lower bounds for finding stationary points {I}.
\newblock \emph{Mathematical Programming}, 184:\penalty0 71--120, 2020.

\bibitem[Cartis et~al.(2017)Cartis, Gould, and Toint]{cartis17regularization}
Coralia Cartis, Nicholas~I. Gould, and Philippe~L. Toint.
\newblock Worst-case evaluation complexity of regularization methods for smooth unconstrained optimization using {H}\"older continuous gradients.
\newblock \emph{Optimization Methods and Software}, 3:\penalty0 1273--1298, 2017.

\bibitem[Dvurechensky(2017)]{dvurechensky17inexact}
Pavel Dvurechensky.
\newblock Gradient method with inexact oracle for composite non-convex optimization.
\newblock \emph{arXiv preprint arXiv:1703.09180}, 2017.

\bibitem[Frank and Wolfe(1956)]{frank1956algorithm}
Marguerite Frank and Philip Wolfe.
\newblock An algorithm for quadratic programming.
\newblock \emph{Naval research logistics quarterly}, 3\penalty0 (1-2):\penalty0 95--110, 1956.

\bibitem[Frankel et~al.(2015)Frankel, Garrigos, and Peypouquet]{frankel2015splitting}
Pierre Frankel, Guillaume Garrigos, and Juan Peypouquet.
\newblock Splitting methods with variable metric for {K}urdyka--{{\L}}ojasiewicz functions and general convergence rates.
\newblock \emph{Journal of Optimization Theory and Applications}, 165\penalty0 (3):\penalty0 874--900, 2015.

\bibitem[Fukushima and Mine(1981)]{fukushima1981generalized}
Masao Fukushima and Hisashi Mine.
\newblock A generalized proximal point algorithm for certain non-convex minimization problems.
\newblock \emph{International Journal of Systems Science}, 12\penalty0 (8):\penalty0 989--1000, 1981.

\bibitem[Hoai and Thai(2025)]{Hoai2025proximal}
Pham~Thi Hoai and Nguyen Pham~Duy Thai.
\newblock Composite optimization models via proximal gradient method with a novel enhanced adaptive stepsize.
\newblock \emph{preprint in Optimization Online}, 2025.
\newblock URL \url{https://optimization-online.org/?p=26741}.

\bibitem[Hoai et~al.(2024)Hoai, Vinh, and Chung]{Hoai2024nonconvex}
Pham~Thi Hoai, Nguyen~The Vinh, and Nguyen Phung~Hai Chung.
\newblock A novel stepsize for gradient descent method.
\newblock \emph{Operations Research Letters}, 53:\penalty0 107072, 2024.

\bibitem[Huang and Wei(2022)]{huang2022riemannian}
Wen Huang and Ke~Wei.
\newblock Riemannian proximal gradient methods.
\newblock \emph{Mathematical Programming}, 194\penalty0 (1):\penalty0 371--413, 2022.

\bibitem[Huang and Wei(2023)]{huang2023inexact}
Wen Huang and Ke~Wei.
\newblock An inexact {R}iemannian proximal gradient method.
\newblock \emph{Computational Optimization and Applications}, 85\penalty0 (1):\penalty0 1--32, 2023.

\bibitem[Huang et~al.(2015)Huang, Liu, Shi, Van~Huffel, and Suykens]{huang2015two}
Xiaolin Huang, Yipeng Liu, Lei Shi, Sabine Van~Huffel, and Johan~AK Suykens.
\newblock Two-level $\ell_1$ minimization for compressed sensing.
\newblock \emph{Signal Processing}, 108:\penalty0 459--475, 2015.

\bibitem[Jia et~al.(2023)Jia, Kanzow, and Mehlitz]{jia2023convergence}
Xiaoxi Jia, Christian Kanzow, and Patrick Mehlitz.
\newblock Convergence analysis of the proximal gradient method in the presence of the {K}urdyka--{{\L}}ojasiewicz property without global {L}ipschitz assumptions.
\newblock \emph{SIAM Journal on Optimization}, 33\penalty0 (4):\penalty0 3038--3056, 2023.

\bibitem[Lacoste-Julien(2016)]{lacostejulien2016}
Simon Lacoste-Julien.
\newblock Convergence rate of {F}rank--{W}olfe for non-convex objectives.
\newblock \emph{arXiv preprint arXiv:1607.00345}, 2016.

\bibitem[Lan et~al.(2024)Lan, Li, and Xu]{lan2024projected}
Guanghui Lan, Tianjiao Li, and Yangyang Xu.
\newblock Projected gradient methods for nonconvex and stochastic optimization: new complexities and auto-conditioned stepsizes.
\newblock \emph{arXiv preprint arXiv:2412.14291}, 2024.

\bibitem[Latafat et~al.(2024)Latafat, Themelis, Stella, and Patrinos]{latafat2024proximal}
Puya Latafat, Andreas Themelis, Lorenzo Stella, and Panagiotis Patrinos.
\newblock Adaptive proximal algorithms for convex optimization under local {L}ipschitz continuity of the gradient.
\newblock \emph{Mathematical Programming}, 2024.

\bibitem[Levitin and Polyak(1966)]{levitin1966constrained}
Evgeny~S. Levitin and Boris~T. Polyak.
\newblock Constrained minimization methods.
\newblock \emph{USSR Computational Mathematics and Mathematical Physics}, 6\penalty0 (5):\penalty0 1--50, 1966.

\bibitem[Li and Pong(2017)]{li2017calculus}
Guoyin Li and Ting~Kei Pong.
\newblock Calculus of the exponent of {K}urdyka-{{\L}}ojasiewicz inequality and its applications to linear convergence of first-order methods.
\newblock \emph{Foundations of Computational Mathematics}, 18:\penalty0 1199--1232, 2017.

\bibitem[Li and Lan(2025)]{li2025simple}
Tianjiao Li and Guanghui Lan.
\newblock A simple uniformly optimal method without line search for convex optimization.
\newblock \emph{Mathematical Programming}, pages 1--38, 2025.

\bibitem[Lions and Mercier(1979)]{lions1979splitting}
Pierre-Louis Lions and Bertrand Mercier.
\newblock Splitting algorithms for the sum of two nonlinear operators.
\newblock \emph{SIAM Journal on Numerical Analysis}, 16\penalty0 (6):\penalty0 964--979, 1979.

\bibitem[Lu et~al.(2018)Lu, Freund, and Nesterov]{lu2018relatively}
Haihao Lu, Robert~M Freund, and Yurii Nesterov.
\newblock Relatively smooth convex optimization by first-order methods, and applications.
\newblock \emph{SIAM Journal on Optimization}, 28\penalty0 (1):\penalty0 333--354, 2018.

\bibitem[Luo et~al.(2013)Luo, Wang, and Zhang]{luo2013new}
Ziyan Luo, Yingnan Wang, and Xianglilan Zhang.
\newblock New improved penalty methods for sparse reconstruction based on difference of two norms.
\newblock \emph{Technical report}, 2013.
\newblock \doi{10.13140/RG.2.1.3256.3369}.

\bibitem[Malitsky(2015)]{malitsky2015variational}
Yu. Malitsky.
\newblock Projected reflected gradient methods for monotone variational inequalities.
\newblock \emph{SIAM Journal on Optimization}, 25\penalty0 (1):\penalty0 502--520, 2015.

\bibitem[Malitsky(2020)]{malitsky2020golden}
Yura Malitsky.
\newblock Golden ratio algorithms for variational inequalities.
\newblock \emph{Mathematical Programming}, 184:\penalty0 383--410, 2020.

\bibitem[Malitsky and Mishchenko(2020)]{malitsky2020gradient}
Yura Malitsky and Konstantin Mishchenko.
\newblock Adaptive gradient descent without descent.
\newblock In \emph{Proceedings of the 37th International Conference on Machine Learning}, volume 119, pages 6702--6712, 2020.

\bibitem[Malitsky and Mishchenko(2024)]{malitsky2024adaptive}
Yura Malitsky and Konstantin Mishchenko.
\newblock Adaptive proximal gradient method for convex optimization.
\newblock In \emph{Advances in Neural Information Processing Systems}, volume~37, pages 100670--100697, 2024.

\bibitem[Mine and Fukushima(1981)]{mine1981minimization}
Hisashi Mine and Masao Fukushima.
\newblock A minimization method for the sum of a convex function and a continuously differentiable function.
\newblock \emph{Journal of Optimization Theory and Applications}, 33:\penalty0 9--23, 1981.

\bibitem[Nesterov(2018)]{nesterov2018lectures}
Yurii Nesterov.
\newblock \emph{Lectures on Convex Optimization}, volume 137.
\newblock Springer, 2018.

\bibitem[Oikonomidis et~al.(2024)Oikonomidis, Laude, Latafat, Themelis, and Patrinos]{oikonomidis24adaptive}
Konstantinos Oikonomidis, Emanuel Laude, Puya Latafat, Andreas Themelis, and Panagiotis Patrinos.
\newblock Adaptive proximal gradient methods are universal without approximation.
\newblock In \emph{Proceedings of the 41st International Conference on Machine Learning}, volume 235, pages 38663--38682, 2024.

\bibitem[Ou et~al.(2025)Ou, Latafat, and Themelis]{ou2025bregman}
Hongjia Ou, Puya Latafat, and Andreas Themelis.
\newblock Linesearch-free adaptive {B}regman proximal gradient for convex minimization without relative smoothness.
\newblock \emph{arXiv preprint arXiv:2508.01353}, 2025.

\bibitem[Parikh and Boyd(2014)]{parikh2014proximal}
Neal Parikh and Stephen Boyd.
\newblock Proximal algorithms.
\newblock \emph{Foundations and Trends$^\text{\textregistered}$ in Optimization}, 1\penalty0 (3):\penalty0 127--239, 2014.

\bibitem[Passty(1979)]{passty1979ergodic}
Gregory~B Passty.
\newblock Ergodic convergence to a zero of the sum of monotone operators in {Hilbert} space.
\newblock \emph{Journal of Mathematical Analysis and Applications}, 72\penalty0 (2):\penalty0 383--390, 1979.

\bibitem[Qian and Pan(2025)]{qian2025superlinear}
Yitian Qian and Shaohua Pan.
\newblock A superlinear convergence framework for {K}urdyka--{{\L}}ojasiewicz optimization.
\newblock \emph{Optimization Letters}, 2025.
\newblock \doi{10.1007/s11590-025-02227-z}.
\newblock Online First.

\bibitem[Rockafellar and Wets(2009)]{rockafellar2009variational}
R~Tyrrell Rockafellar and Roger J-B Wets.
\newblock \emph{Variational Analysis}, volume 317.
\newblock Springer Science \& Business Media, 2009.

\bibitem[Sato(2021)]{sato2021riemannian}
Hiroyuki Sato.
\newblock \emph{Riemannian Optimization and Its Applications}, volume 670.
\newblock Springer, 2021.

\bibitem[Sato et~al.(2023)Sato, Yamakawa, and Aihara]{sato2023modified}
Hiroyuki Sato, Yuya Yamakawa, and Kensuke Aihara.
\newblock Modified {Armijo} line-search in {Riemannian} optimization with reduced computational cost.
\newblock \emph{arXiv preprint arXiv:2304.02197}, 2023.

\bibitem[Shiota(1997)]{shiota1997geometry}
Masahiro Shiota.
\newblock \emph{Geometry of Subanalytic and Semialgebraic Sets}.
\newblock Springer Science \& Business Media, 1997.

\bibitem[Van~Nguyen(2017)]{van2017forward}
Quang Van~Nguyen.
\newblock Forward-backward splitting with {B}regman distances.
\newblock \emph{Vietnam Journal of Mathematics}, 45\penalty0 (3):\penalty0 519--539, 2017.

\bibitem[Yagishita(2025)]{yagishita2025convergence}
Shotaro Yagishita.
\newblock Convergence of linesearch-based generalized conditional gradient methods without smoothness assumptions.
\newblock \emph{arXiv preprint arXiv:2505.01092}, 2025.

\bibitem[Yagishita and Gotoh(2022)]{yagishita2022exact}
Shotaro Yagishita and Jun-ya Gotoh.
\newblock Exact penalization at d-stationary points of cardinality-or rank-constrained problem.
\newblock \emph{arXiv preprint arXiv:2209.02315}, 2022.

\bibitem[Yagishita and Ito(2025)]{yagishita2025proximal}
Shotaro Yagishita and Masaru Ito.
\newblock Proximal gradient-type method with generalized distance and convergence analysis without global descent lemma.
\newblock \emph{arXiv preprint arXiv:2505.00381}, 2025.

\bibitem[Yashtini(2016)]{yashtini2016holder}
Maryam Yashtini.
\newblock On the global convergence rate of the gradient descent method for functions with {H}\"older continuous gradients.
\newblock \emph{Optimization Letters}, 10:\penalty0 1361--1370, 2016.

\end{thebibliography}
\bibliographystyle{plainnat}

\end{document}